\theoremstyle{plain}
\newtheorem{theorem}{Theorem}[section]
\newtheorem{proposition}[theorem]{Proposition}
\newtheorem{lemma}[theorem]{Lemma}
\newtheorem{corollary}[theorem]{Corollary}
\newtheorem{conjecture}[theorem]{Conjecture}
\theoremstyle{definition}
\newtheorem{definition}[theorem]{Definition}
\theoremstyle{remark}
\newtheorem{remark}[theorem]{Remark}
\newtheorem{example}[theorem]{Example}
\newcommand{\Z}{\mathbb{Z}}
\newcommand{\CS}{\mathrm{CS}}
\newcommand{\gr}{\operatorname{gr}}
\newcommand{\pt}{{*}}
\author[Aliakbar Daemi]{Aliakbar Daemi}
\thanks{AD was partially supported by NSF Grant DMS-2208181 and NSF FRG Grant DMS-1952762.}
\address{Department of Mathematics and Statistics, Washington University in St. Louis}
\email{adaemi@wustl.edu}
\author[Tye Lidman]{Tye Lidman}
\thanks{TL was partially supported by NSF grant DMS-2105469.}
\address{Department of Mathematics, North Carolina State University, Raleigh, NC 27607}
\email{tlid@math.ncsu.edu}
\author[Mike Miller Eismeier]{Mike Miller Eismeier}
\address{Department of Mathematics, University of Vermont, Burlington, VT 05405}
\email{Mike.Miller-Eismeier@uvm.edu}
\title{Filtered instanton homology and cosmetic surgery}
\begin{document}
\maketitle

\begin{abstract}
The cosmetic surgery conjecture predicts that for a non-trivial knot in the three-sphere, performing two different Dehn surgeries results in distinct oriented three-manifolds.  Hanselman reduced the problem to $\pm 2$ or $\pm 1/n$-surgeries being the only possible cosmetic surgeries.  We remove the case of $\pm 1/n$-surgeries using the Chern--Simons filtration on Floer's original irreducible-only instanton homology, reducing the conjecture to the case of $\pm 2$-surgery on genus $2$ knots with trivial Alexander polynomial.  
We also prove some similar results for surgeries on knots in $S^2 \times S^1$. As key steps in establishing these results, we define invariants of the homeomorphism type of 3-manifolds derived from filtered instanton Floer homology and introduce a new surgery relationship for Floer’s instanton homology.
\end{abstract}

\section{Introduction}\label{sec:intro}

\subsection{Cosmetic surgeries}
For a knot $K$ in $S^3$, let the result of $p/q$-surgery be denoted $S^3_{p/q}(K)$.  If $U$ is the unknot, then $S^3_{p/q}(U) = L(p,-q)$ and hence there are infinitely many different surgeries on $U$ that can produce orientation-preserving diffeomorphic three-manifolds. The {\em cosmetic surgery conjecture} \cite[Conjecture 6.1]{Gordon} (see also \cite[Problem 1.81A]{Kirby}) predicts that the unknot is very special in this regard: 
\begin{conjecture}
Let $K$ be a non-trivial knot in $S^3$.  If $p/q \neq p'/q'$, then $S^3_{p/q}(K)$ and $S^3_{p'/q'}(K)$ are not orientation-preserving diffeomorphic.    
\end{conjecture}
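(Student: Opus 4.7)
The plan is to combine a chain of previously established reductions with new Floer-theoretic techniques, and then honestly record what remains. As a first step, I would invoke the Heegaard Floer obstructions to cosmetic surgeries due to Ozsv\'ath--Szab\'o and their strengthenings by Wu and Ni--Wu. Together these force any cosmetic pair $S^3_{p/q}(K) \cong S^3_{p'/q'}(K)$ with distinct slopes to satisfy $q' = -q$ and to constrain the Alexander polynomial of $K$ very tightly. I would then apply Hanselman's immersed-curve refinement of bordered Floer homology, which, as noted in the abstract, reduces the possible cosmetic slopes to $p/q \in \{\pm 2\} \cup \{\pm 1/n : n \geq 1\}$ and bounds the Seifert genus of $K$ by a small constant depending on the slope.

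To eliminate the $\pm 1/n$ case, I would exploit the fact that $S^3_{1/n}(K)$ is an integer homology sphere, so Floer's original irreducible-only instanton homology is defined. The ungraded (or $\Z/8$-graded) instanton homology is too coarse to separate $S^3_{1/n}(K)$ from $-S^3_{1/n}(K)$ in general, so I would refine it by the Chern--Simons filtration, packaging an $\R$-filtered invariant of the oriented homeomorphism type. The plan is then to establish the new surgery relationship advertised in the abstract, giving control over how these filtered invariants evolve with $n$, and then to show by an asymptotic analysis in $n$, comparing filtration levels of generators under orientation reversal, that no non-trivial knot can satisfy the filtered equalities forced by a cosmetic pair.

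The main obstacle, and the place where this proposal must stop short of the full conjecture, is the remaining case of $\pm 2$-surgery on a genus-$2$ knot with trivial Alexander polynomial. Here the Alexander polynomial vanishes, so Heegaard Floer concordance invariants give no leverage; and, unlike the $\pm 1/n$ family, no infinite tower of surgeries is available against which to run an asymptotic comparison of filtered instanton invariants. I would therefore flag this as the residual case rather than claim to resolve it; in that sense, this plan realizes the reduction stated in the abstract, and a complete proof of the conjecture would require genuinely new input, perhaps from sutured or equivariant refinements of instanton homology, or a delicate case-by-case analysis of the very restricted family of candidate knots that survive the reduction.
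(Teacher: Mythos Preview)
The statement you are addressing is a \emph{conjecture}; the paper does not prove it, and indeed explicitly leaves open the $\pm 2$ case (Corollary~\ref{cor:rem-case}). Your proposal is honest about this and, at the level of overall strategy, matches the paper's reduction: Heegaard Floer input (Ni--Wu, Hanselman) to get down to $\pm 2$ or $\pm 1/n$, then filtered irreducible instanton homology to kill $\pm 1/n$, with the $\pm 2$ case flagged as residual.

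Where your sketch diverges from the paper is in the mechanism for the $\pm 1/n$ step. You describe an ``asymptotic analysis in $n$'' and a comparison of ``filtration levels of generators under orientation reversal.'' Neither is what the paper does. There is no asymptotic argument: the paper defines a single numerical invariant $\ell(Y)$ from the Chern--Simons filtration (roughly the minimum of $\CS - \gr/8$ over homology generators) and proves, for each fixed $n$, the strict inequality $\ell(S^3_{-1/n}(K)) < \ell(S^3_{1/n}(K))$. Orientation reversal plays no role. The inequality is obtained from cobordism maps: Floer's exact triangle handles the passage between consecutive $1/n$-surgeries, while the crucial comparison across $\pm 1$ comes from a new \emph{distance-two surgery exact triangle} (Proposition~\ref{prop:CDX-triangle}). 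The key subtlety is that the obvious cobordism from $S^3_{-1}(K)$ to $S^3_1(K)$ has $b^+ > 0$ and does not decrease the filtration; instead one uses the nullhomotopy map $g_1$ arising from a one-parameter family of metrics on a different (negative-definite) cobordism, and shows that $g_1$ is a quasi-isomorphism which strictly decreases the shifted filtration. Your proposal does not identify this map or the exact triangle that produces it, and ``asymptotic in $n$'' would not substitute for it: the inequality must hold for each $n$ individually, since a putative cosmetic pair lives at a single $n$.
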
   

For notation, we say that $p/q$ and $p'/q'$ are a cosmetic pair.  Note that the orientations here are key.  For example, $S^3_{p/q}(4_1)$ is orientation-reversing diffeomorphic to  $S^3_{-p/q}(4_1)$ for any $p/q$ since the figure-eight knot is amphichiral. As a less tautological example, the trefoil admits the chirally cosmetic surgery $S^3_9(3_1) \cong -S^3_{9/2}(3_1)$ \cite{Mathieu}. 

There has been quite a lot of progress on this conjecture.  For example, $\infty$ can never be part of a cosmetic pair \cite[Theorem 2]{GL}.  A sequence of results using Heegaard Floer homology \cite{WangCosmetic,OzSz,Wu,NiWu,Hanselman} gave increasingly stronger constraints on the potential surgery slopes. In particular, \cite[Theorem 1.2]{NiWu} and \cite[Theorem 2(i)]{Hanselman} combine to give:

\begin{theorem}\label{thm:cosmetic-previous}
If $p/q$, $p'/q'$ are a cosmetic pair for a non-trivial knot $K \subset S^3$, then $p/q = -p'/q'$.  Furthermore, $p/q = \pm 2$ or $\pm 1/n$ for some non-zero integer $n$.   
\end{theorem}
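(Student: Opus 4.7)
The plan is to combine two complementary Heegaard Floer arguments from the cited literature.

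First, following \cite{NiWu}, I would use the Heegaard Floer $d$-invariants (correction terms) of the surgeries. These are oriented-diffeomorphism invariants of the resulting three-manifold, and by the Ozsv\'ath--Szab\'o mapping cone formula they are computable from the knot Floer complex $CFK^\infty(K)$ in terms of a sequence of integers $V_s(K)$. For a cosmetic pair $\{p/q, p'/q'\}$, the two families of $d$-invariants must match under some identification of Spin$^c$ structures. Comparing this constraint with the Casson--Walker invariant, which also must agree and has a closed formula in $p,q$ involving $\Delta_K''(1)$, yields the symmetry $q' = -q$, and hence $p' = -p$. Then, with $p/q$ and $-p/q$ matched, imposing equality of $d$-invariants across every Spin$^c$ structure forces integrality and divisibility conditions on the $V_s(K)$'s which, for a non-trivial $K$, restrict $|q|$ to a small range and narrow the list of possible slopes.

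To eliminate all but $\pm 2$ and $\pm 1/n$, I would then invoke Hanselman's refinement via immersed curves. The bordered Floer invariant of the knot complement $S^3 \setminus K$ is represented as an immersed multicurve on the once-punctured torus, and Dehn filling along slope $p/q$ corresponds to taking intersection numbers with a line of that slope. The $d$-invariants can be extracted from this intersection data, so a cosmetic pair imposes a precise symmetry on the multicurve. Hanselman uses this to derive an inequality of the form $|q|(2g(K)-1) \le 2\,\mathrm{th}(K)$ with $\mathrm{th}$ denoting thickness; combined with the earlier constraints, the only remaining slopes are $p/q = \pm 2$ or $\pm 1/n$.

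The main obstacle is the second step. The first relies on formal properties of the mapping cone formula and elementary manipulation of $d$-invariants once one identifies the Spin$^c$ correspondence. The second requires Hanselman's geometric interpretation of bordered invariants as immersed curves, together with genuinely sharp thickness-versus-genus bounds; it is this step that reduces the problem from a condition of the form ``$|q|$ small, $p$ almost divisible'' all the way down to the explicit finite list in the statement.
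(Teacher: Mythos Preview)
The paper does not give its own proof of this theorem: it is stated as a combination of cited results, namely \cite[Theorem 1.2]{NiWu} for the conclusion $p/q = -p'/q'$ and \cite[Theorem 2(i)]{Hanselman} for the restriction to $\pm 2$ or $\pm 1/n$. Your proposal is therefore not competing with a proof in the paper but rather sketching the content of those references, and at that level it is broadly accurate: Ni--Wu use the Heegaard Floer $d$-invariants together with the rational surgery formula to obtain $q' = -q$, and Hanselman's immersed-curve reformulation of the bordered invariant yields the slope restriction.

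A couple of details in your sketch are slightly off. The Casson--Walker input (that $\Delta_K''(1)=0$) goes back to Boyer--Lines and is used alongside, rather than as the mechanism for, the $d$-invariant comparison; the equality $p'=p$ was already known before Ni--Wu, and their contribution is the step $q'=-q$ via $d$-invariants. The inequality you attribute to Hanselman is not the one he actually proves; his bound has a different shape (involving $q^2$ and the genus), though the qualitative conclusion is as you describe. None of this affects the correctness of the overall strategy, but if you intend to cite these arguments you should align the statements with what is actually in those papers.
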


We remark that these arguments using Heegaard Floer homology also give further constraints on the cosmetic surgery slopes given more information about the knot (e.g. its knot Floer homology).  Hanselman verified the cosmetic surgery conjecture for all knots up to $16$ crossings \cite[Theorem 6]{Hanselman}.  There have also been many important advances using other aspects of low-dimensional topology.  Detcherry found various obstructions using quantum invariants, and checked the conjecture for knots up to $17$ crossings \cite[Corollary 1.10]{Detcherry}.  Additionally, work of Futer--Purcell--Schleimer \cite[Theorem 7.29]{FPS} rules out cosmetic pairs for hyperbolic knots roughly whenever the slopes have large enough length. They were then able to use bounds on hyperbolic invariants to verify the cosmetic surgery conjecture for all knots up to $19$ crossings \cite[Theorem 2.10]{FPS2}.  For some other examples of the variety of progress on the cosmetic surgery conjecture, see \cite{ItoBridge,SSPretzel, TaoPrime}.  

There is some upper limit on what the Heegaard Floer homology techniques can give: Ozsv\'ath--Szab\'o observed that the Heegaard Floer homology of the $1$ and $-1$ surgeries on the knot $9_{44}$ are graded isomorphic \cite[Section 9]{OzSz}. In the current article, we incorporate a filtration on instanton Floer homology provided by the values of the Chern--Simons functional to obtain more information.  
\begin{theorem}\label{thm:1/n}
If $K$ is a non-trivial knot in $S^3$, then $S^3_{1/n}(K)$ and $S^3_{-1/n}(K)$ are not orientation-preserving diffeomorphic for any integer $n \neq 0$. Furthermore, they cannot be related by a ribbon homology cobordism.
\end{theorem}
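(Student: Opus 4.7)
The plan is to extract a real-valued invariant from the Chern--Simons filtration on Floer's irreducible instanton homology, show that this invariant is monotone under ribbon homology cobordism, and verify that it distinguishes $S^3_{1/n}(K)$ from $S^3_{-1/n}(K)$ for any non-trivial knot $K$.

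First, I would define the filtered invariant. For an integer homology sphere $Y$, once one lifts the Chern--Simons functional to $\R$ (using a basepoint flat connection or a framing of $Y$), one obtains an $\R$-filtration on Floer's instanton chain complex by CS values. From this I would extract a numerical invariant $\Gamma(Y) \in \R$ encoding the minimal Chern--Simons level at which a homologically essential class lives, in the spirit of the spectral invariants studied by Daemi and by Nozaki--Sato--Taniguchi. Second, I would prove that $\Gamma$ is monotone under ribbon homology cobordism: if $W: Y \to Y'$ is a ribbon homology cobordism, the induced map on instanton Floer homology must respect the CS filtration up to a shift controlled by the topology of $W$, and since $W$ is built without $3$-handles and is a rational homology cobordism, this shift vanishes (or is pinned down). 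This yields $\Gamma(Y) \leq \Gamma(Y')$ with an appropriate sign convention, which suffices to rule out ribbon homology cobordisms in either direction if we have strict inequality.

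Third, I would compute $\Gamma$ on the two surgeries in terms of representations of the knot complement. Since $K$ is non-trivial, a theorem of Kronheimer--Mrowka produces many irreducible $SU(2)$ representations of $\pi_1(S^3 \setminus K)$; those sending the surgery slope to the identity descend to flat connections on $S^3_{\pm 1/n}(K)$. Kirk--Klassen style formulas express the CS value of such an extended connection in terms of the meridian and longitude holonomies, and the dependence on the sign of the slope is explicit: replacing $n$ by $-n$ negates the slope contribution to CS while leaving the intrinsic contribution of the representation fixed, producing two generally distinct sets of Chern--Simons values. Combining this computation with the new surgery relation announced in the abstract --- which relates the Floer complexes of $\pm 1/n$-surgery --- one extracts a contradiction from $\Gamma(S^3_{1/n}(K)) = \Gamma(S^3_{-1/n}(K))$, and the monotonicity under ribbon cobordism then forbids both orientation-preserving diffeomorphism and a ribbon homology cobordism.

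The main obstacle is verifying that the difference between CS values of individual flat connections actually survives at the level of the homological invariant $\Gamma$. It is not enough that $S^3_{1/n}(K)$ and $S^3_{-1/n}(K)$ carry flat connections with different CS values --- one must show that the \emph{homologically distinguished} connection, the one recorded by $\Gamma$, sits at a different filtration level on the two sides. This is precisely where the new surgery relationship for Floer's instanton homology enters, permitting one to transport and compare Floer classes between the two surgeries while tracking the CS filtration. Establishing this surgery relation in a filtered form, and extracting from it the precise shift between $\Gamma(S^3_{1/n}(K))$ and $\Gamma(S^3_{-1/n}(K))$, is the technical heart of the argument.
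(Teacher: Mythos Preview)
Your overall framework matches the paper: define a Chern--Simons filtered numerical invariant (the paper calls it $\ell$), show it behaves well under ribbon homology cobordisms, and prove strict inequality $\ell(S^3_{-1/n}(K)) < \ell(S^3_{1/n}(K))$. But the mechanism you propose for the strict inequality has a genuine gap.

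Your step three, computing $\Gamma$ via Kirk--Klassen formulas and comparing CS values of individual representations, is not how the argument works and is unlikely to succeed for an arbitrary knot. You correctly diagnose the problem yourself: one cannot identify which flat connection is ``homologically distinguished'' by staring at CS values alone. The paper never attempts this. Instead, the strict inequality comes purely from the \emph{structure} of a cobordism map. In the proof of the distance-two exact triangle, the null-homotopy of the composite $I_*(Y_1) \to I_*(Y)^2 \to I_*(Y_{-1})$ is a map $g_1$ counting instantons over a one-parameter family of metrics on a negative-definite cobordism $W^1_{-1}$. When $Y = S^3$, this $g_1$ is itself a chain map and a quasi-isomorphism, and it extends to an $IP$-morphism of degree $D=3$ and level $L = 1/4 - \eta(K)$ with $\eta(K) > 0$ for non-trivial $K$. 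The inequality $\ell(S^3_{-1}(K)) \le \ell(S^3_1(K)) + (L - D/8) < \ell(S^3_1(K)) - 1/8$ then drops out of the formal Lemma~\ref{lemma:kappa-ineq}, with no need to locate any particular representation. The remaining inequalities for general $n$ come from iterating the ordinary $2$-handle cobordism maps (via Floer's triangle and the distance-two triangle), which are again $IP$-morphisms with $L - D/8 < 0$.

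Your ribbon cobordism step is also incomplete. Monotonicity alone gives only $\ell(Y) \le \ell(Y')$ in one direction. The paper uses more: a ribbon homology cobordism $X$ and its flip $\overline X$ induce level-zero $IP$-morphisms which are respectively injective and surjective on $I_*$ by \cite{ribbon-hom}, and Lemma~\ref{lemma:n-surgery-calc} shows $I_*(S^3_{1/n}(K);\Bbb F_2)$ and $I_*(S^3_{-1/n}(K);\Bbb F_2)$ have the same rank. Hence both maps are isomorphisms, forcing $\ell(S^3_{1/n}(K)) = \ell(S^3_{-1/n}(K))$, which contradicts the strict inequality.
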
  

Recall that a ribbon homology cobordism is a homology cobordism that admits a handle decomposition without any 3-handles \cite{ribbon-hom}.

Combining this theorem with the previous literature and an argument using the Casson invariant (see Theorem~\ref{prop:alexander} below) yields:
\begin{corollary} \label{cor:rem-case}
If a non-trivial knot admits a cosmetic surgery, then the pair of slopes is $\pm 2$, the knot has genus 2, the Alexander polynomial is trivial, and the value of the Jones polynomial at $e^{2\pi i/5}$ is $1$.  In particular, the cosmetic surgery conjecture holds for fibered knots, knots which are not topologically slice, and $HFK$-thin knots.    
\end{corollary}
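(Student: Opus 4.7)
My plan is to assemble the corollary from Theorem \ref{thm:1/n} together with previously known cosmetic surgery obstructions. First, Theorem \ref{thm:cosmetic-previous} restricts any cosmetic pair of slopes for a non-trivial knot to the form $\pm 2$ or $\pm 1/n$ for a non-zero integer $n$, and Theorem \ref{thm:1/n} eliminates the $\pm 1/n$ possibility entirely. Hence the cosmetic pair must be $\pm 2$. I would then invoke Hanselman's theorem \cite[Theorem 2]{Hanselman}, which in the $\pm 2$ case forces $g(K) = 2$.

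The triviality of the Alexander polynomial is precisely the content of Theorem \ref{prop:alexander}, which applies a Casson invariant calculation to $\pm 2$ cosmetic surgery. The constraint on the value of the Jones polynomial at $e^{2\pi i/5}$ is Detcherry's quantum invariant obstruction \cite[Corollary 1.10]{Detcherry}. Together these give the first sentence of the corollary.

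For the final clause about fibered knots, non-topologically-slice knots, and $\widehat{HFK}$-thin knots, I would derive three short consequences of $g(K) = 2$ and $\Delta_K = 1$. A fibered knot of genus $g$ has monic Alexander polynomial of degree $2g$, which for $g(K) = 2$ contradicts $\Delta_K = 1$. A knot with $\Delta_K = 1$ is topologically slice by Freedman's theorem, so a non-topologically-slice knot admits no cosmetic surgery. Finally, for an $\widehat{HFK}$-thin knot, the bigraded knot Floer homology is determined by the Alexander polynomial and signature, so $\Delta_K = 1$ forces $\widehat{HFK}(K) \cong \mathbb{Z}$; this characterizes the unknot by Ozsv\'ath--Szab\'o, contradicting non-triviality of $K$.

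The substance of the corollary is packaged entirely into the earlier Theorems \ref{thm:1/n} and \ref{prop:alexander}; the present deduction is essentially bookkeeping. The main obstacle therefore lies not in this corollary but in the proof of Theorem \ref{thm:1/n}, which requires the filtered instanton Floer machinery developed later in the paper. A minor secondary care point is making sure that Hanselman's $\pm 2$ conclusion is stated in a form that gives $g(K) = 2$ directly, without residual hypotheses on the knot Floer complex that would need separate verification.
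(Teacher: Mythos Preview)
Your proposal is correct and follows essentially the same approach as the paper: combine Theorems \ref{thm:cosmetic-previous} and \ref{thm:1/n} to reduce to $\pm 2$, invoke Hanselman for $g(K)=2$, Theorem \ref{prop:alexander} for $\Delta_K=1$, Detcherry for the Jones polynomial value, and then deduce the consequences for fibered, non-topologically-slice, and $\widehat{HFK}$-thin knots from $\Delta_K=1$. One minor slip: the Jones polynomial constraint is \cite[Theorem 1.4]{Detcherry}, not Corollary 1.10 (which is the 17-crossing verification).
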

\begin{proof}
That the only possible pair of cosmetic slopes is $\pm 2$ is a combination of Theorems~\ref{thm:cosmetic-previous} and \ref{thm:1/n}.  Hanselman shows that if $\pm 2$ is a cosmetic pair, then the knot has genus $2$. The claim about the Jones polynomial of $K$ follows from \cite[Theorem 1.4]{Detcherry}. Finally, in Theorem~\ref{prop:alexander} below, we show that in this special case the Alexander polynomial must be trivial.  Since non-trivial knots with Alexander polynomial one are non-fibered, are topologically slice \cite{Freedman-Disk, FreedmanQuinn}, and have thick knot Floer homology \cite{OzSz-HFK, OzSz-Genus}, we have the desired result.
\end{proof}

\begin{remark}
Since any alternating knot is $HFK$-thin, Corollary \ref{cor:rem-case} implies that the cosmetic surgery conjecture holds for alternating knots. For a large family of alternating knots, this was already verified in the recent work \cite{IJ:alt}.
\end{remark}

\begin{remark}\label{rmk:alex-jones}
Dave Futer has reported to the authors that there are in fact no non-trivial knots with at most 17 crossings which have both $J_K(e^{2\pi i/5}) = 1$ and $\Delta_K = 1$. 
We also remark that the results of \cite{Detcherry} can be used to give further constraints for the remaining case of surgery slopes $\pm 2$ in terms of the colored Jones polynomial of $K$.
\end{remark}

\begin{remark}
Combined with the work of Ravelomanana \cite{Ravelomanana}, this proves that there are no exceptional cosmetic surgeries on hyperbolic knots in $S^3$.  Since the first version of this article appeared, Ren used Theorem~\ref{thm:1/n} to reduce the cosmetic surgery conjecture to the case of hyperbolic knots \cite{Ren}.  
\end{remark}

Using similar arguments, we are also able to prove a restricted cosmetic surgery statement in $S^2 \times S^1$.  
\begin{theorem}\label{thm:S2timesS1}
If $L$ is a knot in $S^2 \times S^1$ which is homologous but not isotopic to $\{\pt\} \times S^1$, then no two integral surgeries on $L$ can produce the same oriented three-manifold.  Furthermore, they cannot be related by a ribbon homology cobordism.
\end{theorem}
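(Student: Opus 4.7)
The plan is to mirror the proof of Theorem~\ref{thm:1/n}, replacing $\pm 1/n$-surgeries in $S^3$ by integer surgeries in $S^2\times S^1$. The first step is homological. Identifying $S^2\times S^1$ with $0$-surgery on an unknot $U\subset S^3$, the knot $L$ (being homologous to $\{\pt\}\times S^1$) lifts to a knot $L'\subset S^3$ with $\operatorname{lk}(U,L')=1$, and an integer framing on $L$ is inherited from the Seifert framing of $L'$. The linking matrix
\[
\begin{pmatrix} 0 & 1 \\ 1 & n \end{pmatrix}
\]
has determinant $-1$, so each $Y_n := (S^2\times S^1)_n(L)$ is an integer homology sphere. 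Moreover, for the trivial knot $L = \{\pt\}\times S^1$ a slam-dunk computation shows $Y_n\cong S^3$ for every $n$, so integer surgeries on a trivial knot in $S^2\times S^1$ are never distinguishable as oriented $3$-manifolds; the theorem is exactly the converse of this.

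Next I would apply the filtered instanton Floer invariants from the earlier sections of this paper to each $Y_n$. These are invariants of the oriented homeomorphism type and, by construction, are constant under ribbon homology cobordism; so if $Y_n$ and $Y_m$ are related by such a cobordism for some $n\neq m$, their filtered invariants must coincide. It therefore suffices to show that whenever $L$ is not isotopic to $\{\pt\}\times S^1$, the filtered invariants distinguish $Y_n$ from $Y_m$ for every pair of distinct integers $n, m$.

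To prove this I would feed the $2$-handle cobordism $W_n\colon Y_n\to Y_{n+1}$ (attaching a $2$-handle to $S^2\times S^1$ along $L$ with framing $n$) into the new surgery relationship for Floer's instanton homology introduced in the paper. Tracking the Chern--Simons filtration across the resulting exact triangle, exactly as is done for $\pm 1/n$-surgeries in the proof of Theorem~\ref{thm:1/n}, should yield a formula for the CS shift between $Y_n$ and $Y_{n+1}$; one then argues that the non-isotopy assumption on $L$ forces this shift to be nonzero, producing a strictly monotone sequence indexed by $n$. The main obstacle is this last step: to extract from the topological hypothesis (that $L$ is knotted in $S^2\times S^1$) a concrete positive CS contribution coming from irreducible $SU(2)$ flat connections on the exterior of $L$. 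Because this exterior has $H_1 = \Z$ rather than being a $\Q$-homology handlebody, its character variety is more intricate than in the $S^3$ setting, and one must verify that the knottedness of $L$ forces at least one such representation to sit at a strictly positive CS level that shifts nontrivially as $n$ varies.
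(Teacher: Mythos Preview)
Your proposal has the right shape---prove strict monotonicity of a filtered invariant $\ell(Y_n)$ in $n$---but misidentifies both the relevant exact triangle and the source of strictness, and the ``main obstacle'' you describe is not the actual difficulty.

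The triangle needed is not the new distance-two triangle but Floer's original one (Proposition~\ref{prop:Floer-triangle}), applied to the pair $(Y_n, K_n)$ with $K_n$ the dual knot. The third vertex is $I_*^w(S^2\times S^1)=0$, so the $2$-handle cobordism map $W(n)_*: I_*(Y_n)\to I_*(Y_{n+1})$ is an isomorphism outright; no further analysis is required for this. The strict decrease in filtration then comes from the observation that $W(n)$ is \emph{simply-connected}: by Seifert--Van Kampen, $\pi_1(W(n))$ is the quotient of $\pi_1(S^2\times S^1)$ by the normal closure of $[L]$, which is trivial since $L$ generates $H_1$. Proposition~\ref{prop:IP-morphism} then gives $\eta(W(n))>0$, and Lemma~\ref{lemma:kappa-ineq}(b) yields $\ell(Y_{n+1})<\ell(Y_n)$. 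No examination of the $SU(2)$ character variety of the exterior of $L$ is needed.

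The non-isotopy hypothesis enters only to guarantee $\ell(Y_n)<\infty$, i.e.\ $I_*(Y_n)\ne 0$; this is supplied by \cite[Theorem~1.3]{LPCZ}, since the exterior of $L$ is irreducible with incompressible boundary when $L$ is not $\{\pt\}\times S^1$. Finally, the filtered invariants are not automatically constant under ribbon homology cobordism. One instead uses \cite[Theorem~4.1]{ribbon-hom}: a ribbon homology cobordism $X:Y_n\to Y_m$ and its flip $\overline X$ induce an injection and a surjection on $I_*$, which---since the $W(n)_*$ already force equal ranks---are isomorphisms; being $IP$-morphisms of degree $0$ and level $0$, they force $\ell(Y_n)=\ell(Y_m)$ by Lemma~\ref{lemma:kappa-ineq}(b), contradicting monotonicity.
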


Note that the three-manifolds described in Theorem~\ref{thm:S2timesS1} are all homology spheres. Furthermore, they are boundaries of  {\it Mazur manifolds} and hence are homology cobordant to $S^3$. (A Mazur manifold is a four-manifold obtained by attaching a 2-handle to $S^1 \times D^3$ along a curve which generates $H_1$.) In particular, the homology spheres from Theorem \ref{thm:S2timesS1} are homology cobordant to each other, but the ones provided by surgeries on the same knot are not related by homology cobordisms without any 3-handles.

A Mazur manifold is $B^4$ if and only if the attaching curve is isotopic to $\{\pt\} \times S^1$ \cite{Gabai-FoliationsII}. Theorem \ref{thm:S2timesS1} has the following related four-dimensional consequence. We are not aware of any other three- or four-dimensional proof of this fact in general.  

\begin{corollary}
If $M$ is a Mazur manifold other than $B^4$, then changing the framing of the 2-handle in the Mazur description results in a four-manifold not oriented homeomorphic to $M$.  
\end{corollary}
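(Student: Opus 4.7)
The plan is to reduce to Theorem~\ref{thm:S2timesS1} by restricting to the boundary. A Mazur manifold $M$ is built from $S^1 \times D^3$ by attaching a single 2-handle along a knot $L \subset \partial(S^1 \times D^3) = S^2 \times S^1$ which generates $H_1$, with some integral framing. The framings on such a knot form a $\mathbb{Z}$-torsor, so ``changing the framing'' produces a new Mazur manifold $M'$ using the same attaching curve $L$. Crucially, the boundary of $M$ is the integral surgery on $L$ with the chosen framing, so distinct framings yield boundaries $\partial M$ and $\partial M'$ that are two distinct integral surgeries on the same knot $L \subset S^2 \times S^1$.

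Next I will invoke Gabai's theorem \cite{Gabai-FoliationsII}, which asserts that $M \cong B^4$ if and only if $L$ is isotopic to $\{\pt\} \times S^1$. Under the hypothesis that $M \neq B^4$, the knot $L$ is not isotopic to $\{\pt\} \times S^1$, so it satisfies the hypotheses of Theorem~\ref{thm:S2timesS1}. Applying that theorem directly yields that $\partial M$ and $\partial M'$ are not orientation-preservingly diffeomorphic. Since in dimension three homeomorphism is equivalent to diffeomorphism, the same non-equivalence holds in the topological category.

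To conclude, any orientation-preserving homeomorphism $M \to M'$ of 4-manifolds would restrict to an orientation-preserving homeomorphism of their boundaries, contradicting the previous paragraph. The argument is essentially formal once Theorem~\ref{thm:S2timesS1} is in hand; the only small points to check are the invocation of Gabai's theorem to rule out the trivial case and the interchangeability of smooth and topological categories in dimension three, neither of which poses a real obstacle. The substantive work has been done in proving Theorem~\ref{thm:S2timesS1}, and this corollary merely translates it across the boundary.
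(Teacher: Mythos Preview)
Your proof is correct and matches the paper's intended argument: the paper does not write out a proof, but the paragraph preceding the corollary supplies exactly the ingredients you use (the identification of $\partial M$ with integral surgery on $L$, and the Gabai input that $M \ne B^4$ forces $L \not\simeq \{\pt\}\times S^1$), after which Theorem~\ref{thm:S2timesS1} distinguishes the boundaries and hence the 4-manifolds. One minor remark: the direction of Gabai you actually use, $L \simeq \{\pt\}\times S^1 \Rightarrow M = B^4$, is just handle cancellation; the deep direction of \cite{Gabai-FoliationsII} is the converse, which you do not need here.
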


As another application of Theorem~\ref{thm:S2timesS1}, we are able to reprove and extend part of a theorem of Josh Wang \cite[Theorem 1.1]{Wang}.
\begin{corollary}\label{cor:josh}
Let $K_0$ be obtained by a non-trivial band surgery on a two-component unlink, and $K_n$ be the result of putting $n$ full twists in the band. Then for any non-zero integer $p$, $S^3_{1/p}(K_n)$ and $S^3_{1/p}(K_m)$ are not orientation-preserving diffeomorphic unless $m = n$.  Consequently, $K_n$ and $K_m$ are distinct knots for any two different values of $m$ and $n$.  
\end{corollary}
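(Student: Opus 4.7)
To prove Corollary~\ref{cor:josh}, I plan to apply Theorem~\ref{thm:S2timesS1} by realizing each manifold $S^3_{1/p}(K_n)$ as an integer Dehn surgery on a single knot $L \subset S^2 \times S^1$ which is homologous but not isotopic to $\{\pt\} \times S^1$, with surgery slope depending injectively on $n$. The second statement---that the $K_n$ are pairwise distinct as knots---then follows at once from the first, since equal knots yield equal surgeries.

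The knot $L$ is built as follows. Let $U_1, U_2$ be the two unknots and $b$ the band, and let $\beta \subset S^3 \setminus K_0$ be a meridian of $U_1$ chosen disjoint from $b$. Then $\beta$ is an unknot with $\mathrm{lk}(\beta, K_0) = 1$, and $0$-surgery on $\beta$ sends $S^3$ to $S^2 \times S^1$ and takes $K_0$ to a knot $L$ of homology class $[\{\pt\} \times S^1]$.

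The first task is to verify that $L$ is not isotopic to $\{\pt\} \times S^1$. The $4$-manifold $W = (S^1 \times D^3) \cup_L h^2$ (with appropriate framing) is a Mazur manifold whose boundary is $(S^2 \times S^1)_0(L) = S^3_{0,0}(\beta \cup K_0)$. By Gabai's theorem \cite{Gabai-FoliationsII}, $W = B^4$ if and only if $L$ is isotopic to $\{\pt\} \times S^1$; if this held, then $S^3_{0,0}(\beta \cup K_0) = S^3$, which (by a $2$-component analog of Property R) forces $\beta \cup K_0$ to be the Hopf link and hence $K_0$ to be the unknot, contradicting the non-triviality of the band surgery.

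The second, more technical, task is to realize $S^3_{1/p}(K_n)$ as an integer Dehn surgery on $L$. The starting point is the identity
\[
S^3_{1/p}(K_n) = S^3_{(1/p,\, -1/n)}(K_0 \cup C),
\]
where $C$ is the twist circle of the band (an unknot with $\mathrm{lk}(C, K_0) = 0$). Passing from $S^3$ to $S^2 \times S^1$ via the $\beta$-surgery and applying a sequence of Kirby moves (slam-dunks, Rolfsen twists) to absorb the rational slopes, one obtains a description of the form $S^3_{1/p}(K_n) = (S^2 \times S^1)_{k(n,p)}(L)$ for some integer $k(n,p)$ depending injectively (in fact, linearly) on $n$ for each fixed $p \neq 0$.

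With these two tasks in hand, Theorem~\ref{thm:S2timesS1} applied to $L$ with the distinct slopes $k(n,p)$ and $k(m,p)$ yields that $S^3_{1/p}(K_n) \not\cong S^3_{1/p}(K_m)$ as oriented $3$-manifolds (and not even related by a ribbon homology cobordism).

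The principal obstacle is the surgery calculus underlying the integer-surgery description: while it is intuitively clear that the twist in the band contributes linearly to the $2$-handle framing, verifying that the $1/p$-slope in $S^3$ is absorbed into an integer slope on $L$ in $S^2 \times S^1$ requires careful tracking of framings through the Kirby moves and the $\beta$-surgery.
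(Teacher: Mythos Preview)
Your approach has a fatal gap: the knot $L$ you construct is always isotopic to $\{\pt\}\times S^1$, so Theorem~\ref{thm:S2timesS1} never applies to it. Your curve $\beta$ is a meridian of one component of the unlink and hence a meridian of $K_0$, so $\mathrm{lk}(\beta,K_0)=1$. After $0$-surgery on $\beta$, the essential $2$-sphere in $S^2\times S^1$ is the union of $\beta$'s meridian disk with the surgery disk, and the image $L$ of $K_0$ meets this sphere transversely in a single point. The light-bulb trick then isotopes $L$ to $\{\pt\}\times S^1$. Consequently every integer surgery on your $L$ yields $S^3$, so the identity $S^3_{1/p}(K_n)=(S^2\times S^1)_{k(n,p)}(L)$ you need cannot hold for any non-trivial $K_n$. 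This also explains why the Property-R-style argument for ``$L$ not the core'' could not succeed: the conclusion you were trying to rule out is in fact true.

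The paper takes a different curve. Fixing $p$, it lets $\gamma$ be the meridian of the \emph{band} (so $\mathrm{lk}(\gamma,K_0)=0$), viewed as a knot in $Y:=S^3_{1/p}(K_0)$. A Rolfsen twist gives $Y_{-1/n}(\gamma)=S^3_{1/p}(K_n)$, while $Y_0(\gamma)=S^2\times S^1$; thus $S^3_{1/p}(K_n)$ is integer surgery on the dual knot of $\gamma$ in $S^2\times S^1$. This dual knot is \emph{not} the core precisely because one of the integer surgeries on it returns $Y=S^3_{1/p}(K_0)$, which is not $S^3$ when the band is non-trivial \cite{Scharlemann}. Theorem~\ref{thm:S2timesS1} then finishes the argument. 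The essential difference from your attempt is the choice of linking number: the band meridian links $K_0$ zero times, so the resulting knot in $S^2\times S^1$ meets the essential sphere twice and is not forced to be the core.
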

\begin{proof}
Fix $p$, and let $\gamma$ denote the meridian of the band, thought of as a knot in $S^3_{1/p}(K_0)$. The manifold $S^3_{1/p}(K_0)$ is not $S^3$ as long as $p \neq 0$ and $K_0$ is non-trivial, which is the case when the band is non-trivial \cite{Scharlemann}.  Then $S^2 \times S^1$ is $0$-surgery on $\gamma$, while $S^3_{1/p}(K_n)$ is the result of $-1/n$-surgery on $\gamma$, or alternatively, integral surgery on the core of $\gamma$ in $S^2 \times S^1$.  The result follows from Theorem~\ref{thm:S2timesS1}.
\end{proof}
Note that the statement is in some ways slightly stronger than Wang's theorem for full twists, as his results do not immediately distinguish the surgered manifolds.  However, his results apply to a broader class of links, namely possibly half-twisted bandings between arbitrary two-component split links.  Wang has proposed a strategy for generalizing Corollary~\ref{cor:josh} to this more general case, but it seems like the requisite technical machinery for instanton Floer homology is not currently developed.

\begin{remark} 
While we do not do this here, similar ideas to the ones in this paper can be applied to prove analogous results in other manifolds. For example, suppose $K$ is a knot in $\mathbb{RP}^3$ which is neither null-homologous nor a core curve of the genus 1 Heegaard splitting. Then the two integer homology spheres which arise as integral surgeries on $K$ are distinct as oriented three-manifolds.  
\end{remark}

\subsection{The strategy and surgery exact triangles in instanton Floer homology}
We now describe our overarching strategy.  For the introduction, we give broad strokes for non-experts. Instanton Floer homology morally assigns to a homology sphere a chain complex over $\mathbb{Z}[t,t^{-1}]$ whose generators corresponds to conjugacy classes of non-trivial $SU(2)$ representations of $\pi_1$. Any monomial generator $\alpha$ gets a real value from the Chern--Simons functional denoted $\CS(\alpha)$ and a $\Bbb Z$-grading denoted by $\gr(\alpha)$. Multiplication by $t$ raises $\gr(\alpha)$ by 8 and $\CS(\alpha)$ by 1.\footnote{For the experts, we consider flat connections modulo degree zero gauge transformations, so Chern--Simons can be viewed as real-valued and the associated Floer homology is $\mathbb{Z}$-graded.  Of course, we also need to introduce perturbations everywhere to obtain the desired transversality results.  We assume in this introduction that all moduli spaces are cut out transversely and so no perturbations are needed.}  Furthermore, the differential has degree $-1$ with respect to $\gr$ and is filtered by the Chern--Simons functional.  Our strategy is to show that the Chern--Simons filtered instanton homologies of different surgeries on knots are different.  The strategy is roughly to find an isomorphism which strictly lowers the filtration. These maps will come from functoriality of instanton Floer complexes with respect to cobordisms. 

A cobordism $W:Y \to Y'$ between integer homology spheres induces a map of instanton Floer homologies by counting ASD connections, given a choice of bundle on $W$.   In the simplest case, if $W$ is a negative-definite cobordism and $A$ is an ASD connection over the trivial $SU(2)$-bundle on $W$ which is asymptotic to monomial generators $\alpha$ at $Y$ and $\alpha'$ at $Y'$, then $\CS(\alpha) \geq \CS(\alpha')$ and $\gr(\alpha) = \gr(\alpha')$.  This inequality is strict if $\pi_1(W) = 0$.  Consequently, if $W$ is simply-connected, negative-definite, and $W: Y \to Y'$ induces an isomorphism on instanton homology, then we get a filtration decreasing isomorphism of Floer homologies. If the instanton Floer homology of $Y$ is non-trivial, this is only possible if the filtered instanton Floer homologies of $Y$ and $Y'$ are different, and so the three-manifolds are not orientation-preserving diffeomorphic to each other. This strategy is sufficient to prove Theorem~\ref{thm:S2timesS1}, which we next describe.  The strategy for Theorem \ref{thm:1/n} is more complicated, and is discussed after the warm-up case of knots in $S^2 \times S^1$.  

\subsubsection{The strategy to prove Theorem~\ref{thm:S2timesS1}}
We will use Floer's original exact triangle for a knot $K$ in a homology sphere $Y$:
\begin{equation}\label{eq:floer-triangle}
\ldots \to I_*(Y) \to I_*(Y_{-1}(K)) \to I_*^w(Y_0(K)) \to \ldots
\end{equation}
where $I_*^w(Y_0(K))$ is the admissible version of instanton homology for three-manifolds with $w$ determining the bundle data \cite{Floer-knot,BD:sur-tr}. If $L\subset S^2\times S^1$ is given as in the statement of Theorem \ref{thm:S2timesS1}, we apply \eqref{eq:floer-triangle} to the case that $Y$ is given by an integral surgery on $L$ and $K\subset Y$ is the dual knot. Then $Y_0(K)=S^2\times S^1$ and $Y_{-1}(K)$ is the integral surgery on $L$ where we increase the surgery coefficient by one. Since $I_*^w(S^2 \times S^1) = 0$, the map $I_*(Y) \to I_*(Y_{-1}(K))$ in \eqref{eq:floer-triangle} is an isomorphism. This map is induced by the 2-handle cobordism $W:Y \to Y_{-1}(K)$, which is negative definite and simply-connected. Furthermore, $I_*(Y)$ is non-zero because $L$ is not isotopic to $\{\pt\} \times S^1$ \cite[Theorem 1.3]{LPCZ}. Thus the observation in the previous paragraph can be applied to $W$ to show that $Y$ and $Y_{-1}(K)$ are not orientation-preserving diffeomorphic to each other. By stacking such cobordisms together, we can more generally show that any two integral surgeries on $K$ are not orientation-preserving diffeomorphic to each other. The proof of the second part of Theorem~\ref{thm:S2timesS1} uses a result from \cite{ribbon-hom} about the behavior of instanton Floer homology with respect to ribbon homology cobordisms to obtain a similar contradiction.

\subsubsection{The strategy to prove Theorem~\ref{thm:1/n} for $n=\pm 1$}
Unfortunately, it is generally impossible to construct a simply-connected, negative definite cobordism from $S^3_{1/n}(K)$ to $S^3_{-1/n}(K)$ whose usual cobordism map is an isomorphism. Such a cobordism map would necessarily have degree zero, but for example $S^3_1(3_1)$ and $S^3_{-1}(3_1)$ have their instanton Floer homology supported in different gradings; we need a variation on the constructions above to make this work.  We begin with $\pm 1$-surgery. 

In order to relate $S^3_1(K)$ and $S^3_{-1}(K)$, we need to establish another surgery exact triangle --- which we call the \emph{distance-two surgery triangle} --- and extract information from it in a novel way. The exact triangle we prove is predicted in \cite{CDX}, who prove an analogous version for Floer homology with admissible bundles:

\begin{theorem}\label{thm:CDX}
Let $K$ be a knot in a homology sphere $Y$.  Then there is an exact triangle of the following form:
\begin{equation}\label{eq:CDX-triangle}
\ldots \to I_*(Y_1(K)) \to I_*(Y) \oplus I_*(Y)  \to I_*(Y_{-1}(K)) \to \ldots
\end{equation}
\end{theorem}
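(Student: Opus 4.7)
The plan is to derive the distance-two triangle by iterating Floer's original exact triangle and combining it with a direct cobordism construction of the maps, in the spirit of the octahedral axiom. Since the slopes $1$ and $-1$ lie at distance $2$ in the Farey graph, they do not sit inside a single Farey triangle; however, both pair with $\{0,\infty\}$ to give Farey triangles, yielding two instances of Floer's classical surgery triangle:
\begin{equation*}
I_*(Y_1(K)) \xrightarrow{f_+} I_*(Y) \xrightarrow{g_+} I_*^w(Y_0(K)) \xrightarrow{h_+} I_*(Y_1(K))[-1],
\end{equation*}
\begin{equation*}
I_*(Y) \xrightarrow{f_-} I_*(Y_{-1}(K)) \xrightarrow{g_-} I_*^w(Y_0(K)) \xrightarrow{h_-} I_*(Y)[-1].
\end{equation*}
These serve as the technical starting input.

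First, I would construct the maps appearing in the distance-two triangle from specific cobordisms. The map $\varphi: I_*(Y_1(K)) \to I_*(Y) \oplus I_*(Y)$ is assembled as $(F_+^{(1)}, F_+^{(2)})$ from two variants of the cobordism map associated with the $(-1)$-trace $W: Y_1(K) \to Y$: the first is the usual $f_+$ with trivial $SU(2)$-bundle, and the second is induced by the same underlying $W$ equipped with the non-trivial class in $H^2(W;\mathbb{Z}/2)$, which restricts trivially to each boundary since the boundaries are homology spheres. The map $\psi: I_*(Y) \oplus I_*(Y) \to I_*(Y_{-1}(K))$ is constructed from the analogous pair for the $(-1)$-trace $Y \to Y_{-1}(K)$. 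This mirrors the doubling that appears in the admissible-bundle version of \cite{CDX}, where the two summands come from the two cohomology classes of a corresponding cobordism.

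Exactness would then be established by a pair-of-pants neck-stretching argument following the template of Floer's original proof \cite{BD:sur-tr}. One introduces a one-parameter family of metrics on a Y-shaped cobordism $V$ with boundaries $Y_1(K)$, $Y$, and $Y_{-1}(K)$, built by gluing the two $2$-handle traces along their common $Y$. At one end of the family the ASD moduli spaces split along the stretched neck into contributions that realize the composition $\psi \circ \varphi$, producing a chain nullhomotopy; at the opposite end the moduli spaces assemble into a quasi-isomorphism $\mathrm{Cone}(\varphi) \simeq I_*(Y_{-1}(K))$, upgrading the nullhomotopy to a genuine exact triangle via a standard triangle-detection lemma.

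The principal obstacle is the gauge-theoretic analysis in the \emph{non-admissible} setting: unlike \cite{CDX}, one cannot invoke admissibility to rule out reducible connections on the cobordism a priori, so one must carefully choose holonomy perturbations compatible with the entire family of metrics in order to preserve transversality and the cancellation structure needed for the degeneration argument. A secondary subtlety is the bookkeeping that identifies the two classes in $H^2(W;\mathbb{Z}/2)$ with the two summands of $I_*(Y) \oplus I_*(Y)$ and tracks their Chern--Simons contributions; getting the signs and gradings right is essential so that the resulting triangle is compatible with the filtered invariants used later in the proof of Theorem~\ref{thm:1/n}.
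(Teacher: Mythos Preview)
Your proposal has the right broad outline --- maps built from two bundle choices on the 2-handle cobordisms, a family-of-metrics argument, and a triangle detection lemma --- but several essential ingredients are missing or misdescribed.

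First, the opening octahedral strategy does not apply: the two Floer triangles you write down share \emph{two} vertices, $I_*(Y)$ and $I_*^w(Y_0(K))$, not one, so the octahedral axiom gives no direct relation between $Y_1$, $Y\oplus Y$, and $Y_{-1}$. The paper does not iterate Floer's triangle at all; it builds the distance-two triangle from scratch.

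Second, the cobordisms are not what you describe. The paper's elementary cobordism $W^{-1}_{-2}: Y_{-1}(K) \to Y_1(K)$ is not a composite of 2-handle traces; it has a \emph{middle end} diffeomorphic to $\mathbb{RP}^3$, and the other elementary cobordisms have $S^3$ middle ends. The composite cobordisms $W^i_{i-2}$ and $W^i_{i-3}$ then have several middle ends. These extra boundary components are not cosmetic: the $\mathbb{RP}^3$ end carries the unique abelian flat connection whose contribution is what forces the two bundle choices $\hat c$ and $\check c$ to appear, and the cancellation between them is what makes the relations hold.

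Third, a one-parameter family is not enough. The triangle detection lemma requires, beyond the nullhomotopies $g_i$ of $f_{i-1}f_i$, further maps $h_i$ witnessing that $f_{i-2}g_i + g_{i-1}f_i$ is homotopic to the identity. In the paper the $h_i$ come from a \emph{pentagon} of metrics on $W^i_{i-3}$; the five edges correspond to breakings along $Z_{i-1}$, $Z_{i-2}$, two copies of $S^3$ or $\mathbb{RP}^3$, and an embedded $S^2\times S^1$. The decisive computation is that the $S^2\times S^1$ face contributes a map $q_i$ homotopic to the identity. Your description stops at the $g_i$ level and so does not establish exactness.

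Finally, the reducibles are not handled by perturbation. The paper computes indices directly (Lemmas \ref{lemma:min-index}--\ref{lemma:h5}) to show that reducible instantons either have index too large to enter the relevant $0$- and $1$-dimensional moduli spaces, or, in the $\mathbb{RP}^3$-broken case, contribute identically for both $\hat c$ and $\check c$ and hence cancel in the difference defining $g_1$. This cancellation is the structural reason for the two summands of $I_*(Y)$, and it cannot be replaced by a generic perturbation argument.
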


\begin{remark}
A similar exact triangle for the Heegaard Floer homology groups $\widehat{HF}$ is established as \cite[Theorem 3.1]{OS-surgery}. While the groups $\widehat{HF}$ should be understood as analogous to the instanton homology groups $I^\#(Y)$, there is no known analogue of Floer's irreducible instanton homology groups $I_*(Y)$ in Heegaard Floer theory.
\end{remark}

Taking $Y = S^3$, we obtain an isomorphism from $I_*(S^3_{-1}(K))$ to $I_*(S^3_1(K))$ because $I_*(S^3) = 0$.  Unfortunately, the cobordism $W:S^3_{-1}(K) \to S^3_{1}(K)$ inducing the isomorphism has $b^+ > 0$, and it does not behave in the desired way with respect to the Chern--Simons filtration; the existence of this map does not lead to a contradiction.

However, this is not the only cobordism map available to us.  In the proof of the exact triangle, one needs to find a nullhomotopy of the chain map that represents the composition 
\[
  I_*(Y_1(K)) \to I_*(Y) \oplus I_*(Y) \to I_*(Y_{-1}(K)).
\]  
This nullhomotopy comes from a count of ASD connections over a 1-parameter family of metrics on a different cobordism $W'$. In the case of $Y = S^3$, this count turns out to be a chain map $g_1$, and even better, a quasi-isomorphism which is a homotopy inverse of the cobordism map for $W$.  (We emphasize that the 1-parameter family map is different from the usual cobordism map for $W'$, which is identically 0.)  The cobordism $W'$ is negative-definite and if $\alpha_\pm$ on $S^3_{\pm 1}(K)$ are related by an ASD connection over $W'$, then $\CS(\alpha_-) - \gr(\alpha_-)/8 < \CS(\alpha_+) - \gr(\alpha_+)/8$; that is, $W'$ induces a strictly filtered map with respect to a degree-shifted Chern--Simons filtration. The instanton Floer complex is finitely generated over $\Bbb Z[t, t^{-1}]$ and thus, the function $\CS - \gr/8$ is bounded on the collection of all monomial generators. Because $g_1$ is strictly filtered, this once again implies that the filtered Floer homologies of $S^3_1(K)$ and $S^3_{-1}(K)$ are not isomorphic.

This can be explained more succinctly by defining an invariant $\ell(Y)$, which is loosely the minimal value of $\CS - \gr/8$ on a generator of instanton homology.  (See Example~\ref{ex:ell} for the computation of $\ell$ for the Poincar\'e homology sphere.)  The exact triangle \eqref{eq:CDX-triangle} is used to show that $\ell(S^3_{-1}(K)) < \ell(S^3_{1}(K))$.  By further applying \eqref{eq:floer-triangle} and \eqref{eq:CDX-triangle}, we obtain a sequence of inequalities
\begin{equation}\label{eq:ell}
-\infty < \ldots < \ell(S^3_{-1/2}(K)) < \ell(S^3_{-1}(K)) < \ell(S^3_{1}(K)) < \ell(S^3_{1/2}(K)) < \ldots < \infty
\end{equation}
which completes the proof that $S^3_{1/n}(K)$ is not orientation-preserving diffeomorphic to $S^3_{-1/n}(K)$. (The additional inequalities do not require the use of the nullhomotopy maps.) The claim about ribbon homology cobordisms follows as in the proof of Theorem \ref{thm:S2timesS1}. 

\begin{remark}
This argument does not require knowing the theorem of Ni--Wu that $S^3_{p/q}(K) = S^3_{p/q'}(K)$ implies $q = \pm q'$.  
\end{remark}

In \cite[Proposition 1.15]{LLP}, it was shown that the existence of a non-trivial knot $K$ with $S^3_1(K) = S^3_{-1}(K)$ would produce an exotic $S^1 \times S^3 \# S^2 \times S^2$.  The existence of such a knot is now ruled out by Theorem~\ref{thm:1/n}.  Similarly, if there exist non-trivial knots $K_1, \ldots, K_n$ such that $S^3_1(K_i) = S^3_{-1}(K_{i+1})$ for $i = 1,\ldots,n$ with indices computed mod $n$, then there exists an exotic $S^3 \times S^1 \#_n S^2 \times S^2$.  By the inequalities in \eqref{eq:ell}, we see it is also impossible to construct an exotic $S^3 \times S^1 \#_n S^2 \times S^2$ by this scheme. More generally, we obtain the following result.

\begin{theorem}
Let $K_1,\ldots,K_n$ be a sequence of knots and $a_1,\ldots,a_n$ and $b_1,\ldots, b_n$ integers with $a_i > b_{i+1}$ with indices computed mod $n$.  If $S^3_{1/a_i}(K_i) = S^3_{1/b_{i+1}}(K_{i+1})$ for all $i$, then all $K_i$ are unknotted.
\end{theorem}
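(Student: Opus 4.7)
The plan is to combine a Gordon--Luecke reduction with the cyclic structure of the surgery identities, using the strict monotonicity of $\ell$ from \eqref{eq:ell}.

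First I reduce to the case that every $K_i$ is non-trivial.  If some $K_j$ is the unknot, integer surgery on it yields $S^3$, so $S^3_{1/a_j}(K_j) = S^3$; the hypothesis then gives $S^3_{1/b_{j+1}}(K_{j+1}) = S^3$, and by the Gordon--Luecke theorem $K_{j+1}$ must also be the unknot.  Iterating around the cycle forces all $K_i$ to be unknotted.

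Next, assume for contradiction that every $K_i$ is non-trivial, and set $M_i := S^3_{1/a_i}(K_i) = S^3_{1/b_{i+1}}(K_{i+1})$.  Summing $\ell(M_i)$ over $i$ using each surgery description and re-indexing the second sum cyclically yields the identity
\[
\sum_{i=1}^n \left[\ell\!\left(S^3_{1/a_i}(K_i)\right) - \ell\!\left(S^3_{1/b_i}(K_i)\right)\right] = 0,
\]
while the cyclic hypothesis $a_i > b_{i+1}$ is equivalent, by the same re-indexing, to $\sum_i(a_i - b_i) > 0$.

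The contradiction will then come from a quantitative refinement of \eqref{eq:ell}: each unit slope change $n \to n+1$ across the $-1$-framed 2-handle cobordism of Floer's triangle \eqref{eq:floer-triangle} (and the distance-two cobordism of \eqref{eq:CDX-triangle}) induces a knot-independent positive $\ell$-shift $C$, arising from the minimum $c_2$ on the ASD moduli space over the cobordism.  Iterating gives $\ell\!\left(S^3_{1/a}(K)\right) - \ell\!\left(S^3_{1/b}(K)\right) = C(a-b)$ for any non-trivial $K$ and integers $a,b$; substituting into the displayed identity yields $C\sum_i(a_i - b_i) = 0$, contradicting $\sum_i(a_i - b_i) > 0$.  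The hard part will be establishing this quantitative refinement of \eqref{eq:ell}: mere monotonicity is not sufficient, and one must identify the $\ell$-shift per unit surgery slope as a universal $K$-independent constant by carefully tracking the Chern--Simons jumps across the relevant surgery cobordisms.
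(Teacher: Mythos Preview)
Your reduction to the case that every $K_i$ is non-trivial via Gordon--Luecke is correct, as is the cyclic-sum identity
\[
\sum_{i=1}^n \left[\ell\bigl(S^3_{1/a_i}(K_i)\bigr) - \ell\bigl(S^3_{1/b_i}(K_i)\bigr)\right] = 0.
\]
The genuine gap is the proposed ``quantitative refinement.'' The assertion that $\ell\bigl(S^3_{1/a}(K)\bigr) - \ell\bigl(S^3_{1/b}(K)\bigr) = C(a-b)$ for a \emph{knot-independent} constant $C$ is not established anywhere in the paper and there is no reason to expect it to be true. The strict inequalities in \eqref{eq:ell} come from the level shifts $-\eta(W)$ and $1/4-\eta(K)$ of Propositions~\ref{prop:IP-morphism} and~\ref{prop:pm-one-map}, where $\eta$ is the minimal topological energy of an ASD connection on the relevant cobordism. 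These quantities depend in an essential way on the flat $SU(2)$ representations of the particular surgered manifolds, and hence on the knot and the slope; nothing in the construction forces them to be equal across knots, let alone to produce an exactly linear dependence of $\ell$ on the slope parameter. Without this linearity your two displayed facts do not combine to a contradiction, since some of the individual terms $a_i - b_i$ may be negative.

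The paper's argument avoids this entirely by using only strict monotonicity, applied term by term rather than after summing. Each knot $K_j$ occurs with two slopes, as $M_{j-1} = S^3_{1/b_j}(K_j)$ and $M_j = S^3_{1/a_j}(K_j)$. Reading the hypothesis so that the two slopes on the \emph{same} knot satisfy $a_j > b_j$ (as in the motivating example $a_j = 1 > -1 = b_j$), the chain \eqref{eq:ell} gives $\ell(M_j) > \ell(M_{j-1})$ for every $j$, and running this around the cycle yields
\[
\ell(M_1) > \ell(M_n) > \cdots > \ell(M_2) > \ell(M_1),
\]
which is impossible. So mere monotonicity \emph{is} sufficient: the point is to chain strict inequalities directly rather than first summing them, which throws away exactly the sign information you need.
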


\begin{remark}
It is natural to wonder whether our arguments can be applied to the case of $\pm 2$ surgery. Even though $S^3_{\pm 2}(K)$ are not integer homology spheres, these still have a well-defined filtered irreducible instanton homology $I_*(S^3_{\pm 2}(K))$, and one can define invariants $\ell(Y_{\pm 2})$. We have $\ell(S^3_{-1}(K)) < \ell(S^3_{-2}(K))$ and $\ell(S^3_2(K)) < \ell(S^3_1(K))$, but our initial investigations suggest that it is more difficult to compare the values $\ell(S^3_{\pm 2}(K))$, even though these manifolds still have isomorphic Floer homology. In addition, while there exist variations on Theorem \ref{thm:CDX} relating the instanton homologies of $\pm 2$ surgery, the variations known to the authors at the time of writing are all inadequate in some way: for one variation, the complex replacing $C_*(S^3) \oplus C_*(S^3)$ is no longer trivial so that the map $g_1$ is no longer a chain map; for another variation, this complex is trivial and $g_1$ is a chain map, but it behaves unfavorably with respect to the Chern--Simons filtration. 
\end{remark}

\section*{Organization}
In Section 2, we review the basics of persistent homology, including a variation in the instanton setting. We then introduce the invariant $\ell$ used above and verify its basic properties. In Section 3, we recall the necessary background material on filtered instanton Floer homology, and state the new results relating to the distance-two exact triangle. In Section 4 we prove Theorems \ref{thm:S2timesS1} and \ref{thm:1/n}, taking for granted the distance-two exact triangle of Theorem \ref{thm:CDX}; we establish the existence of this exact triangle in Section 5, which constitutes about half of the paper. The final Section 6 gives a short proof that if $K$ admits cosmetic surgeries then $\Delta_K(t) = 1$.

\section*{Acknowledgments}
We gladly thank Josh Wang for some critical discussions from which this paper was developed and Peter Kronheimer for pointing out a crucial error in a primitive version of this strategy.  We also thank Dave Futer and Jonathan Hanselman for helpful comments.

\section{Filtered groups and numerical invariants}\label{sec:kappa}

Real-valued lifts of the Chern--Simons functional have been used extensively to define filtered algebraic structures and extract numerical invariants of $3$-manifolds, and these numerical invariants have had extensive topological applications \cite{CS1,CS2,Daemi-Gamma,NST}. The Chern–Simons filtration is fundamental to the results of this paper, and we will use the framework of persistent homology to define the associated algebraic structure.

A \emph{persistence module} $V_\bullet$ is a collection of $R$-modules $F_r V$ indexed by $r \in \Bbb R \cup \{\infty\}$, together with connecting homomorphisms $i_r^{r'}: F_r V \to F_{r'} V$ when $r \le r'$ which are functorial in the sense that $i_{r'}^{r''} i_{r}^{r'} = i_r^{r''}$ \cite{PM1,PM2}. The basic example is the \emph{interval module}, and over a field every persistence module decomposes into these basic pieces:

\begin{example}
Suppose $I \subset \Bbb R \cup \{\infty\}$ is an interval. The \emph{interval module} $R_I$ has $$F_r R_I = \begin{cases} R & r \in I \\ 0 & \text{otherwise}, \end{cases}$$ 
with connecting homomorphisms $i_r^{r'}$ equal to the identity if $r, r' \in I$ and equal to zero otherwise. 
\end{example}

\begin{theorem}[\cite{PM3}]
If $V$ is a persistence module over a field $k$ for which $F_r V$ is finite-dimensional for all $r$, then there exist a collection of intervals $I_1, I_2, \ldots$ and an isomorphism of persistence modules $V \cong \bigoplus_{j} k_{I_j}$. 
\end{theorem}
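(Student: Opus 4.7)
The plan is to reduce to two steps: (i) establish a Krull--Schmidt-type decomposition of any pointwise finite-dimensional persistence module into indecomposables, and (ii) identify each indecomposable as an interval module $k_I$.

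For step (i), I would verify that the endomorphism ring of an indecomposable pointwise finite-dimensional persistence module is local, so that an Azumaya-style argument applies. To produce the decomposition itself, I would invoke Zorn's lemma: among families $\{W_j\}$ of indecomposable submodules whose internal direct sum embeds as a summand of $V$, pick a maximal such family, and use pointwise finite-dimensionality together with the locality of endomorphism rings of indecomposables to conclude that the residual complement is zero.

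For step (ii), fix an indecomposable $V$ and let $S = \{r : F_r V \neq 0\}$. First I would show $S$ is an interval: if there were $r_1 < r_2 < r_3$ in $\Bbb R$ with $r_1, r_3 \in S$ but $F_{r_2} V = 0$, then no connecting map can cross $r_2$ and so $V$ splits as $V_{\le r_2} \oplus V_{> r_2}$, contradicting indecomposability. Next, starting from some $r_0 \in S$ and iteratively splitting off summands whenever a connecting map $i_r^{r'}$ fails to be an isomorphism in the appropriate sense, I would conclude that $V \cong k_S$. One must keep careful track of which endpoints of $S$ are included in the interval, dictated by whether suitable inverse/direct limits of $F_r V$ along $r \in S$ vanish.

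The main obstacle will be making the iterative splitting argument in step (ii) precise across the uncountable index set $\Bbb R$, since naively one must perform uncountably many splittings. A cleaner organization, which is likely what the cited reference uses, is the barcode formulation: for each pair $r \le s$, define an inclusion-exclusion count $\beta(r,s)$ involving the ranks of $i_r^s$ and its one-sided perturbations $i_{r-\epsilon}^s$, $i_r^{s+\delta}$, $i_{r-\epsilon}^{s+\delta}$ in the limits $\epsilon, \delta \to 0^+$; these limits stabilize thanks to pointwise finite-dimensionality. The multiset of pairs $(r,s)$ with $\beta(r,s) > 0$, together with analogous expressions handling half-open and unbounded intervals, produces a candidate list of intervals $\{I_j\}$, and one then checks that $V$ and $\bigoplus_j k_{I_j}$ induce the same rank function on every connecting map. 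Since a pointwise finite-dimensional persistence module over a field is determined up to isomorphism by the ranks of its connecting maps, this yields the desired decomposition.
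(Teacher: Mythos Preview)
The paper does not prove this theorem at all: it is stated with a citation \cite{PM3} and used as background, with no proof given. So there is nothing in the paper to compare your proposal against.

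That said, a brief comment on the proposal itself. Your two-step outline (Krull--Schmidt via local endomorphism rings, then classification of indecomposables as interval modules) is the standard architecture of the Crawley--Boevey proof, and the interval argument for the support $S$ is fine. However, your alternative ``cleaner organization'' in the last paragraph is circular as written: you propose to read off the barcode from rank data, build $\bigoplus_j k_{I_j}$, and then conclude $V \cong \bigoplus_j k_{I_j}$ because ``a pointwise finite-dimensional persistence module over a field is determined up to isomorphism by the ranks of its connecting maps.'' That last fact is usually \emph{deduced from} the interval decomposition theorem, so invoking it here assumes what you are trying to prove. If you want to go the rank-function route you would need an independent argument that equal rank functions force isomorphism, which is essentially as much work as the original classification of indecomposables. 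The honest path is to stick with step (ii) and carry out the splitting carefully (e.g.\ by first showing each $F_r V$ is one-dimensional via indecomposability, which removes the ``uncountably many splittings'' worry).
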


The collection of intervals in a direct sum decomposition $V_\bullet \cong \bigoplus_{j} k_{I_j}$ is well-defined, and called the \emph{barcode} of $V$. Notice that the statement above does not imply that there are finitely many intervals in the barcode for $V$, only that each $r \in \Bbb R$ lies in finitely many of these intervals. 

In the instanton theory, the filtration has a natural $8$-periodicity, so we introduce the closely related notion of \emph{instanton persistence modules} or $IP$-modules. 

\begin{definition}\label{def:I-module}
An \textbf{$IP$-module} $A_\bullet$ over the ring $R$ consists of the following data.
\begin{enumerate}[label=(\roman*)]
\item For each integer $d$ and each $r \in \Bbb R \cup \{\infty\}$, an $R$-module $F_r A_d$, and an isomorphism $\varphi_{r,d}: F_r A_d \to F_{r+1} A_{d+8}$. 
\item For each integer $d$ and each $r \le r'$, a homomorphism $i_r^{r'}: F_r A_d \to F_{r'} A_d$. 
\end{enumerate}
We demand that these satisfy the following conditions:
\begin{enumerate}[label=(\alph*)]
\item The map $i_r^r$ is the identity, $i_{r'}^{r''} i_{r}^{r'} = i_{r}^{r''}$, and $\varphi_{r', d} i_r^{r'} = i_{r+1}^{r'+1} \varphi_{r,d}$.
\item For each $d$, we have $F_r A_d = 0$ for sufficiently small $r \in \Bbb R$, while the map $i_r^\infty: F_r A_d \to F_\infty A_d$ is an isomorphism for sufficiently large $r \in \Bbb R$.
\end{enumerate}
\end{definition}

\noindent When there is no risk of confusion, we will often write $A_d$ in place of $F_\infty A_d$. 

In the case of an $IP$-module $V_\bullet$ over a field $k$ with each $F_r V_d$ finite-dimensional, so that $V_d \cong \bigoplus_j k_{I_{j,d}}$, the periodicity property is essentially equivalent to the statement that the barcode in degree $d+8$ is a shift of the barcode in degree $d$. More precisely, the number of intervals $I_{j,d}$ and $I_{j,d+8}$ is the same, and each interval $I_{j,d+8}$ of $V_{d+8}$ is the shift $I_{j,d} + 1$ of a corresponding interval for $V_d$.

We will only use fairly crude numerical invariants, which ignore much of the structure of an $IP$-module.

\begin{definition}\label{def:kappa}
If $A_\bullet$ is an $IP$-module, define $\kappa_A: \Bbb Z \to [-\infty, \infty]$ and $\ell(A_\bullet) \in [-\infty, \infty]$ as 
\begin{align*}\kappa_{A_\bullet}(d) &= \inf \{r \in \Bbb R \mid i_r^{\infty} : F_r A_d \to A_d \text{ is nonzero}\}, \\
\ell(A_\bullet) &= \inf \{\kappa_A(d) - d/8 \mid d \in \Bbb Z\}.\end{align*}
\end{definition}

We will usually write $\kappa_A$ in place of $\kappa_{A_\bullet}$ to simplify notation. These are transparently invariant under $IP$-module isomorphisms (isomorphisms $F_r A_d \cong F_r B_d$ which commute with the structure maps). If we work over a field $k$ and $A_\bullet$ is a direct sum of interval modules, then $\kappa_A(d)$ is the least $r$ such that $[r, \infty]$ appears in the barcode of $A_d$, while $\ell(A_\bullet)$ is the least $r$ such that $[r+d/8, \infty]$ appears in the barcode of $A_d$ for some $d$.

\begin{lemma}\label{lemma:kappa-basic}
The $\kappa$ and $\ell$ invariants satisfy the following properties.
\begin{enumerate}[label=(\alph*)]
\item We have $\kappa_A(d) > -\infty$ for all $d$, and $\kappa_A(d) = \infty$ if and only if $A_d = 0$. 
\item We have $\kappa_A(d+8) = \kappa_A(d) + 1$. 
\item We have $\ell(A_\bullet) > -\infty$, and $\ell(A_\bullet) = \infty$ if and only if $A_d = 0$ for all $d$.
\end{enumerate}
\end{lemma}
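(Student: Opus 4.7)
The plan is to dispatch the three parts of Lemma~\ref{lemma:kappa-basic} in order, each following rapidly from the axioms in Definition~\ref{def:I-module}. The argument has no substantial obstacle: the only subtle point is the interpretation of the periodicity isomorphism $\varphi_{r,d}$ at $r=\infty$, which is what powers part (b).

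For part (a), the first step is to invoke axiom (b) of Definition~\ref{def:I-module}: $F_r A_d = 0$ for $r$ sufficiently small. For such $r$ the map $i_r^\infty$ has trivial domain and is therefore zero, so the set appearing in the definition of $\kappa_A(d)$ is bounded below, giving $\kappa_A(d) > -\infty$. For the dichotomy in the second claim, if $A_d = 0$ then every $i_r^\infty$ has trivial codomain and is zero, so the defining set is empty and $\kappa_A(d) = \infty$ by convention; conversely, if $A_d \neq 0$, the second half of axiom (b) produces an isomorphism $i_r^\infty$ for large $r$, which is in particular nonzero, so $\kappa_A(d)$ is finite.

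For part (b), I would specialize the compatibility condition $\varphi_{r',d}\, i_r^{r'} = i_{r+1}^{r'+1}\, \varphi_{r,d}$ to $r' = \infty$, using the isomorphism $\varphi_{\infty,d}: A_d \to A_{d+8}$ provided by Definition~\ref{def:I-module}(i) (with the convention $\infty + 1 = \infty$). This sandwiches $i_r^\infty$ between the two isomorphisms $\varphi_{r,d}$ and $\varphi_{\infty,d}$, so $i_r^\infty$ is nonzero if and only if $i_{r+1}^\infty$ is nonzero; taking infima yields $\kappa_A(d+8) = \kappa_A(d)+1$.

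Part (c) is then a bookkeeping consequence of (a) and (b). Indeed, (b) implies that $d \mapsto \kappa_A(d) - d/8$ is $8$-periodic, so the infimum defining $\ell(A_\bullet)$ is actually a minimum over a residue system modulo $8$. Each of these finitely many values is either a real number or $+\infty$ by (a), so the minimum lies in $\Bbb R \cup \{+\infty\}$, giving $\ell(A_\bullet) > -\infty$. Finally, $\ell(A_\bullet) = \infty$ iff every $\kappa_A(d) = \infty$ iff every $A_d = 0$, again by (a).
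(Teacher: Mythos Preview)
Your proof is correct and follows essentially the same route as the paper: part (a) from axiom (b), part (b) from the compatibility relation between $\varphi$ and $i$, and part (c) from $8$-periodicity of $\kappa_A(d)-d/8$. The only cosmetic difference is that in (b) you argue directly at the level of maps (sandwiching $i_r^\infty$ between two isomorphisms), whereas the paper works with explicit elements and an $\epsilon$-argument to obtain each inequality separately; your version is a clean shortcut but not a different idea.
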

\begin{proof}
The first claim follows immediately from axiom (b) of an $IP$-module. 
For the second claim, we may assume $\kappa_A(d) = r < \infty$. Suppose $x \in F_{r+\epsilon} A_d$ has $i_{r+\epsilon}^{\infty} (x) = y\ne 0$. Then by axiom (a), we have $$i_{r+1+\epsilon}^{\infty} \varphi_{r+\epsilon, d}(x) = \varphi_{\infty, d} i_{r + \epsilon}^{\infty}(x) = \varphi_{\infty,d}(y) \ne 0.$$ The last inequality holds because $y \ne 0$ and $\varphi_{\infty, d}$ is an isomorphism. Because $\varphi_{r+\epsilon, d}(x) \in F_{r+1+ \epsilon} A_{d+8}$, we have $\kappa_A(d+8) \le \kappa_A(d) + 1 + \epsilon$ for all $\epsilon > 0$, and hence that $\kappa_A(d+8) \le \kappa_A(d) + 1$. The other inequality is similar.
For the last claim, observe that $\kappa_A(d) - d/8$ is an $8$-periodic function by the second claim, and hence takes on only finitely many values. The minimum of these values is infinite if and only if $\kappa_A(d) = \infty$ for all $d$, so the result follows from the first claim.
\end{proof}

Next, we introduce the type of morphisms relevant to us.

\begin{definition}
Suppose $A_\bullet$ and $B_\bullet$ are $IP$-modules. An \textbf{$IP$-morphism $f: A_\bullet \to B_\bullet$ of degree $D$ and level $L$} is a collection of homomorphisms $f_{r,d}: F_r A_d \to F_{r+L} B_{d+D}$ satisfying $$f_{r',d} i_r^{r'} = i_{r+L}^{r'+L} f_{r,d}, \quad \quad f_{r+1,d+8} \varphi_{r,d} = \varphi_{r+L,d+D} f_{r,d}.$$
\end{definition}

\begin{lemma}\label{lemma:kappa-ineq}
Suppose $f: A_\bullet \to B_\bullet$ is an $IP$-morphism of degree $D$ and level $L$. 
\begin{enumerate}[label=(\alph*)]
\item If $f_{\infty, d}: A_d \to B_{d+D}$ is injective, then $\kappa_B(d+D) \le \kappa_A(d) + L$.
\item If $f_{\infty, d}$ is injective for all $d$, then $\ell(B_\bullet) \le \ell(A_\bullet) + (L - D/8).$
\end{enumerate}
\end{lemma}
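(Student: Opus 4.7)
My plan is to prove part (a) directly from the definitions and then deduce part (b) as a formal consequence using the periodicity established in Lemma~\ref{lemma:kappa-basic}(b).

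For part (a), I would set $r = \kappa_A(d)$, which is finite by hypothesis (if $r = \infty$ then $A_d = 0$ and injectivity would force $\kappa_B(d+D) = \infty$ as well, making the inequality vacuous). For any $\epsilon > 0$, the definition of $\kappa_A$ provides an element $x \in F_{r+\epsilon} A_d$ with $y := i_{r+\epsilon}^{\infty}(x) \ne 0$ in $A_d$. Since $f_{\infty,d}$ is injective, $f_{\infty,d}(y) \ne 0$. The key step is then to apply the commutation relation $f_{\infty,d} \, i_{r+\epsilon}^{\infty} = i_{r+\epsilon+L}^{\infty} f_{r+\epsilon, d}$, which yields $i_{r+\epsilon+L}^{\infty}(f_{r+\epsilon,d}(x)) = f_{\infty,d}(y) \ne 0$. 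Hence $f_{r+\epsilon,d}(x) \in F_{r+\epsilon+L} B_{d+D}$ is a witness showing $\kappa_B(d+D) \le r + \epsilon + L$. Letting $\epsilon \to 0$ gives $\kappa_B(d+D) \le \kappa_A(d) + L$.

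For part (b), I would use that by Lemma~\ref{lemma:kappa-basic}(b), the function $d \mapsto \kappa_A(d) - d/8$ is $8$-periodic, hence takes only finitely many values, so its infimum $\ell(A_\bullet)$ is attained at some $d_0 \in \Bbb Z$ (assuming $A_\bullet \ne 0$; otherwise the statement is again vacuous). Applying part (a) at $d_0$ and subtracting $(d_0+D)/8$ from both sides gives
\[ \kappa_B(d_0+D) - (d_0+D)/8 \;\le\; \kappa_A(d_0) - d_0/8 + L - D/8 \;=\; \ell(A_\bullet) + (L - D/8). \]
Taking the infimum on the left over all degrees yields $\ell(B_\bullet) \le \ell(A_\bullet) + (L - D/8)$, as desired.

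There is no real obstacle here; the argument is a diagram chase using the two commutation relations in the definition of an $IP$-morphism, together with the periodicity axiom. The only points requiring care are handling the edge cases in which $\kappa_A(d) = \infty$ or $A_\bullet = 0$ (both vacuous) and correctly letting $\epsilon \to 0$ rather than trying to directly exhibit an element at level exactly $\kappa_A(d)$, since the infimum need not be achieved at a single element.
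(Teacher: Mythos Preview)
Your proof is correct and follows essentially the same approach as the paper. One minor slip: in the vacuous case $\kappa_A(d) = \infty$, injectivity of a map out of $A_d = 0$ does not force $\kappa_B(d+D) = \infty$; the inequality is vacuous simply because anything is $\le \infty$.
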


\begin{proof}
If $\kappa_A(d) = \infty$ the first claim is vacuous, so suppose $\kappa_A(d) = r < \infty$, and that $x \in F_{r+\epsilon} A_d$ has $i_{r +\epsilon}^{\infty}(x) \ne 0$ for $\epsilon > 0$. Setting $y = f_{r+\epsilon, d}(x) \in F_{r+\epsilon+L} B_{d+D}$, we have $$i_{r+\epsilon + L}^{\infty}(y) = f_{\infty, d} i_{r+\epsilon}^{\infty}(x) \ne 0,$$ as $f_{\infty, d}$ is injective. It follows that $\kappa_B(d+D) \le \kappa_A(d) + L + \epsilon$ where $\epsilon$ can be arbitrarily small. This proves the first claim.

As for the second claim, we have 
\begin{align*}\ell(B_\bullet) &= \inf \{\kappa_B(d+D) - d/8 - D/8 \mid d \in \Bbb Z\}  \\
&\le \inf \{\kappa_A(d) - d/8 + L - D/8 \mid d \in \Bbb Z\} = \ell(A_\bullet) + (L-D/8). \qedhere\end{align*}
\end{proof}

\begin{remark}
The above statement also has an interpretation in terms of barcodes when working over a field. If $f_{r',d}$ is injective, and $[r, r']$ is an interval in the barcode of $A_d$, then $[r+L, r'+L]$ is \emph{contained} in an interval in the barcode of $B_{d+D}$. Lemma \ref{lemma:kappa-ineq}(a) is the case $r' = \infty$. 
\end{remark}

We will later need a mild generalization of the above claim. 

\begin{lemma}\label{lemma:inhomog-ineq}
Suppose $A,B$ are $IP$-modules, and $f^i: A_\bullet \to B_\bullet$ is an $IP$-morphism of degree $D_i$ and level $L_i$ for $i = 1, 2$. If the map $(f^1_\infty, f^2_\infty): A_d \to B_{d+D_1} \oplus B_{d+D_2}$ is injective for all $d$, then we have $$\ell(B_\bullet) \le \ell(A_\bullet) + \max(L_1 - D_1/8, L_2 - D_2/8).$$
\end{lemma}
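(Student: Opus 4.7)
The plan is to mimic the proof of Lemma \ref{lemma:kappa-ineq}(b), but account for the fact that joint injectivity of $(f^1_\infty, f^2_\infty)$ is weaker than injectivity of each $f^i_\infty$ individually: we only learn that at least one component of the image is nonzero, not which one.

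First, if $\ell(A_\bullet) = \infty$ the inequality is vacuous, so assume $\ell(A_\bullet) < \infty$ and pick a degree $d$ with $\kappa_A(d) = r < \infty$. For any $\epsilon > 0$, choose $x \in F_{r+\epsilon} A_d$ with $i_{r+\epsilon}^{\infty}(x) \ne 0$. Set $y_i = f^i_{r+\epsilon, d}(x) \in F_{r+\epsilon + L_i} B_{d+D_i}$. The intertwining relation $f^i_{\infty, d} \circ i_{r+\epsilon}^{\infty} = i_{r+\epsilon + L_i}^{\infty} \circ f^i_{r+\epsilon, d}$ shows that
\[
\bigl( i_{r+\epsilon+L_1}^{\infty}(y_1),\, i_{r+\epsilon+L_2}^{\infty}(y_2) \bigr) = \bigl( f^1_{\infty, d},\, f^2_{\infty, d} \bigr)\bigl( i_{r+\epsilon}^\infty(x) \bigr),
\]
which is nonzero by the joint injectivity hypothesis. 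Hence for some $i = i(d,\epsilon) \in \{1, 2\}$ we have $i_{r+\epsilon+L_i}^{\infty}(y_i) \ne 0$, so $\kappa_B(d + D_i) \le r + L_i + \epsilon$.

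The key observation — this is where the weaker hypothesis comes in — is that we do not need a uniform choice of $i$ across all $d$; since $\ell(B_\bullet)$ is defined as an infimum over \emph{all} degrees, it suffices that for each $d$ some degree $d + D_i$ gives us the desired control. Concretely, dividing by $8$ and rearranging,
\[
\ell(B_\bullet) \le \kappa_B(d + D_i) - (d+D_i)/8 \le \bigl( \kappa_A(d) - d/8 \bigr) + (L_i - D_i/8) + \epsilon,
\]
and taking the max over $i \in \{1,2\}$ gives a bound independent of the unknown choice of $i$. Letting $\epsilon \to 0$ and then taking infimum over $d$ yields the claimed inequality.

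The main (minor) obstacle is exactly this bookkeeping: in Lemma \ref{lemma:kappa-ineq}(a) one gets a clean degree-by-degree comparison of $\kappa$ values, but here one only gets a conclusion about \emph{one} of two possible target degrees, and it is important that $\ell$, being an infimum over all degrees, absorbs this ambiguity by passing to the maximum of the two level shifts $L_i - D_i/8$. Periodicity (Lemma \ref{lemma:kappa-basic}(b)) plays no further role beyond ensuring that $\ell$ is finite whenever $A_\bullet$ is nonzero, so the argument is really just a careful invocation of the structure maps and the definition of $\ell$.
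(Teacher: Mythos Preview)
Your proof is correct and follows essentially the same approach as the paper. The only cosmetic difference is the order of quantifiers: the paper sets $r = \ell(A_\bullet)$ and, for each $\epsilon$, directly chooses a degree $d$ and an element $x \in F_{r+\epsilon+d/8} A_d$ witnessing that $\kappa_A(d) - d/8 < r + \epsilon$, whereas you fix an arbitrary $d$ with $\kappa_A(d) < \infty$, derive the bound $\ell(B_\bullet) \le (\kappa_A(d) - d/8) + \max(L_i - D_i/8)$, and take the infimum over $d$ at the end. Both routes arrive at the same inequality by the same mechanism.
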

\begin{proof}
If $\ell(A_\bullet) = r$, for all $\epsilon > 0$ there exists some $d$ and $x \in F_{r + \epsilon + d/8} A_d$ so that $i_{r+\epsilon+d/8}^{\infty}(x) \ne 0$. Because $(f^1_\infty, f^2_\infty)$ is injective, for some $i \in \{1,2\}$ we have $$0 \ne f^i_\infty i_{r + \epsilon + d/8}^{\infty}(x) = i_{r+\epsilon+d/8+L_i}^{\infty} f^i_{r+\epsilon+d/8}(x).$$ As $f^i_{r+\epsilon + d/8}(x) \in F_{r + \epsilon + d/8 + L_i} B_{d+D_i}$, this implies that 
\begin{align*}\ell(B_\bullet) &\le \kappa_B(d+D_i) - d/8 - D_i/8 \le (r + \epsilon + d/8 + L_i) - d/8 - D_i/8 \\
&= \ell(A_\bullet) + \epsilon + (L_i - D_i/8) \le \ell(A_\bullet) + \text{max}(L_1 - D_1/8, L_2 - D_2/8) + \epsilon.
\end{align*}
Taking $\epsilon$ to zero completes the argument.
\end{proof}

\section{Instanton Floer homology}\label{sec:instantons}

In this section, we review the aspects of instanton Floer theory necessary to prove our main results. At first, we let $R$ be an arbitrary commutative ring. Later we will specialize to $R = \Bbb F_2$.

\subsection{IP-modules from instantons} Some of the essential properties of instanton Floer homology for our purposes are collected in the following proposition.

\begin{proposition}\label{prop:Floer-def}
Floer's instanton homology groups satisfy the following properties. 
\begin{enumerate}[label=(\alph*)]
\item If $Y$ is an integer homology sphere, there is an associated $\Bbb Z/8$-graded module $I_d(Y;R)$, invariant under orientation-preserving diffeomorphism and functorial under cobordisms $(W,c): Y \to Y'$ with $b_1(W) = b^+(W) = 0$, where $c \subset W$ is a closed, oriented, embedded surface.
\item If $Y$ is a homology $S^2 \times S^1$, there is an associated $\Bbb Z/4$-graded module $I_d^w(Y;R)$, again invariant under orientation-preserving diffeomorphism. 
\item We have $I_d(S^3;R) = 0$ and $I_d^w(S^2 \times S^1;R) = 0$ for all $d$. 
\end{enumerate}
\end{proposition}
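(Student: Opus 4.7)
The plan is to construct $I_d(Y;R)$ and $I_d^w(Y;R)$ via Floer's classical framework and then verify each clause in turn; this proposition is really a summary of foundational results rather than a new theorem.

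For (a), I will start from the space of $SU(2)$ connections modulo gauge on the trivial bundle over an integer homology sphere $Y$, and consider the Chern--Simons functional, whose critical set consists of gauge equivalence classes of flat connections. After a small holonomy perturbation the irreducible critical points become non-degenerate and generate a free $R$-module; the differential counts rigid flow lines of the perturbed gradient flow, i.e.\ anti-self-dual (ASD) connections on $\R \times Y$ with prescribed limits. The grading is the spectral flow from the trivial connection, well-defined modulo $8$ because the index of the ASD operator on a closed $4$-manifold carrying an $SU(2)$-bundle jumps by $8$ per unit of instanton number. Floer's invariance theorem gives independence of perturbation and metric, hence the claimed diffeomorphism invariance of $I_d(Y;R)$. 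For functoriality, a cobordism $(W,c):Y\to Y'$ with $b_1(W)=b^+(W)=0$ admits no reducible ASD connections on the trivial bundle, so counting irreducible ASD connections coupled with holonomy insertions along the surface $c$ produces a chain map; parameterised moduli arguments then show independence of auxiliary choices.

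For (b), I will fix an admissible $SO(3)$-bundle $E \to Y$ on a homology $S^2 \times S^1$, meaning $w := w_2(E)$ pairs non-trivially with a generator of $H_2(Y;\Bbb Z/2)$. The key point is that this admissibility forces every flat connection on $E$ to be irreducible and excludes reducibles on cobordisms equipped with the corresponding bundle extension, so the construction from (a) goes through with $E$ in place of the trivial bundle and yields $I_d^w(Y;R)$. The grading drops from $\Bbb Z/8$ to $\Bbb Z/4$ because once $w_2$ is fixed and nonzero, $p_1(E)$ lies in a coset of $4\Bbb Z$ rather than $8\Bbb Z$, so the minimal index jump from adding an instanton becomes $4$ rather than $8$.

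For (c), I observe that $\pi_1(S^3) = 0$, so the only flat $SU(2)$ connection on $S^3$ is the trivial one, which is reducible; the chain complex defining $I_d(S^3;R)$ is therefore empty and the group vanishes. For $(S^2\times S^1, w)$ with $w$ non-trivial, any flat connection on $E$ would restrict trivially to a sphere fibre $S^2 \times \{\pt\}$ because that sphere is simply connected, and this contradicts $\langle w, [S^2 \times \{\pt\}]\rangle \ne 0$; hence there are no flat connections on $E$ and $I_d^w(S^2\times S^1;R) = 0$. The only conceptually substantive step is this admissibility analysis in (b) and (c); everything else is a direct quotation of Floer's original $SU(2)$ construction and its $SO(3)$ variant in the literature. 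The main obstacle in writing the proof up carefully is just matching the grading, orientation, and perturbation conventions with those used elsewhere in the paper, which is routine bookkeeping.
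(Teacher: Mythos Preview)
Your proposal is correct and follows the same approach as the paper, which simply cites Floer's original work \cite{Floer-ZHS, Floer-knot} and observes that the vanishing in (c) holds because neither manifold supports irreducible (projectively) flat connections on the relevant bundle. One small correction: the surface $c$ in part (a) does not enter via ``holonomy insertions'' but rather specifies a $U(2)$-bundle $E_c$ with $c_1(E_c) = PD(c)$ on which one counts projectively ASD connections, and the trivial bundle on $W$ \emph{does} carry the reducible trivial connection---the point is that index considerations keep reducibles out of the relevant low-dimensional moduli spaces when $b^+(W)=0$.
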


\begin{proof} For integer homology spheres see \cite{Floer-ZHS}. For the case that $Y$ is a homology $S^2 \times S^1$ see \cite{Floer-knot}; this is the instanton Floer homology of $Y$ equipped with a $U(2)$-bundle with odd first Chern class. This bundle is a {\it non-trivial admissible bundle}, which supports no reducible connections. The vanishing result follows because neither $S^3$ nor $S^2 \times S^1$ (with the corresponding non-trivial $U(2)$ bundle) admits irreducible projectively flat $U(2)$-connections.
\end{proof}
\noindent More general functoriality statements hold, but we will not need them. We will also suppress the base ring $R$ from notation.

In the case of integer homology spheres, these groups have a natural enrichment to $IP$-modules.

\begin{proposition}\label{prop:IP-mod}
If $Y$ is an integer homology sphere, the graded module $I_*(Y)$ may be enriched with the structure of an $IP$-module $I_\bullet(Y)$, invariant under orientation-preserving diffeomorphism.
\end{proposition}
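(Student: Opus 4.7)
The plan is to construct a filtered chain complex at the cochain level using the Chern--Simons functional and then pass to homology.

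First I would recall the standard setup. Working on the space $\mathcal{B}^*(Y) = \mathcal{A}(Y)/\mathcal{G}_0(Y)$ of irreducible $SU(2)$-connections modulo \emph{degree zero} gauge transformations, the Chern--Simons functional $CS \colon \mathcal{B}^*(Y) \to \mathbb{R}$ is single-valued. The quotient $\mathcal{G}(Y)/\mathcal{G}_0(Y) \cong \mathbb{Z}$ acts on $\mathcal{B}^*(Y)$ so that a generator $\tau$ shifts $CS$ by $+1$ and the absolute $\mathbb{Z}$-grading by $+8$. After fixing a small holonomy perturbation $\pi$ ensuring transversality, the critical set of $CS_\pi$ is a finite union of $\mathbb{Z}$-orbits, and in any fixed grading $d$ there are only finitely many critical points. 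Let $CI_*(Y;\pi)$ denote the resulting $\mathbb{Z}$-graded chain complex over $R$, whose differential counts index-$1$ perturbed instanton trajectories.

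The key input is the energy/Chern--Simons identity $E(A) = 8\pi^2(CS(\alpha) - CS(\beta))$ for a trajectory from $\alpha$ to $\beta$, which shows the differential strictly lowers $CS$. Consequently, for each $r \in \mathbb{R} \cup \{\infty\}$ the $R$-span $F_r CI_d(Y;\pi)$ of critical points $\alpha$ with $\gr(\alpha) = d$ and $CS(\alpha) \le r$ is a subcomplex. Setting $F_r I_d(Y) = H_d(F_r CI_*(Y;\pi))$, the inclusions $F_r CI_* \hookrightarrow F_{r'} CI_*$ induce the connecting homomorphisms $i_r^{r'}$, and the $\tau$-action induces an honest chain-level isomorphism $F_r CI_d \xrightarrow{\cong} F_{r+1} CI_{d+8}$, defining $\varphi_{r,d}$. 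Axiom (a) of Definition \ref{def:I-module} is then immediate from these chain-level statements. For axiom (b), since each fixed grading $d$ contains only finitely many critical points, $F_r CI_d$ vanishes for $r$ sufficiently small and equals $CI_d$ for $r$ sufficiently large.

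It remains to prove independence of the metric and perturbation. For two choices of perturbation data $(g_0,\pi_0),(g_1,\pi_1)$, the standard continuation chain map induced by an interpolating path of data on $Y \times \mathbb{R}$ is filtered with level bounded by the total energy variation along the path, and it is a filtered chain homotopy equivalence by the usual reversal argument. By taking the interpolating data to be sufficiently close, this level can be made arbitrarily small; a standard neck-stretching/refinement procedure then produces IP-module isomorphisms between the homologies assigned to different choices. Invariance under orientation-preserving diffeomorphism follows from the induced CS-preserving isomorphism of configuration spaces.

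The main obstacle is the last step: ensuring that what the continuation maps naturally produce, namely $\delta$-interleaving equivalences of filtered complexes for every $\delta>0$, can be promoted to bona fide IP-module isomorphisms of the homologies in the sense of Definition \ref{def:I-module}. This relies on carefully comparing filtered complexes across the space of perturbations using the cited framework for the filtered theory \cite{Daemi-Gamma, NST}; the invariants $\kappa$ and $\ell$ defined in Section \ref{sec:kappa} are insensitive to arbitrarily small filtration shifts, so even under a slightly weaker equivalence relation the numerical invariants extracted in the sequel are well-defined.
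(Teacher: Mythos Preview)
Your chain-level construction of the filtered complex is essentially correct, and the periodicity and finiteness axioms follow as you say. The genuine gap is in the invariance step, and you have in fact identified it yourself in your final paragraph without resolving it.

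The issue is that you filter by the values of the \emph{perturbed} functional $CS_\pi$, so the groups $F_r I_d(Y)$ you produce depend on $\pi$. Continuation maps between two choices $(g_0,\pi_0)$ and $(g_1,\pi_1)$ are, as you observe, only $\delta$-interleavings for arbitrarily small $\delta>0$; they are not level-$0$ $IP$-isomorphisms. Your closing sentence retreats to saying that the numerical invariants $\kappa$ and $\ell$ are insensitive to small shifts, but the proposition asserts that the $IP$-module itself is an invariant, not merely those numbers, and later results (e.g.\ the strict inequalities for $\ell$) really do use an honest $IP$-module with no $\delta$-slack.

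The paper sidesteps this by a different device. One defines $F_r I_d(Y)$ only for $r$ \emph{not a critical value of the unperturbed Chern--Simons function}, as the degree-$d$ Floer homology of the sublevel set $\widetilde{\CS}^{-1}(-\infty,r]$; the point is that for such $r$ this homology is independent of the (sufficiently small) perturbation, by the analysis in \cite[Lemma~2.6]{NST}. Since the unperturbed critical set is finite modulo periodicity, one then extends to all $r$ by declaring the assignment to be right-continuous at critical values. This yields a perturbation-independent $IP$-module on the nose, with no interleaving argument required. If you want to salvage your approach, the missing ingredient is precisely this: explain why, for $r$ bounded away from the unperturbed critical values, the homology of your $F_r CI_*$ is actually independent of $\pi$ once $\pi$ is small enough, rather than merely $\delta$-close.
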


\begin{remark}
Keep in mind that $I_\bullet(Y)$ is the data of $F_r I_d(Y)$ for all $r \in \Bbb R \cup \{\infty\}$ and $d \in \Bbb Z$, together with connecting homomorphisms $i$ and periodicity maps $\varphi$. On the other hand, $I_*(Y)$ is the data of $I_d(Y)$ for all $d \in \Bbb Z/8$. In particular, $I_*(Y)$ is the direct sum of $F_\infty I_d(Y)$ for all $d\in \Bbb Z/8$. (Note that our assumptions on $IP$-modules imply that $F_\infty I_d(Y)$ depends only mod $8$ value of $d$.) In summary, $I_*(Y)$ is part of the information contained in $I_\bullet(Y)$.
\end{remark}

\begin{proof}
This claim is strictly weaker than the result of the main constructions of \cite{Daemi-Gamma,NST}. (The language of $IP$-modules is closer to the language used in \cite{NST}.) Both papers also pay close attention to the interaction with the reducible connection, which is irrelevant for our purposes. We briefly review the construction.  

Write $\mathcal B^*(Y)$ for the space of irreducible $SU(2)$-connections on the trivial bundle over $Y$, modulo gauge. This space is equipped with the Chern--Simons functional $\CS: \mathcal B^*(Y) \to \Bbb R/\Bbb Z$. The universal cover of this space, the space of irreducible $SU(2)$-connections modulo degree-zero gauge transformations, carries a $\Bbb Z$-periodic lift $\widetilde{\CS}: \widetilde{\mathcal B}^*(Y) \to \Bbb R$. A canonical choice of lift is determined by requiring that the natural extension to the space of \emph{all} $SU(2)$-connections modulo degree-zero gauge transformations has $\widetilde{\CS}(\theta) = 0$, where $\theta$ is the equivalence class of the trivial connection. 

So long as $r$ is not a critical value of the Chern--Simons function on $Y$, we define $F_r I_d(Y)$ to be the degree-$d$ Morse homology of the sublevel set $\widetilde{\CS}^{-1}(-\infty, r]$, with respect to an appropriately perturbed Chern--Simons functional. The degree $i(\alpha)$ of an irreducible connection $\alpha$ is defined to be the index of the (appropriately perturbed) ASD operator $D_A^+$ associated to a connection on $\Bbb R \times Y$ which is equal to $\alpha$ at $-\infty$ and the trivial connection $\theta$ at $+\infty$. 

For $r$ not a critical value, that this Morse homology is well-defined is given as \cite[Lemma 2.6]{NST} (when comparing, take $s = -\infty$). To simplify the definition of the $IP$-module, we extend the definition of $F_r I_d(Y)$ to include the critical values by demanding this assignment be right-continuous: for $r$ a critical value we set $F_r I_d(Y) = F_{r+\epsilon} I_d(Y)$ for sufficiently small $\epsilon > 0$.
\end{proof}

The functoriality of Proposition \ref{prop:Floer-def} extends to $IP$-morphisms.

\begin{proposition}\label{prop:IP-morphism}Suppose $W: Y \to Y'$ is a cobordism between integer homology spheres with $b_1(W) = b^+(W) = 0$, and $c \subset W$ is an embedded oriented surface. Then there is a constant $\eta(W, c) \ge 0$ which is strictly positive if $\pi_1(W) = 0$ and an induced $IP$-morphism $(W,c)_*: I_\bullet(Y) \to I_\bullet(Y')$ of degree $D = -2c^2$ and level $L = -c^2/4 - \eta(W,c)$, which enriches the cobordism maps of Proposition \ref{prop:Floer-def}(a). 
\end{proposition}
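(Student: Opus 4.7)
The plan is to construct $(W, c)_*$ via the standard Floer--Donaldson machinery, enhanced with the Chern--Simons filtration exactly as in \cite{Daemi-Gamma, NST}: the chain-level map is defined by signed counts of rigid ASD connections on an $SO(3)$-bundle $E \to W$ determined by $c$, and the filtered structure comes from a Chern--Weil energy identity. Specifically, take $E$ to be the $SO(3)$-bundle with $w_2(E) \equiv \mathrm{PD}[c] \pmod 2$ and relative Pontryagin number $p_1(E) = c^2$, for example realized as the adjoint of the $U(2)$-bundle $\mathcal L \oplus \underline{\mathbb C}$ with $c_1(\mathcal L) = \mathrm{PD}[c]$. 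For generic perturbations compatible with the perturbed $\widetilde{\CS}$ used to build $F_r I_d(Y)$ and $F_r I_d(Y')$, the moduli space $\mathcal M(W, E; \alpha, \alpha')$ of perturbed ASD connections asymptotic to irreducible critical points $\alpha, \alpha'$ is a smooth manifold of expected dimension $i(\alpha) - i(\alpha') + D$ with $D = -2c^2$, by the Atiyah--Patodi--Singer index formula for cobordisms with $b_1 = b^+ = 0$. The signed count of zero-dimensional components gives the chain map, and standard gluing and compactness arguments provide commutation with the Floer differentials and chain-homotopy invariance under changes of auxiliary data.

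The filtered enhancement follows from the Chern--Weil energy identity: for any ASD connection $A \in \mathcal M(W, E; \alpha, \alpha')$,
$$ \mathcal E(A) \;=\; \widetilde{\CS}(\alpha) - \widetilde{\CS}(\alpha') - \tfrac{c^2}{4}. $$
Since $\mathcal E(A) \ge 0$, this yields the filtration inequality $\widetilde{\CS}(\alpha') \le \widetilde{\CS}(\alpha) - c^2/4$, and hence the chain map restricts to homomorphisms $F_r I_d(Y) \to F_{r - c^2/4} I_{d + D}(Y')$, producing an IP-morphism of degree $-2c^2$ and level $-c^2/4$. Compatibility with the inclusions $i_r^{r'}$ is automatic, while compatibility with the periodicity isomorphisms $\varphi_{r,d}$ reflects that $\varphi$ implements the deck transformation on $\widetilde{\mathcal B}^*$ shifting $\widetilde{\CS}$ by $1$ and $i(\alpha)$ by $8$; naturality of the cobordism map under this shift gives the required intertwining.

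The main obstacle is sharpening the level to $-c^2/4 - \eta(W, c)$ with $\eta(W, c) > 0$ whenever $\pi_1(W) = 0$. Set
$$ \eta(W, c) \;:=\; \inf\bigl\{\mathcal E(A) : A \in \mathcal M(W, E; \alpha, \alpha') \text{ is rigid, for some irreducible } \alpha, \alpha'\bigr\} \;\ge\; 0. $$
Strict positivity is forced by an Uhlenbeck compactness argument: if a sequence $A_n$ had $\mathcal E(A_n) \to 0$, then after passing to a subsequence and accounting for bubble formation at finitely many points, the $A_n$ would converge up to gauge to a finite-energy flat $SO(3)$-connection on $W$. Under $\pi_1(W) = 0$ the only such connection is the trivial one, which in particular requires $w_2(E) = 0$ and has asymptotic limits at $Y$ and $Y'$ equal to the trivial connection $\theta$, contradicting the irreducibility of $\alpha$ and $\alpha'$. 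Combined with the observation that the possible values of $\mathcal E(A)$ form a set with no finite accumulation point (since the critical values of the perturbed $\widetilde{\CS}$ on $Y$ and $Y'$ do), this rules out $\eta(W, c) = 0$.
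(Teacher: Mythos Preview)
Your proof is correct and follows essentially the same approach as the paper: both count index-zero instantons on the $U(2)$-bundle $E_c$ determined by $c$, read off the degree from the index formula and the level from the energy identity $\mathcal E(A) = -c^2/4 + \widetilde{\CS}(\alpha) - \widetilde{\CS}(\alpha')$, and argue that $\eta(W,c) > 0$ when $\pi_1(W) = 0$ because a zero-energy instanton would be projectively flat with irreducible limits, yielding a nontrivial representation $\pi_1(W) \to SO(3)$. The only differences are cosmetic: the paper defines $\eta(W,c)$ in the unperturbed setting (as the least energy of any possibly broken ASD connection with irreducible flat limits) and explicitly defers the passage to perturbed moduli to a limiting argument in \cite{Daemi-Gamma}, while your Uhlenbeck-compactness step is redundant given your own observation that the energy values form a discrete set (so the infimum is achieved, and the minimizer is flat with irreducible limits, giving the contradiction directly).
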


\begin{remark}
The data of $(W,c)_*: I_\bullet(Y) \to I_\bullet(Y')$ consists of induced maps $(W,c)_*: F_r I_d(Y) \to F_{r+L} I_{d+D}(Y')$ for all $r \in \Bbb R \cup \{\infty\}$ and $d \in \Bbb Z$ compatible with the connecting homomorphisms $i$ and periodicity maps $\varphi$. In particular, we use the same notation $(W,c)_*$ for the induced map $I_d(Y) \to I_{d+D}(Y')$ on Floer's instanton homology.
\end{remark}

\begin{proof}
When $c = \varnothing$, the final claim is established for a more complicated chain complex in \cite[Proposition 2.15, Lemma 2.35]{Daemi-Gamma} and similarly \cite[Lemma 2.10-2.11]{NST}. These maps arise by counting irreducible connections of index zero on the trivial $SU(2)$-bundle over $W$ satisfying an appropriately perturbed ASD equation. While the latter reference only defines these maps for $r$ outside the critical set of the two Chern--Simons functions, again we may extend them to all $r$ in a right-continuous fashion.

The cobordism map for nonempty $c$ is given by the same construction, counting connections on the $U(2)$-bundle $E_c$ over $W$ with $c_1(E_c)$ Poincar\'e dual to $c$, for which the traceless part of the curvature $F_0(A)$ satisfies a perturbed version of the ASD equation $F_0(A)^+ = 0$. Here we count connections whose induced connection on the determinant bundle is fixed, modulo determinant-$1$ gauge transformations. The proof that this map is well-defined goes through with no change, but we will discuss the computation of its degree and level.

Given critical values $\alpha, \alpha'$ of the perturbed Chern--Simons functional of $Y$, $Y'$, we can form the moduli space of connections on $E_c$ satisfying the perturbed ASD equation and asymptotic to $\alpha$ and $\alpha'$ on the ends. This moduli space has connected components with different expected dimensions. Furthermore, the expected dimension of the connected component containing a connection $A$ is uniquely determined by the \emph{topological energy} of $A$, defined as 
\begin{equation}\label{eqn:energy}
\mathcal E(A) = \frac{1}{8\pi^2} \int \text{tr}(F_0(A)^2).
\end{equation}
We use the following convention to fix a subspace $M(W, c; \alpha, \alpha')$  of the moduli space of instantons on $E_c$ with a fixed expected dimension.

The bundle $E_c$ has a splitting as $L_c\oplus \underline {\mathbb C}$ where $L_c$ is a line bundle whose $c_1$ is Poincar\'e dual to $c$. If $A_c$ is a connection on $L_c$ which is asymptotic to the trivial connection on the ends, then it induces a reducible connection on $E_c$ compatible with the splitting $L_c\oplus \underline {\mathbb C}$, which we still denote by $A_c$. The quantity $\mathcal E(A_c)$ is equal to $-c^2/4$ by Chern--Weil theory. Then the moduli space $M(W, c; \alpha, \alpha')$ consists of gauge equivalence classes of instantons $A$ on $E_c$ that are asymptotic to $\alpha, \alpha'$ on the ends and whose topological energy satisfies 
\begin{align}\label{eqn:energy-rel}
\mathcal E(A) &= \mathcal E(A_c)+ \widetilde{\CS}(\alpha) - \widetilde{\CS}(\alpha')\nonumber\\&= -c^2/4+\widetilde{\CS}(\alpha) - \widetilde{\CS}(\alpha').
\end{align}

Now we turn to the definition of $(W, c)_*$. The coefficient of $\alpha'$ in $(W, c)_* (\alpha)$ is given by counting the number of points in $M(W, c; \alpha, \alpha')$ when this space has expected dimension zero. Observe that by additivity of the ASD index \cite[Equation (3.2), Proposition 3.10]{DonBook}, we have 
\begin{align}\label{index-form}i(W,c; \alpha, \alpha') 
&= i(\alpha) + 3 + i(W, c; \theta, \theta') - i(\alpha').\end{align} 
Here $i(W,c; \alpha, \alpha')$ denotes the index of the ASD operator associated to connections in $M(W, c; \alpha, \alpha')$, and $i(W, c; \theta, \theta')$ denotes the index of $A_c$.  
Using the index formula for the ASD operators, we have $i(W, c; \theta, \theta') = -2c^2 - 3.$ Using this and \eqref{index-form}, $i(W,c; \alpha, \alpha') = 0$ if and only if $i(\alpha') = i(\alpha) - 2c^2$, giving the degree.

To determine the level, we argue similarly. To simplify the discussion, we assume that no perturbation is needed to define the cobordism map $(W,c)_*$, and we refer the reader to \cite[Lemma 3.48]{Daemi-Gamma} for the limiting argument used to address the more general case; see also the proof of \cite[Theorem 3.7]{NST} for a similar limiting argument.  Following \cite[Definition 3.46]{Daemi-Gamma}, let $\eta(W,c)$ denote the least topological energy of any (possibly broken) ASD connection on $(W,E_c)$ with irreducible flat limits. Since the topological energy of any instanton is non-negative, the quantity $\eta(W,c)$ is also non-negative. Furthermore, if $\eta(W,c)$ is zero, then $E_c$ admits some projectively flat $U(2)$-connection with irreducible limits, and therefore its adjoint bundle admits a flat connection with irreducible limits. It follows that there is a homomorphism $\pi_1(W) \to SO(3)$ which restricts to a non-trivial homomorphism on both ends. It follows that if $\pi_1(W) = 0$, then $\eta(W, c) > 0$. 
In the definition of $\langle (W,c)_*\alpha, \alpha'\rangle$, we count instantons with irreducible flat limits $\alpha$ and $\alpha'$. Therefore, \eqref{eqn:energy-rel} implies that 
 $$\widetilde{\CS}(\alpha') = \widetilde{\CS}(\alpha) -c^2/4 - \mathcal E(A) \le \widetilde{\CS}(\alpha) - c^2/4 - \eta(W,c).$$
This gives the claim about the level of $(W,c)_*$. 
\end{proof}

Proposition \ref{prop:IP-mod} allows us to define numerical invariants of integer homology spheres, while Proposition \ref{prop:IP-morphism} --- together with Lemma \ref{lemma:kappa-ineq} --- allows us to analyze their behavior under cobordisms which induce injections on Floer's instanton homology. We will now focus on $R = \Bbb F_2$.

\begin{definition}
Suppose $Y$ is an integer homology sphere. We define $\kappa_Y: \Bbb Z \to \Bbb R \cup \{\infty\}$ and $\ell(Y) \in \Bbb R \cup \{\infty\}$ as $$\kappa_Y(d) = \kappa_{I_\bullet(Y; \Bbb F_2)}(d), \quad \ell(Y) = \ell(I_\bullet(Y; \Bbb F_2)),$$ the numerical invariants of the $IP$-module associated with $Y$. 
\end{definition}

These quantities are invariant under orientation-preserving diffeomorphisms of integer homology spheres. Notice that Lemma \ref{lemma:kappa-basic}(c) immediately implies that $\ell(Y) < \infty$ if and only if $I_d(Y; \Bbb F_2) \ne 0$ for some $d \in \Bbb Z/8$.

\begin{example}\label{ex:ell}
We will compute $\ell$ for the Poincar\'e homology sphere $\Sigma(2,3,5)$ oriented as the boundary of the negative definite $E8$ plumbing, or equivalently, as $-1$-surgery on the left-handed trefoil. As discussed in \cite[Lemma 2.9(i)]{DLME}, there are two flat, irreducible $SU(2)$-connections modulo gauge, denoted $[\alpha]$ and $[\beta]$, with $\CS(\alpha) = 1/120$ and $\CS(\beta) = 49/120$, with $\alpha$ in grading $1$. The computations there extend to show that $\beta$ has grading $5$. It follows that 
\[\kappa_{\Sigma(2,3,5)}(d) = 
\begin{cases} 
1/120 + (d-1)/8 & d \equiv 1 \mod 8 \\
49/120 + (d-5)/8 & d \equiv 5 \mod 8 \\ 
\infty & d \not\equiv 1 \mod 4,
\end{cases}\]
and thus that

\[\ell(\Sigma(2,3,5)) = \text{min}(1/120 - 1/8, 49/120 - 5/8) = \text{min}(-7/60, -13/60) = -13/60.\]
\end{example}

\subsection{Exact triangles}
We state the two exact triangles relevant to this paper. The first is now classical, and due to Floer. The second originates, in the context of admissible bundles, in \cite{CDX}. In this section, $Y$ is an integer homology sphere, $K \subset Y$ is a knot, and $Y_r(K)$ is the manifold obtained by $r$-surgery on $K$.

The following is \cite[Theorem 2.4]{Floer-knot}. Our index convention compares to Floer's as $i(\alpha) = -3 - i_{\text{Floer}}(\alpha)$. Here, we collapse the $\Bbb Z/8$-grading on $I_*(Y)$ to a $\Bbb Z/4$-grading, so for instance the degree $1$ mod $4$ part of $I_*(Y)$ is $I_1(Y) \oplus I_5(Y)$. 

\begin{proposition}\label{prop:Floer-triangle}
There is an exact triangle of $\Bbb Z/4$-graded modules \[\begin{tikzcd}
	{I_{*}(Y)} && {I_*(Y_{-1}(K))} \\
	\\
	& {I_*^w(Y_0(K))}
	\arrow["{[-3]}", from=3-2, to=1-1]
	\arrow["{W_*}", from=1-1, to=1-3]
	\arrow[from=1-3, to=3-2]
\end{tikzcd}\]
The horizontal homomorphism is the cobordism map associated to the $2$-handle cobordism, and the leftmost homomorphism has degree $-3$.
\end{proposition}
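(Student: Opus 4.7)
The overall plan is to construct three cobordism-induced chain maps arranged in a cyclic triangle, establish nullhomotopies of the three consecutive pairwise compositions, and then verify the hypotheses of the standard exact triangle detection lemma from gauge theory (originating with Floer and formalized by Braam--Donaldson and Ozsv\'ath--Szab\'o). The three manifolds $Y$, $Y_{-1}(K)$, and $Y_0(K)$ form a \emph{surgery triad}: successive pairs are related by 2-handle cobordisms obtained by attaching a handle along $K$ with framing $-1$, then along the meridian (dual knot) with framing $0$, and finally along the next dual knot, completing the cycle.

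The first step is to define the three chain maps via Proposition \ref{prop:IP-morphism}. The cobordisms entering or leaving $Y_0(K)$ must be equipped with an embedded surface $c$ whose boundary represents the non-trivial $w$-data defining $I_*^w(Y_0(K))$; the cobordism $Y_0(K) \to Y$ naturally supports such a surface, and the index computation from the proof of Proposition \ref{prop:IP-morphism} (the identification of the degree shift as $D = -2c^2$ together with the additive constant $-3$ from the index formula $i(W,c;\theta,\theta')=-2c^2-3$) yields the stated degree shift $-3 \pmod 4$.

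The second step is to show that each consecutive composition $f_{i+1}\circ f_i$ is chain nullhomotopic. Each stacked cobordism $W_{i+1}\cup W_i$ contains an essential embedded 2-sphere of self-intersection $-1$, assembled from the cocore of the first handle together with the core of the second. Stretching the metric along a long neck in a 1-parameter family and analyzing the ends of the resulting moduli space of ASD connections produces a chain homotopy whose coboundary recovers $f_{i+1}\circ f_i$; the remaining boundary terms arise from bubbling at the $-1$-sphere and vanish after the corresponding blow-down, which identifies the composite cobordism with a simpler one on which the induced map is visibly zero.

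The final step invokes the algebraic triangle-detection lemma: given three chain complexes with maps $f_i$ forming a cycle, pairwise nullhomotopies $H_i$ of consecutive compositions, and a triple nullhomotopy defined by counting ASD connections over a 2-parameter family of metrics on the triple composite cobordism, an exact triangle results provided the triple nullhomotopy is a quasi-isomorphism. The main obstacle lies in this last verification: one must carefully identify the limiting behavior of the 2-parameter moduli space on the triple composite (a 4-manifold containing three $-1$-spheres), extract the contribution of the reducible stratum in its compactification, and check that after three blow-downs this contribution localizes to the identity up to sign. Transversality and orientation conventions for these multi-parameter families are the technical heart of the argument; once the identity contribution is isolated, exactness of the stated triangle follows immediately from the algebraic criterion.
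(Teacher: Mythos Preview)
The paper does not prove this proposition at all: it is stated as Floer's theorem and simply cited to \cite[Theorem 2.4]{Floer-knot}, with only a remark on how the index conventions compare. So there is no ``paper's own proof'' to match; your sketch is an attempt to reconstruct the classical argument rather than to reproduce anything in this paper.

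Your outline is broadly the standard one (cobordism maps, one-parameter nullhomotopies, two-parameter family plus the triangle detection lemma), but there are a couple of genuine inaccuracies. First, you cannot invoke Proposition~\ref{prop:IP-morphism} to define all three chain maps: that proposition is stated only for cobordisms between \emph{integer homology spheres} with $b_1(W)=b^+(W)=0$, whereas two of the three cobordisms in the triangle have $Y_0(K)$ (a homology $S^2\times S^1$) as an end and carry the non-trivial admissible bundle determined by $w$. Those maps require the more general construction for admissible bundles, which the paper does not develop. Second, your degree computation is muddled: the formula $i(W,c;\theta,\theta')=-2c^2-3$ from the proof of Proposition~\ref{prop:IP-morphism} yields a degree shift of $-2c^2$ (the $-3$ is already absorbed into the grading convention $i(\alpha)$), so the $-3$ shift on the arrow $I_*^w(Y_0(K))\to I_*(Y)$ does not come from that formula directly---it reflects the contribution of the non-trivial homology of $Y_0(K)$ to the index, which is absent from the integer-homology-sphere case treated in Proposition~\ref{prop:IP-morphism}.

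Finally, your description of the nullhomotopy is a bit off: in the standard argument (e.g.\ Braam--Donaldson or \cite{Scaduto-KH}), the composite cobordism is not analyzed by ``blowing down a $(-1)$-sphere'' per se, but by cutting along a different separating $3$-manifold (typically an $S^3$) and observing that one side supports no irreducible flat connections; the one-parameter family of metrics interpolates between the two decompositions. The spirit is right, but the mechanism you describe is not quite the one that makes the proof work.
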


In particular, by Proposition \ref{prop:IP-morphism}, the homomorphism $W_*: I_*(Y) \to I_*(Y_{-1})$ naturally extends to an $IP$-morphism $I_\bullet(Y) \to I_\bullet(Y_{-1})$ of degree and level zero.

We will establish the following distance-two surgery exact triangle only over $R = \Bbb F_2$, to avoid checking tedious details with signs. The result is expected to hold with coefficients in any commutative ring.

\begin{proposition}\label{prop:CDX-triangle}
There is an exact triangle of $\Bbb Z/8$-graded $\Bbb F_2$-vector spaces
\[\begin{tikzcd}
	{I_{*+2}(Y_{-1}(K); \Bbb F_2)} && {I_*(Y_1(K); \Bbb F_2)} \\
	\\
	& {I_*(Y; \Bbb F_2) \oplus I_{*+2}(Y; \Bbb F_2)}
	\arrow["{(W',c')_* \oplus W'_*}", from=3-2, to=1-1]
	\arrow["{[-1]}", from=1-1, to=1-3]
	\arrow["{W_* \oplus (W, c)_*}", from=1-3, to=3-2]
\end{tikzcd}\]
The diagonal homomorphisms are induced by direct sums of cobordism maps, where $W$ and $W'$ are the 2-handle cobordisms with $c, c'$ the cocore and core respectively, capped off with a Seifert surface in $Y$ to give a closed surface.
\end{proposition}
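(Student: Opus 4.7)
The plan is to adapt the strategy of \cite{CDX}, who established the analogous exact triangle for admissible bundles, to Floer's irreducible instanton Floer homology of integer homology spheres. The three chain maps are produced by counting ASD instantons on specific cobordisms (with appropriate surface decorations), and exactness is established via a parameterized moduli space argument on auxiliary $4$-manifolds.

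I would begin by verifying, via Proposition \ref{prop:IP-morphism}, that each of the four cobordism maps $W_*$, $(W,c)_*$, $(W',c')_*$, and $W'_*$ is a well-defined $IP$-morphism of the asserted degree and level. A direct calculation with the framings shows $c^2 = c'^2 = -1$ after capping the cocore/core with a Seifert surface for $K$ in $Y$, giving the required degree shift of $2$ on the second summand $I_{*+2}(Y)$ and on $I_{*+2}(Y_{-1}(K))$. The two diagonal maps $f = W_* \oplus (W,c)_*$ and $g = (W',c')_* \oplus W'_*$ are assembled from these. The connecting (horizontal) map $h$ is not itself a cobordism map; it will be produced from the chain-level null-homotopy constructed below.

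The technical heart of the proof is the construction of a chain null-homotopy of the composition
\[
g \circ f \;=\; (W',c')_*\, W_* \;+\; W'_*\, (W,c)_*,
\]
which I view as two cobordism-map contributions on the composed 2-handle cobordism $X = W' \circ W : Y_1(K) \to Y_{-1}(K)$ decorated with the closed surfaces $c \subset X$ (supported in $W$) and $c' \subset X$ (supported in $W'$). I would produce the null-homotopy by counting instantons on $X$ over a one-parameter family of metrics (for instance, stretching the neck at $Y$) together with an appropriate interpolation between the surface decorations $c$ and $c'$; the boundary of the resulting one-dimensional moduli space produces precisely the two contributions above, yielding the sought homotopy relation. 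The connecting map $h : I_{*+2}(Y_{-1}(K)) \to I_*(Y_1(K))$ is read off from this construction, and the full exactness of the triangle follows by a mapping cone argument. An alternative derivation proceeds by iteratively applying Floer's exact triangle (Proposition \ref{prop:Floer-triangle}) to $K \subset Y$ and to the dual knot $K^* \subset Y_{\pm 1}(K)$, mirroring the derivation of the analogous triangle for $\widehat{HF}$ in Heegaard Floer theory \cite[Theorem 3.1]{OS-surgery}.

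The main obstacle is controlling the ends of these parameterized moduli spaces in the non-admissible setting. Beyond the usual concerns of broken trajectories and bubbling, one must carefully treat boundary strata coming from the reducible (trivial) connection $\theta$ on $Y$; these strata are absent in the admissible-bundle version of \cite{CDX} and require organizing contributions by topological energy using the Chern--Simons filtration itself. Working over $\Bbb F_2$ removes the need to track signs, which would otherwise be intricate due to the multiple auxiliary moduli spaces involved.
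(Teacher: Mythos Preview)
Your proposal has a genuine structural gap. Constructing a single null-homotopy of $g \circ f$ is not enough to produce an exact triangle: a null-homotopy of one composite only gives a chain map into $\mathrm{Cone}(g)$, and you must still show this is a quasi-isomorphism. The paper accomplishes this via the triangle detection lemma, which requires null-homotopies $g_i$ of \emph{all three} composites $f_{i-1} f_i$ together with second-order homotopies $h_i$ (counting instantons over a pentagon of metrics on the triple composite cobordisms $W^i_{i-3}$) satisfying
\[
d_{i-3} h_i + f_{i-2} g_i + g_{i-1} f_i + h_i d_i = q_i
\]
with $q_i$ chain homotopic to the identity. Establishing this last identity is the heart of the argument and involves analyzing the face of the pentagon where the metric breaks along an embedded $S^2 \times S^1$; your sketch does not touch this.

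There are also two more local errors. First, the horizontal map $I_{*+2}(Y_{-1}(K)) \to I_*(Y_1(K))$ \emph{is} a cobordism map: it is $f_2 = (W^2_1, \hat c^2_1)_*$ on a cobordism $X: Z_{-1} \to Z_1$ with an $\mathbb{RP}^3$ middle end, not something extracted from the null-homotopy. Second, your description of the null-homotopy of $g \circ f$ is not the mechanism that works. Stretching the neck at $Y$ recovers the composite itself; the actual family $G^1_{-1}$ interpolates between breaking at $Y$ and breaking along an embedded $\mathbb{RP}^3 \subset W^1_{-1}$, and the $\mathbb{RP}^3$-broken end contributes a nonzero map rather than zero. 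The paper handles this by running the construction for \emph{two} bundle decorations $\hat c^1_{-1}$ and $\check c^1_{-1}$ which agree on the piece away from $N$, so that the $\mathbb{RP}^3$ contributions coincide and cancel upon taking the difference; this is why $g_1$ is a difference of two family cobordism maps. The reducible analysis you flag is indeed the main technical point, but the relevant reducibles are not $\theta$ on $Y$: they are the abelian instanton on the piece $N$ with $\mathbb{RP}^3$ boundary (Lemma~\ref{lemma:N-reducibles}) and the one-parameter family of abelian flat connections on $S^2 \times S^1$. Finally, the alternative via iterating Floer's triangle cannot directly give the stated result, since Floer's triangle is only $\Bbb Z/4$-graded and its third vertex is $I_*^w(Y_0(K))$ rather than $I_*(Y)^{\oplus 2}$.
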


Here, because of the degree shift in the domain, the statement that the top homomorphism $I_{*+2}(Y_{-1}) \to I_*(Y_1)$ has degree $-1$ means that $I_d(Y_{-1})$ maps to $I_{d-3}(Y_1)$. 

Now Proposition \ref{prop:IP-morphism} states that the rightmost and leftmost homomorphisms have natural extensions to a \emph{direct sum} of $IP$-morphisms. The morphisms $W_*$ and $W'_*$ have degree zero and nonpositive level, while $(W,c)_*$ and $(W',c')_*$ have degree $2$ and level at most $1/4$. It will be relevant later that $L - D/8 \le 0$ in both cases, with strict inequality if $\pi_1(Y \setminus K)$ is normally generated by the meridian $\mu_K$. The proof of Proposition \ref{prop:CDX-triangle} will be given in Section \ref{sec:proof-36}.\\

When $Y = S^3$, the top map is an isomorphism because $I_*(S^3)$ is trivial. The proof of the exact triangle in this particular case gives an explicit description of its inverse, a map $g_1: I_*(Y_1(K); \Bbb F_2) \to I_{*-1}(Y_{-1}(K); \Bbb F_2)$. This map is obtained by counting instantons on the composite cobordism with respect to a certain \emph{one-parameter family} of metrics on the composite. As a result, it gives rise to an $IP$-morphism. We record its properties in the following proposition, whose proof is given in Section \ref{sec:proof-37}.

\begin{proposition}\label{prop:pm-one-map}
Let $K$ be a knot in $S^3$. Then there is an $IP$-morphism $g_1: I_\bullet(S^3_1(K);\Bbb F_2) \to I_\bullet(S^3_{-1}(K); \Bbb F_2)$ of degree $3$ and level $1/4 - \eta(K)$, where $\eta(K) \ge 0$ is strictly positive when $K$ is not the unknot. The induced map on Floer's instanton homology $I_d(S^3_1(K); \Bbb F_2) \to I_{d-1}(S^3_{-1}(K); \Bbb F_2)$ is an isomorphism.
\end{proposition}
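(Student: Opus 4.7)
The plan is to construct $g_1$ as the null-homotopy that arises in the derivation of Proposition \ref{prop:CDX-triangle}. Consider the composite cobordism $W'' = W \cup_{S^3} W'$ from $S^3_1(K)$ to $S^3_{-1}(K)$ equipped with a surface class $c$ of self-intersection $-1$ assembled from the constituent surfaces, together with a one-parameter family of Riemannian metrics $\{g_T\}_{T \in [0,\infty]}$ in which $T$ parameterizes the length of the neck along the intermediate copy of $S^3$. Counting index-$(-1)$ ASD connections on $(W'', E_c)$ over this family yields a chain-level map $H$ that satisfies the null-homotopy relation $dH + Hd = f_2 \circ f_1$, where $f_1$ and $f_2$ are the chain maps underlying the two diagonal arrows of the distance-two triangle. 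This construction is a standard feature of the exact triangle proof, which will be carried out in Section \ref{sec:proof-36}.

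For the particular case $Y = S^3$, one can arrange perturbations so that the intermediate complex computing $I_\ast(S^3) \oplus I_{\ast+2}(S^3)$ vanishes identically (since $S^3$ admits no non-trivial irreducible $SU(2)$ flat connections). Consequently $f_2 \circ f_1 = 0$ on the nose, and the null-homotopy relation degenerates to $dH + Hd = 0$; that is, $H = g_1$ is a chain map. Standard functoriality of the count then upgrades $g_1$ to an $IP$-morphism on the filtered theory. To see $g_1$ induces an isomorphism on Floer homology, note that by construction its class is a homotopy inverse to the connecting homomorphism $\partial$ of the exact triangle; when the middle term vanishes, $\partial$ is an isomorphism, and hence so is $g_1$.

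The degree and level computations parallel those in the proof of Proposition \ref{prop:IP-morphism}. Each component of $f_2 \circ f_1$ contributes absolute degree shift $-2c^2 = 2$, and the one-parameter family of metrics adds an additional $+1$ to the degree of the null-homotopy, giving $g_1$ total degree $3$. For the level, an ASD connection $A$ on $(W'', E_c)$ with asymptotic limits $\alpha$ on $S^3_1(K)$ and $\alpha'$ on $S^3_{-1}(K)$ satisfies
\[
\widetilde{\CS}(\alpha') - \widetilde{\CS}(\alpha) \;=\; -c^2/4 - \mathcal E(A) \;\le\; 1/4 - \eta(K),
\]
where $\eta(K)$ is defined as the infimum of the topological energy $\mathcal E(A)$ over all non-trivial (possibly broken) ASD connections on $(W'', E_c)$ with irreducible flat limits at both ends. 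Positivity of $\eta(K)$ for non-trivial $K$ is the analogue of the argument in Proposition \ref{prop:IP-morphism}: if $\eta(K) = 0$, there would be a projectively flat $U(2)$-connection on $(W'', E_c)$ with irreducible limits, which provides an $SO(3)$ representation of $\pi_1(W'')$ restricting to non-trivial representations on both boundary components, and a van Kampen analysis of $\pi_1(W'')$ in terms of $\pi_1(S^3 \setminus K)$ rules this out when $K$ is knotted.

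The principal obstacle will be the rigorous analysis of the parameterized moduli space of ASD connections on $(W'', E_c)$. One must identify the $T \to \infty$ boundary contribution with $f_2 \circ f_1$, verify that the $T \to 0$ end and the bubble strata contribute no additional boundary, and choose perturbations so that the intermediate complex associated to the middle $S^3$ slice vanishes on the nose rather than merely up to homotopy. This analysis mirrors that of the standard surgery triangle proofs going back to Floer and Braam--Donaldson, but it must be supplemented throughout by careful bookkeeping of the Chern--Simons filtration to extract the stated level bound.
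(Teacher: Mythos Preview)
Your strategy has the right shape, but the family of metrics you describe leads to a genuine obstacle that the paper handles differently. With your family $\{g_T\}_{T\in[0,\infty]}$, the endpoint $T=0$ is an ordinary unbroken metric on $W''$. The boundary of the one-dimensional parametrized moduli space therefore contributes, at $T=0$, the usual cobordism map $(W'',c,g_0)_*$. This map is null-homotopic (it is chain-homotopic to a composite factoring through $CI_*(S^3)=0$), but there is no reason for it to vanish on the chain level; your relation is thus $dH+Hd=(W'',c,g_0)_*$ rather than $dH+Hd=0$, and $H$ fails to be a chain map. The assertion that the $T\to 0$ end ``contributes no additional boundary'' is precisely the point that cannot be verified.

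The paper's family $G^1_{-1}$ instead has \emph{both} endpoints broken: one along $Z_0=S^3$, the other along an embedded $\mathbb{RP}^3=M^1_{-1}$ interior to the composite cobordism. The $\mathbb{RP}^3$-broken end does contribute a nonzero map; to cancel it, the paper defines $g_1$ as a sum of \emph{two} family-cobordism maps, one for each of the bundle data $\hat c^1_{-1}$ and $\check c^1_{-1}$. These two bundles restrict identically to the piece $-W^{-1}_{-2}$ on the far side of the $\mathbb{RP}^3$, and the $\mathbb{RP}^3$-broken contribution is determined entirely by that restriction, so the two contributions agree and cancel over $\Bbb F_2$. Your single-bundle setup has no analogous mechanism. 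The remaining parts of your sketch---the degree and level computations, the homotopy-inverse argument via the triangle-detection lemma, and the positivity of $\eta(K)$ through simple-connectivity of the composite cobordism---match the paper's treatment.
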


\section{Proof of the main theorems}\label{sec:cosmetic-proof}

Here we use the results stated in the previous section to prove our main results.

\subsection{Proof of Theorem \ref{thm:S2timesS1}}
Theorem \ref{thm:S2timesS1} follows from the following more precise claim. To set notation, let $L$ be a knot in $S^2 \times S^1$ which generates the ihomology group  $H_1(S^2 \times S^1;\Z)$ and fix a choice of framing curve $\lambda$. Let $S_n(L)$ be the three-manifold obtained by surgery on $L$ with framing $n\mu + \lambda$.  Note that $S_n(L)$ is a homology sphere for every $n$.  We write $K_n \subset S_n(L)$ for the dual knot.

\begin{theorem}\label{ineq-S^1xS^2}
In the situation above, if $L$ is not isotopic to $\{\ast\} \times S^1$, then we have $$-\infty < \ldots < \ell(S_{n+1}(L)) < \ell(S_n(L)) < \ell(S_{n-1}(L)) < \ldots < \infty.$$
\end{theorem}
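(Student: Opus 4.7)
The plan is to apply Floer's surgery exact triangle (Proposition \ref{prop:Floer-triangle}) to the dual knot $K_n \subset S_n(L)$. With $Y = S_n(L)$ and $K = K_n$ we have $Y_0(K) = S^2 \times S^1$ and $Y_{-1}(K) = S_{n+1}(L)$, and Proposition \ref{prop:Floer-def}(c) gives $I_*^w(S^2 \times S^1) = 0$. The triangle therefore collapses to say that the cobordism map $(W_n)_* : I_*(S_n(L)) \to I_*(S_{n+1}(L))$ is an isomorphism, where $W_n : S_n(L) \to S_{n+1}(L)$ is the 2-handle cobordism obtained by $(-1)$-framed attachment along $K_n$.

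I would then verify that $W_n$ is both negative-definite and simply-connected. Negative-definiteness is immediate from the long exact sequence of the pair $(W_n, S_n(L))$: $H_2(W_n; \Bbb Z) \cong \Bbb Z$ with self-intersection $-1$ coming from the framing. For simple connectedness, $\pi_1(W_n) = \pi_1(S_n(L))/\langle K_n\rangle$; writing $S_n(L) = ((S^2 \times S^1) \setminus N(L)) \cup N(K_n)$, the core $K_n$ is freely homotopic to the curve $\mu_L \subset \partial N(L)$, which is the longitude of $N(K_n)$ dual to the surgery slope $\mu_{K_n} = n\mu_L + \lambda_L$. Thus killing $K_n$ together with $\mu_{K_n}$ in $\pi_1((S^2 \times S^1) \setminus N(L))$ kills both $\mu_L$ and $\lambda_L$, which refills $N(L)$ and leaves $\pi_1(S^2 \times S^1)/\langle [L]\rangle = 0$, since $[L]$ generates $H_1(S^2 \times S^1)$.

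With these properties, Proposition \ref{prop:IP-morphism} (with empty surface $c = \varnothing$) upgrades $(W_n)_*$ to an $IP$-morphism $I_\bullet(S_n(L); \Bbb F_2) \to I_\bullet(S_{n+1}(L); \Bbb F_2)$ of degree $0$ and level $-\eta(W_n) < 0$. Since it is an isomorphism on $F_\infty$, hence injective, Lemma \ref{lemma:kappa-ineq}(b) yields
\[ \ell(S_{n+1}(L)) \;\le\; \ell(S_n(L)) - \eta(W_n) \;<\; \ell(S_n(L)). \]
Iterating over all $n$ produces the descending chain in the statement. For the finiteness bounds, $\ell(S_n(L)) > -\infty$ is automatic from Lemma \ref{lemma:kappa-basic}(c), while $\ell(S_n(L)) < \infty$ reduces (via the isomorphisms of the $I_*$ groups) to the non-vanishing $I_*(S_n(L); \Bbb F_2) \ne 0$ for a single value of $n$; this is precisely \cite[Theorem 1.3]{LPCZ} under the hypothesis that $L$ is not isotopic to $\{\pt\} \times S^1$. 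The main subtlety I anticipate is the $\pi_1$ computation for $W_n$, which requires care with the meridian/longitude conventions of $K_n$ against those of $L$; once that is pinned down, the theorem drops out mechanically from the $IP$-module framework of Sections \ref{sec:kappa} and \ref{sec:instantons}.
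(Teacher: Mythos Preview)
Your proposal is correct and follows essentially the same route as the paper: Floer's triangle collapses because $I_*^w(S^2\times S^1)=0$, the simply-connected negative-definite trace cobordism $W_n$ gives an $IP$-morphism of degree $0$ and strictly negative level via Proposition~\ref{prop:IP-morphism}, and Lemma~\ref{lemma:kappa-ineq}(b) together with nonvanishing from \cite[Theorem 1.3]{LPCZ} yields the strict chain of inequalities. The only cosmetic differences are that the paper establishes finiteness of $\ell(S_n(L))$ \emph{before} invoking the strict inequality (which is logically cleaner, since $\ell(S_n(L))-\eta(W_n)<\ell(S_n(L))$ requires $\ell(S_n(L))<\infty$), and it notes explicitly that the knot exterior is irreducible with incompressible boundary as the hypothesis needed to apply \cite{LPCZ}.
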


\begin{proof}
Our assumption on $L$ implies that $S_n(L) \setminus N(K_n) \cong S^2 \times S^1 \setminus N(L)$ is an irreducible 3-manifold whose boundary is incompressible. By \cite[Theorem 1.3]{LPCZ}, each $S_n(L)$ has non-trivial instanton homology, hence $\ell(S_n(L)) < \infty$ for all $n$.

The 2-handle cobordism $W(n): S_n(L) \to S_{n+1}(L)$ has $b^+= 0$ and trivial $\pi_1$.  To see the latter claim, note that the Seifert--Van Kampen theorem implies that $\pi_1(W(n))$ is isomorphic to the quotient of $\pi_1(S^2 \times S^1 \setminus N(L))$ by the normalizer of $\pi_1(\partial N(L))$. This is equivalent to the quotient of $\pi_1(S^2 \times S^1)$ by the normal subgroup generated by the class of $L$, and hence it is trivial. Proposition \ref{prop:IP-morphism} gives an $IP$-morphism $W(n)_*: I_\bullet(S^3_n(L)) \to I_\bullet(S^3_{n+1}(L))$ with degree $0$ and level $-\eta\big(W(n)\big) < 0$. Applying Proposition \ref{prop:Floer-triangle} to the pair $(S_n(L), K_n)$ gives an exact triangle 
\[\begin{tikzcd}
	{I_{*}(S_n(L))} && {I_*(S_{n+1}(L))} \\
	\\
	& {I_*^w(S^2 \times S^1)}
	\arrow["{[-3]}", from=3-2, to=1-1]
	\arrow["{W(n)_*}", from=1-1, to=1-3]
	\arrow[from=1-3, to=3-2]
\end{tikzcd}\]

By Proposition \ref{prop:Floer-def}(c), $I^w_*(S^2 \times S^1) = 0$, so the induced map $W(n)_*: I_*(S_n(L)) \to I_*(S_{n+1}(L))$ is an isomorphism. By Lemma \ref{lemma:kappa-ineq}(b), we see that $$\ell(S_{n+1}(L)) \le \ell(S_n(L)) - \eta(W(n)).$$ Because $\ell(S_n(L))$ is finite, we in fact have a strict inequality $\ell(S_{n+1}(L)) < \ell(S_n(L))$ for all $n$.
\end{proof}

\begin{proof}[Proof of Theorem \ref{thm:S2timesS1}]
	Theorem \ref{ineq-S^1xS^2} implies that for any pair of distinct integers $n$ and $m$, the 3-manifolds $S^3_{n}(L)$ and 
	$S^3_{m}(L)$ are not orientation-preserving diffeomorphic. Next, let $X:S^3_{n}(L) \to S^3_{m}(L)$ be a ribbon homology cobordism, and 
	let $\overline X:S^3_{m}(L) \to S^3_{n}(L)$ be the cobordism obtained by flipping around the orientation-reversal of $X$.  Proposition \ref{prop:IP-morphism} implies that
	\[
	  X_*: I_\bullet(S^3_{n}(L)) \to I_\bullet(S^3_{m}(L)),\hspace{1cm} \overline X_*: I_\bullet(S^3_{m}(L)) \to I_\bullet(S^3_{n}(L))
	\]
	are $IP$-morphisms of degree $0$ and level $0$. Furthermore, \cite[Theorem 4.1]{ribbon-hom} asserts that 
	$X_*:I_*(S_n(L)) \to I_*(S_{m}(L))$ and $\overline X_*:I_*(S_{m}(L)) \to I_*(S_{n}(L))$ are respectively 
	injective and surjective homomorphisms of vector spaces in any degree. Since it is shown in the proof of Theorem \ref{ineq-S^1xS^2} that
	$I_*(S_n(L))$ and $I_*(S_{m}(L))$ are isomorphic, the maps $X_*$ and $\overline X_*$ are also isomorphisms on Floer homology, which extend to IP-morphisms of degree $0$ and level $0$. 
	By another application of Lemma \ref{lemma:kappa-ineq}(b), we conclude that $ \ell(S_n(L)) = \ell(S_{m}(L))$, which is a contradiction.
\end{proof}

\subsection{Proof of Theorem \ref{thm:1/n}}
The proof of Theorem \ref{thm:1/n} is similar, but more intricate, and requires some more initial input. Henceforth, $K$ denotes a knot in $S^3$. We begin with a computation of instanton Floer groups for surgeries on a knot. 

The following statement is restricted to the case $R = \Bbb F_2$ because Proposition \ref{prop:CDX-triangle} is. An interested reader willing to lift the proof of Proposition \ref{prop:CDX-triangle} to the integers would be able to generalize the following statement to allow coefficients in an arbitrary commutative ring.

\begin{lemma}\label{lemma:n-surgery-calc}
For any integer $n > 0$ we have an isomorphism of $\Bbb Z/8$-graded $\Bbb F_2$-vector spaces $$I_*\big(S^3_{-1/n}(K); \Bbb F_2\big) \cong \bigoplus_{i=0}^{n-1} I_{*-2i}\big(S^3_{-1}(K); \Bbb F_2\big), \quad \quad I_*\big(S^3_{1/n}(K);\Bbb F_2\big) \cong \bigoplus_{i=0}^{n-1} I_{*+2i}\big(S^3_{1}(K); \Bbb F_2\big).$$ Furthermore, we have isomorphisms of $\Bbb Z/4$-graded $\Bbb F_2$-vector spaces $$I_*(S^3_{-1}(K); \Bbb F_2) \cong I_*^w(S^3_0(K); \Bbb F_2) \cong I_{*-3}(S^3_1(K); \Bbb F_2).$$
\end{lemma}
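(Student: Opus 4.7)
The plan is to handle the second family of isomorphisms first and then bootstrap the first family by induction. For the second family, apply Floer's surgery triangle (Proposition~\ref{prop:Floer-triangle}) to the pair $(S^3,K)$: since $I_*(S^3;\Bbb F_2)=0$ by Proposition~\ref{prop:Floer-def}(c), the two outer vertices of the triangle vanish, forcing the cobordism map $I_*(S^3_{-1}(K);\Bbb F_2) \to I^w_*(S^3_0(K);\Bbb F_2)$ to be an isomorphism of $\Bbb Z/4$-graded vector spaces. Applying the distance-two triangle (Proposition~\ref{prop:CDX-triangle}) to the same pair kills the middle vertex $I_*(S^3;\Bbb F_2) \oplus I_{*+2}(S^3;\Bbb F_2)$, so the top map $I_{*+2}(S^3_{-1}(K);\Bbb F_2) \to I_*(S^3_1(K);\Bbb F_2)$ is forced to be an isomorphism; as explained immediately after Proposition~\ref{prop:CDX-triangle}, this translates into the isomorphism $I_d(S^3_{-1}(K);\Bbb F_2) \cong I_{d-3}(S^3_1(K);\Bbb F_2)$ of $\Bbb Z/8$-graded, hence $\Bbb Z/4$-graded, vector spaces.

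For the first family, I would proceed by induction on $n \ge 1$, with $n = 1$ tautological. For the inductive step, set $Y = S^3_{-1/(n-1)}(K)$, which is an integer homology sphere, and let $J \subset Y$ denote the dual knot to $K$ equipped with a framing chosen so that $Y_{-1}(J) = S^3_{-1/n}(K)$ and $Y_0(J) = S^3_0(K)$. The existence of such a framing is the statement that the slopes $-1/n$, $-1/(n-1)$, and $0$ form a triangle in the Farey graph on $\partial N(K)$, verified by a direct change-of-basis computation in the dual coordinates $(\mu^*,\lambda^*) = (-\mu + (n-1)\lambda,\,\lambda)$ on $\partial N(K)$. Plugging this triple into Proposition~\ref{prop:Floer-triangle} yields the exact triangle
\[
I_*(Y;\Bbb F_2) \xrightarrow{W_*} I_*(S^3_{-1/n}(K);\Bbb F_2) \to I^w_*(S^3_0(K);\Bbb F_2) \to I_{*-3}(Y;\Bbb F_2),
\]
where $W: Y \to S^3_{-1/n}(K)$ is the $2$-handle cobordism attached along $J$; this cobordism is simply connected and negative definite since $J$ is null-homologous in the homology sphere $Y$.

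I expect the main obstacle to be showing that this triangle collapses to a short exact sequence that splits over $\Bbb F_2$, since once that is established, combining the inductive hypothesis $I_*(Y;\Bbb F_2) \cong \bigoplus_{i=0}^{n-2} I_{*-2i}(S^3_{-1}(K);\Bbb F_2)$ with the first-step identification $I^w_*(S^3_0(K);\Bbb F_2) \cong I_*(S^3_{-1}(K);\Bbb F_2)$ produces the desired decomposition of $I_*(S^3_{-1/n}(K);\Bbb F_2)$. My plan for the splitting is to prove injectivity of $W_*$, exploiting the strict decrease of the Chern--Simons filtration under the simply-connected negative definite cobordism (Proposition~\ref{prop:IP-morphism}) together with the boundedness and periodicity built into the $IP$-module structure; alternatively, one can sandwich the dimension count between the long exact sequence upper bound and a lower bound coming from an independent surgery formula. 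The $\Bbb Z/8$-grading shift $-2(n-1)$ of the new summand arises from tracking the $\Bbb Z/8$-degree contribution $-2c^2$ of a capped-off cocore in the iterated $2$-handle attachment, which accumulates by $-2$ at each inductive step. The analogous statement for $S^3_{1/n}(K)$ then follows either by replaying the induction with the mirror knot $\overline{K}$ (using $S^3_{1/n}(K) \cong -S^3_{-1/n}(\overline{K})$ together with the known grading behavior of instanton Floer homology under orientation reversal) or by a directly parallel argument with the opposite framing conventions.
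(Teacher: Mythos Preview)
Your treatment of the second claim is correct (and slightly different from the paper's, which applies Floer's triangle twice---to $(S^3,K)$ and to $(S^3_1(K),\tilde K)$---rather than invoking the distance-two triangle for the second isomorphism; both routes work).

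The inductive argument for the first claim has a genuine gap at exactly the point you flag: establishing injectivity of $W_*: I_*(S^3_{-1/(n-1)}(K)) \to I_*(S^3_{-1/n}(K))$. Neither of your proposed routes succeeds. The Chern--Simons filtration only says that $W_*$ strictly lowers filtration level; it places no constraint whatsoever on $\ker W_*$, so a nonzero homology class can perfectly well map to zero. The dimension-count alternative is circular: the long exact sequence gives the upper bound $\dim I_*(S^3_{-1/n}) \le \dim I_*(S^3_{-1/(n-1)}) + \dim I_*^w(S^3_0)$, but to force equality you would need an independent lower bound on $\dim I_*(S^3_{-1/n})$, and that is precisely what the induction is supposed to produce.

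The paper's fix is to run the induction through the distance-two triangle of Proposition~\ref{prop:CDX-triangle} rather than Floer's, and to \emph{strengthen the inductive hypothesis}: one proves simultaneously the direct-sum decomposition and that the $2$-handle cobordism map is, under the chosen isomorphisms, a coordinate projection (for $1/n$) or coordinate inclusion (for $-1/n$). The strengthened hypothesis at step $n$ makes one of the maps in the distance-two triangle surjective (resp.\ injective), collapsing it to a short exact sequence whose explicit description then yields both the decomposition and the strengthened hypothesis at step $n{+}1$. This also explains why the paper obtains a $\Bbb Z/8$-graded statement: Floer's triangle in Proposition~\ref{prop:Floer-triangle} is only $\Bbb Z/4$-graded (the vertex $I^w_*(S^3_0(K))$ carries only a $\Bbb Z/4$-grading), so even a successful version of your argument would give only a $\Bbb Z/4$-graded splitting, and your proposed $-2c^2$ grading bookkeeping does not apply since the map $W_*$ in Floer's triangle uses the trivial bundle.
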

\begin{proof}
The coefficient ring $R = \Bbb F_2$ will be suppressed from notation for the rest of the argument. We will prove the first statement for $I_*(S^3_{1/n}(K))$ for $n > 0$; the argument for $I_*(S^3_{-1/n}(K))$ can be proved similarly. To simplify notation, for this proof we write $S_{1/n} = S^3_{1/n}(K)$. 

We will prove the following stronger claim by induction on $n \ge 1$: \begin{itemize}
\item For each $n$ there is an isomorphism $$\varphi_n: I_*(S_{1/n}) \to \bigoplus_{i=0}^{n-1} I_{*+2i}(S_1).$$
\item These isomorphisms can be chosen so that for all $n \ge 2$, the 2-handle cobordism map $I_*(S_{1/n}) \to I_*(S_{1/(n-1)})$ is identified with the projection to the first $n-1$ coordinates (and in particular is surjective).\footnote{The corresponding statement for $-1/n$ is that the 2-handle cobordism map $W_*: I_*(S_{-1/(n-1)}) \to I_*(S_{-1/n})$ is identified with the inclusion of the first $n-1$ coordinates, and in particular is injective.}
\end{itemize}

The base case $n = 1$ is tautological; one may take $\varphi_1$ to be the identity.

Suppose the claim is proved for $n \ge 1$. Apply Proposition \ref{prop:CDX-triangle} to the pair $(Y, K) = (S_{1/n}, \tilde K)$ with $\tilde K$ being the dual knot. By inductive hypothesis the 2-handle cobordism map $W'_*: I_*(S_{1/n}) \to I_*(S_{1/(n-1)})$ is known to be surjective, so our exact triangle is in fact a short exact sequence $$0 \to I_*(S_{1/(n+1)}) \xrightarrow{W_* \oplus (W,c)_*} I_*(S_{1/n}) \oplus I_{*+2}(S_{1/n}) \xrightarrow{(W', c')_* \oplus W'_*} I_{*+2}(S_{1/(n-1)}) \to 0.$$ 

Applying the inductive hypothesis, this sequence is isomorphic to the short exact sequence $$0 \to  I_*(S_{1/(n+1)}) \xrightarrow{\varphi_n W_* \oplus \varphi_n (W,c)_*} \left(\bigoplus_{i=0}^{n-1} I_{*+2i}(S_1)\right) \oplus \left(\bigoplus_{j=1}^n I_{*+2j}(S_1)\right) \xrightarrow{f \oplus \pi} \bigoplus_{j=1}^{n-1} I_{*+2j}(S_1) \to 0,$$ where $\pi$ is projection onto the first $n-1$ coordinates. 

Thus by exactness the map $\varphi_n W_* \oplus \varphi_n (W,c)_*$ induces an isomorphism \begin{align*}I_*(S_{1/(n+1)}) &\cong \{(x,y,z) \in \left(\bigoplus_{i=0}^{n-1} I_{*+2i}(S_1)\right) \oplus \left(\bigoplus_{j=1}^{n-1} I_{*+2j}(S_1)\right) \oplus I_{*+2n}(S_1)  \mid f(x) + y=0\} \\
&\cong \left(\bigoplus_{i=0}^{n-1} I_{*+2i}(S_1)\right) \oplus I_{*+2n}(S_1),
\end{align*} where the final map sends $(x,y,z)$ to $(x,z)$. The isomorphism $\varphi_{n+1}$ is the composite of these two identifications. Finally, with respect to these isomorphisms the map $I_*(S_{1/(n+1)}) \to I_*(S_{1/n})$ is given by sending $(x,z)$ to $x$, completing the induction.\\

The second claim is Proposition \ref{prop:Floer-triangle} applied to the pairs $(Y,K) = (S^3, K)$ and $(S^3_1(K), \tilde K)$.
\end{proof}

\begin{corollary}\label{cor:nonvanishing}
If $K$ is a non-trivial knot in $S^3$, the vector space $I_*(S_{1/n}(K); \Bbb F_2)$ is non-trivial for all integers $n \ne 0$.
\end{corollary}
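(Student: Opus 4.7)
The plan is to reduce the non-vanishing statement for all $n \ne 0$ to the case $n = \pm 1$, and then to invoke the same non-vanishing input already used in the proof of Theorem \ref{ineq-S^1xS^2}. The key observation is that by Lemma \ref{lemma:n-surgery-calc}, for each integer $n \ge 1$ we have $\Bbb Z/8$-graded isomorphisms
\[
I_*\big(S^3_{1/n}(K);\Bbb F_2\big) \cong \bigoplus_{i=0}^{n-1} I_{*+2i}\big(S^3_1(K);\Bbb F_2\big), \qquad I_*\big(S^3_{-1/n}(K);\Bbb F_2\big) \cong \bigoplus_{i=0}^{n-1} I_{*-2i}\big(S^3_{-1}(K);\Bbb F_2\big).
\]
Taking the $i=0$ summand in either decomposition shows that $I_*(S^3_1(K);\Bbb F_2)$ and $I_*(S^3_{-1}(K);\Bbb F_2)$ are direct summands of $I_*(S^3_{1/n}(K);\Bbb F_2)$ and $I_*(S^3_{-1/n}(K);\Bbb F_2)$ respectively. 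Consequently the corollary reduces to proving that $I_*(S^3_{\pm 1}(K);\Bbb F_2) \ne 0$ for every non-trivial knot $K$.

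For this reduced case, I would argue exactly as in the proof of Theorem \ref{ineq-S^1xS^2}. Let $K' \subset S^3_{\pm 1}(K)$ denote the dual knot; its exterior is homeomorphic to the exterior $S^3 \setminus \nu(K)$. Since $K$ is non-trivial, this exterior is irreducible and has incompressible torus boundary by classical $3$-manifold topology. The hypotheses of \cite[Theorem 1.3]{LPCZ} are therefore satisfied for the integer homology sphere $S^3_{\pm 1}(K)$ together with the knot $K'$, and that theorem immediately yields $I_*(S^3_{\pm 1}(K);\Bbb F_2) \ne 0$, completing the proof.

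The argument is essentially mechanical given the algebraic machinery already assembled: Lemma \ref{lemma:n-surgery-calc} exports all the work to $\pm 1$-surgery, and the cited non-vanishing theorem handles that base case. The only point requiring verification is the irreducibility and incompressibility of the dual knot's exterior, which is classical. I do not anticipate any substantive obstacle beyond stringing these two inputs together.
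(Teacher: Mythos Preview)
Your reduction to $n = \pm 1$ via the first part of Lemma~\ref{lemma:n-surgery-calc} is correct and parallels the paper's own reduction. The issue is your invocation of \cite[Theorem~1.3]{LPCZ} for the base case $I_*(S^3_{\pm 1}(K);\Bbb F_2) \ne 0$.

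Note first that ``irreducible exterior with incompressible torus boundary'' cannot be the only hypothesis of that theorem: any non-trivial knot in $S^3$ has such an exterior, yet $I_*(S^3) = 0$. In the paper's application (the proof of Theorem~\ref{ineq-S^1xS^2}) the relevant exterior is explicitly identified as a knot complement in $S^2 \times S^1$; that is the setting to which \cite[Theorem~1.3]{LPCZ} applies. In your situation the exterior of the dual knot $K' \subset S^3_{\pm 1}(K)$ is a knot complement in $S^3$, and since $K$ is non-trivial no Dehn filling of it yields $S^2 \times S^1$ (Gabai). So you have not verified the hypotheses actually used in Theorem~\ref{ineq-S^1xS^2}, and this step is a genuine gap.

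The paper fills the base case differently. It uses the \emph{second} part of Lemma~\ref{lemma:n-surgery-calc} --- the $\Bbb Z/4$-graded isomorphisms $I_*(S^3_{\pm 1}(K);\Bbb F_2) \cong I^w_*(S^3_0(K);\Bbb F_2)$ --- to reduce instead to showing $I^w_*(S^3_0(K);\Bbb F_2) \ne 0$. A universal coefficient argument then reduces this to non-vanishing over $\Bbb C$, which is precisely \cite[Proposition~7.22]{KM-sutures}. That is the correct replacement for your second step.
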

\begin{proof} 
By the preceding lemma, it is equivalent to show $I^w_*(S_0(K); \Bbb F_2) \ne 0$. By the universal coefficient theorem, it suffices to show $I^w_*(S_0(K); \Bbb C) \ne 0$. Finally, because $K$ is non-trivial, \cite[Proposition 7.22]{KM-sutures} asserts that $I^w_*(S_0(K); \Bbb C)$ is non-trivial.
\end{proof}

Finally, Theorem \ref{thm:1/n} follows immediately from the following more precise claim.

\begin{theorem}
If $K$ is a non-trivial knot in $S^3$, we have $$-\infty < \cdots < \ell\big(S^3_{-1/2}(K)\big) < \ell\big(S^3_{-1}(K)\big) < \ell\big(S^3_{1}(K)\big) < \ell\big(S^3_{1/2}(K)\big) < \cdots < \infty$$

In fact, we have $\ell\big(S^3_{-1/n}(K)\big) < \ell\big(S^3_{1/n}(K)\big) - 1/8$ for all $n > 0$.
\end{theorem}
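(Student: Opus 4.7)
The plan is to decompose the chain of inequalities into three families of strict bounds and then splice them together. Concretely, I will prove: (a) $\ell(S^3_{-1/(n+1)}(K)) < \ell(S^3_{-1/n}(K))$ for each $n\ge 1$; (b) $\ell(S^3_{1/n}(K)) < \ell(S^3_{1/(n+1)}(K))$ for each $n\ge 1$; and (c) $\ell(S^3_{-1}(K)) < \ell(S^3_1(K)) - 1/8$. Two-sided finiteness $-\infty<\ell(\cdot)<\infty$ on each surgery follows from Lemma~\ref{lemma:kappa-basic} together with Corollary~\ref{cor:nonvanishing}. Chaining (a)--(c) then yields, for each $n\ge 1$,
\[\ell(S^3_{-1/n}(K)) \le \ell(S^3_{-1}(K)) < \ell(S^3_1(K)) - 1/8 \le \ell(S^3_{1/n}(K)) - 1/8,\]
which is the distance-$1/8$ assertion of the theorem.

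The argument for (a) closely mirrors Theorem~\ref{ineq-S^1xS^2}. I would apply Proposition~\ref{prop:Floer-triangle} to the pair $(S^3_{-1/n}(K),\widetilde K)$, with $\widetilde K$ the dual knot, producing the $2$-handle cobordism $W\colon S^3_{-1/n}(K)\to S^3_{-1/(n+1)}(K)$. The footnote in Lemma~\ref{lemma:n-surgery-calc} provides the injectivity of $W_*$ on Floer homology, and a short Seifert--Van~Kampen computation (killing the attaching class $\lambda$ together with the filling curve $-\mu+n\lambda$ collapses $\pi_1(S^3\setminus K)$ to the trivial group, since $\mu$ normally generates) yields $\pi_1(W)=0$. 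Proposition~\ref{prop:IP-morphism} then promotes $W_*$ to an $IP$-morphism of degree $0$ and level $-\eta(W)<0$, and Lemma~\ref{lemma:kappa-ineq}(b) gives (a). For (c), Proposition~\ref{prop:pm-one-map} and Lemma~\ref{lemma:kappa-ineq}(b) combine in a single step: $g_1$ has degree $3$, level $1/4-\eta(K)$, and is an isomorphism on Floer homology, so
\[\ell(S^3_{-1}(K))\le \ell(S^3_1(K))+\bigl(1/4-\eta(K)-3/8\bigr)=\ell(S^3_1(K))-1/8-\eta(K),\]
which is strictly less than $\ell(S^3_1(K))-1/8$ because $\eta(K)>0$ for non-trivial $K$.

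The hard part is (b), because the natural $2$-handle cobordism $W\colon S^3_{1/(n+1)}(K)\to S^3_{1/n}(K)$ induces a Floer-homology map that is \emph{surjective} rather than injective by Lemma~\ref{lemma:n-surgery-calc}, so Lemma~\ref{lemma:kappa-ineq}(b) does not apply directly. My strategy is to replace Floer's triangle with the distance-two exact triangle of Proposition~\ref{prop:CDX-triangle} applied to $(Y,K)=(S^3_{1/n}(K),\widetilde K)$. The same short-exact-sequence argument used in the inductive step of Lemma~\ref{lemma:n-surgery-calc}---namely that surjectivity of the subsequent map $W'_*$ forces the prior connecting map in the triangle to vanish---shows that the combined CDX map
\[W_*\oplus (W,c)_*\colon I_*(S^3_{1/(n+1)}(K))\to I_*(S^3_{1/n}(K))\oplus I_{*+2}(S^3_{1/n}(K))\]
is injective. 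Both summands extend to $IP$-morphisms by Proposition~\ref{prop:IP-morphism}: for $W$ one has $(D,L)=(0,-\eta(W))$, while for $(W,c)$ one has $(D,L)=(2,1/4-\eta(W,c))$, so in each case $L-D/8=-\eta<0$ once one verifies $\pi_1(W)=0$ (again by Seifert--Van~Kampen, as in (a)). Lemma~\ref{lemma:inhomog-ineq} then delivers the strict inequality in (b), completing the proof.
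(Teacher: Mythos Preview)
Your proof is correct and follows essentially the same route as the paper: finiteness via Lemma~\ref{lemma:kappa-basic}(c) and Corollary~\ref{cor:nonvanishing}, the negative-surgery inequalities via Floer's triangle and Lemma~\ref{lemma:kappa-ineq}(b), the central $-1/8$ gap via Proposition~\ref{prop:pm-one-map}, and the positive-surgery inequalities via the injectivity of $W_*\oplus(W,c)_*$ in the distance-two triangle together with Lemma~\ref{lemma:inhomog-ineq}. Your explicit chaining to obtain the $1/8$ statement for all $n$ and your Seifert--Van~Kampen verification of $\pi_1(W)=0$ are a bit more detailed than the paper's presentation, but the substance is identical.
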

\begin{proof}
That these $\ell$-invariants are all finite follows from Corollary \ref{cor:nonvanishing} and Lemma \ref{lemma:kappa-basic}(c).

It was established in the proof of Lemma \ref{lemma:n-surgery-calc} that for $n > 0$, the cobordism $W_n: S^3_{-1/n}(K)  \to S^3_{-1/(n+1)}(K)$ induces an injection on Floer homology. Thus, Floer's exact triangle collapses to a short exact sequence $$0 \to I_*(S^3_{-1/n}(K)) \to I_*(S^3_{-1/(n+1)}(K)) \to I_*^w(S^3_0(K)) \to 0,$$ where the first map $(W_n)_*$ is the cobordism map induced by the simply-connected negative-definite cobordism $W_n: S^3_{-1/n}(K) \to S^3_{-1/(n+1)}(K)$ given by attaching a $(-1)$-framed handle along $\tilde K$. Because $(W_n)_*$ is injective, and by Proposition \ref{prop:IP-morphism} extends to an $IP$-morphism $$I_\bullet\big(S^3_{-1/n}(K)\big) \to I_\bullet\big(S^3_{-1/(n+1)}(K)\big)$$ of degree $0$ and level $-\eta(W_n) < 0$, it follows from Lemma \ref{lemma:kappa-ineq}(b) that $\ell(S^3_{-1/(n+1)}(K)) < \ell(S^3_{-1/n}(K))$ for all $n > 0$. 

Next, Proposition \ref{prop:pm-one-map} and Lemma \ref{lemma:kappa-ineq}(b) immediately combine to give 
$$\ell\big(S^3_{-1}(K)\big) \le \ell\big(S^3_{1}(K)\big) + \big(1/4 - \eta(K) - 3/8\big) = \ell\big(S^3_{1}(K)\big) - 1/8 - \eta(K) < \ell\big(S^3_{1}(K)\big) - 1/8.$$
Finally, we use that the triangle of Proposition \ref{prop:CDX-triangle} also collapses into a short exact sequence $$0 \to I_*(S^3_{1/(n+1)}(K)) \to I_*(S^3_{1/n}(K)) \oplus I_{*+2}(S^3_{1/n}(K)) \to I_{*+2}(S^3_{1/(n-1)}(K)) \to 0.$$ In particular, the first map is injective. This map is the direct sum $W_* \oplus (W,c)_*$ of cobordism maps, where $W$ is simply-connected and negative-definite. Proposition \ref{prop:IP-morphism} implies that this enriches to a direct sum of morphisms of $IP$-modules, the first of which has $L_1 - D_1/8 = -\eta(W) < 0$, the latter of which has $L_2 - D_2/8 = -\eta(W,c) < 0$. In particular, the larger of the two is still negative. Lemma \ref{lemma:inhomog-ineq} immediately gives $$\ell\big(S^3_{1/n}(K)\big) < \ell\big(S^3_{1/(n+1)}(K)\big),$$ completing the proof.
\end{proof}

\begin{proof}[Proof of Theorem \ref{thm:1/n}]
By Lemma \ref{lemma:n-surgery-calc}, we see that $I_*(S^3_{1/n}(K); \Bbb F_2)$ and $I_*(S^3_{-1/n}(K); \Bbb F_2)$ have the same rank. As in the proof of Theorem \ref{thm:S2timesS1}, we see that if $X: S^3_{1/n}(K) \to S^3_{-1/n}(K)$ is a ribbon homology cobordism and $\overline X$ is obtained by flipping its orientation-reversal, then the induced maps $X_*, \overline X_*$ are both isomorphisms. From this it follows that $\ell(S^3_{1/n}(K)) = \ell(S^3_{-1/n}(K))$, a contradiction.
\end{proof}

\section{The distance-two surgery triangle}\label{sec:tri-proof}

The goal of this section is to construct the exact triangle in Proposition \ref{prop:CDX-triangle}. The existence of such an exact triangle is proposed in \cite{CDX}, and our proof here follows closely the proof of \cite[Theorem 1.6]{CDX}, which concerns the analogue of Proposition \ref{prop:CDX-triangle} for instanton homology of admissible bundles with respect to the more general gauge group $SU(N)$. In particular, the proof of the $N=2$ case of \cite[Theorem 1.6]{CDX} provides the skeleton of the proof of Proposition \ref{prop:CDX-triangle}, except that we also need to analyze the reducible ASD connections over various cobordisms to guarantee that they do not cause any issue in the construction of the distance-two surgery triangle. Our proof of Proposition \ref{prop:CDX-triangle} is also formally similar to the proof of Floer's surgery exact triangle as given in \cite{Scaduto-KH}, though the argument is complicated by the presence of the more complicated `middle ends' discussed in Section \ref{sec:cobordism-defs}.

In the first subsection below, we review a standard homological algebra lemma about exact triangles. In the next subsection, we review the definition of various cobordisms with families of Riemannian metrics relevant in the proof of Proposition \ref{prop:CDX-triangle}. Then we study the reducible ASD connections with respect to these families of metrics. The proofs of Proposition \ref{prop:CDX-triangle} and Proposition \ref{prop:pm-one-map} are given in the final two subsections. 

To avoid dealing with the study of orientations of moduli spaces, we work with $\Bbb F_2$ coefficients in this section. In particular, all chain complexes in this section are defined over $\Bbb F_2$.

\subsection{Homological algebra of surgery exact triangles}\label{sec:triangle-detection}
As with many exact triangles in Floer theory, the proof of Proposition \ref{prop:CDX-triangle} is given by the \emph{triangle detection lemma} (see \cite[Lemma 3.7]{seidel} and \cite[Lemma 4.2]{os:branched}).
\begin{proposition}\label{T-detec}
	For each $i \in \Z$, let $(C_i,d_i)$ be a chain complex.
	Suppose that for all $i\in \Z$ we are given maps
	\[
	  f_i:C_i \to C_{i-1} \hspace{.5cm} g_i: C_i \to C_{i-2} \hspace{.5cm} h_i: C_i \to C_{i-3},
	\]
	which satisfy the following properties:
	\begin{eqnarray*}
		d_i^2&=& 0 \label{identity-0}\\
		d_{i-1}  f_i + f_i  d_i  &=& 0 \label{identity-1}\\
		d_{i-2} g_i + f_{i-1}  f_i + g_i  d_i &=& 0 \label{identity-2} \\
		d_{i-3} h_i + f_{i-2}  g_i + g_{i-1}  f_i + h_i  d_i &=& q_i \label{identity-3}
	 \end{eqnarray*}
	where $q_i:C_i \to C_{i-3}$ is homotopic to the identity.
	Then the map
	\[
	  C_{i+1} \xrightarrow{(f_i,g_i)} \textup{Cone}(f_{i-1}) \overset{\textup{def}}{=} \left(C_{i-1}\oplus C_{i-2}[1],\left[\begin{array}{cc} d_{i-1}&0\\ f_{i-1} & d_{i-2}\end{array}\right]\right)  \\
	\]
	is a chain homotopy equivalence. In particular, if $H_i$ denotes the homology of $(C_i,d_i)$, then we have the following exact triangle:
	\[
	\begin{tikzcd}
	H_{-1} && H_{1} \\
	\\
	&H_{0}
	\arrow["{(f_{0})_*}", from=3-2, to=1-1]
	\arrow["{(f_{-1})_*}", from=1-1, to=1-3]
	\arrow["{(f_1)_*}", from=1-3, to=3-2]
\end{tikzcd}
	\]
\end{proposition}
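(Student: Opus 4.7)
The plan is to establish that $\Phi := (f_i, g_i)$ is a chain homotopy equivalence in two stages: first verify $\Phi$ is a chain map using the identities $df + fd = 0$ and $dg + ff + gd = 0$, and then produce an explicit contracting homotopy for its mapping cone using the higher data $g, h$ together with a chosen chain homotopy witnessing $q_i \simeq \textup{id}$.

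The chain-map check is a direct computation: writing $\tilde d$ for the differential on $\textup{Cone}(f_{i-1})$, the equation $\tilde d \Phi = \Phi d$ decomposes into one component for each summand of $\textup{Cone}(f_{i-1})$, and these two components match exactly the second and third identities of the hypothesis.

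For the chain homotopy equivalence, I would form the mapping cone $M := \textup{Cone}(\Phi)$, whose underlying graded module is $C_{i+1}[1] \oplus C_{i-1} \oplus C_{i-2}[1]$ with a lower-triangular differential whose off-diagonal entries come from $f_i, f_{i-1}, g_i$. Because we work over the field $\Bbb F_2$, $\Phi$ is a chain homotopy equivalence iff $M$ is contractible. To produce an explicit contracting homotopy $H : M \to M$, I would use the implicit $\Bbb Z/3$-periodicity of the hypothesis --- the condition ``$q_i \simeq \textup{id}$'' is only well-posed because $C_{i-3}$ is tacitly identified with $C_i$ --- and fix a chain homotopy $s_j : C_j \to C_j$ satisfying $d_j s_j + s_j d_j = q_j + \textup{id}$. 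Then $H$ is assembled as a lower-triangular $3 \times 3$ matrix with diagonal entries $s$, sub-diagonal entries built from $g$ (composed with the cyclic identifications), and far-corner entry $h$. The fourth identity $dh + fg + gf + hd = q$ is precisely what is needed to guarantee that $d_M H + H d_M$ equals $\textup{id}_M$ on each diagonal block, while the second and third identities force the off-diagonal blocks to vanish.

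The main obstacle is the bookkeeping of this last step: each of the nine blocks of the matrix equation $d_M H + H d_M = \textup{id}_M$ must be matched, term by term, against the hypothesized identities evaluated at one of three consecutive indices $i-1, i, i+1$ which coincide only after periodic reidentification. Once $\Phi$ is known to be a chain homotopy equivalence, the stated exact triangle in homology follows formally from the tautological short exact sequence
$$0 \to C_{i-2}[1] \to \textup{Cone}(f_{i-1}) \to C_{i-1} \to 0,$$
whose connecting homomorphism is $(f_{i-1})_*$; transporting $H_*(\textup{Cone}(f_{i-1}))$ along the induced isomorphism $\Phi_*$ rewrites everything in terms of $H_{i+1}, H_{i-1}, H_{i-2}$ and yields the triangle as stated.
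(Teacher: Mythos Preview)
Your overall plan---show that $M = \textup{Cone}(\Phi)$ is contractible by writing down an explicit homotopy---is sound, but the homotopy you describe does not work. Because the differential $D$ on $M$ is strictly lower-triangular (its off-diagonal blocks are the various $f$'s and the $g$), a lower-triangular $H$ produces a lower-triangular $DH + HD$, and the $(1,1)$ block of the latter is simply
\[
d\,H_{11} + H_{11}\,d = d\,s_{i+1} + s_{i+1}\,d = q_{i+1} + \textup{id}.
\]
Nothing placed below the diagonal of $H$ can reach this block, so the fourth identity $dh + fg + gf + hd = q$ never enters the diagonal computation, contrary to what you claim; you would be left needing $q_{i+1} = 0$, which is not part of the hypothesis. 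The same problem recurs at the other two diagonal blocks.

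The remedy is that $H$ must carry nonzero entries \emph{above} the diagonal, built via the cyclic identification $C_j \cong C_{j-3}$. Concretely, put $s_j + h_j$ on the diagonal, the appropriate $g$'s on the super-diagonal, and an $f$ in the upper-right corner. A short computation then shows that $DH + HD$ is lower-triangular with identities on the diagonal---hence a chain automorphism of $M$---and this is enough to conclude that $M$ is contractible. The paper itself does not prove this proposition but cites the standard references; those typically bypass $\textup{Cone}(\Phi)$ altogether, instead writing down an explicit candidate inverse $\Psi: \textup{Cone}(f_{i-1}) \to C_{i+1}$ from the pair $(g,f)$ via periodicity and checking $\Psi\Phi \simeq \textup{id}$ and $\Phi\Psi \simeq \textup{id}$ directly from the $h$-identity at three consecutive indices.
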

\begin{remark}
The final hypothesis on $q_i$ is equivalent to the statement that $f_{i-2} g_i + g_{i-1} f_i$ is homotopic to the identity, so this condition could be stated without reference to $h_i$ or $q_i$. However, when we apply the triangle detection lemma, we will establish that this map is homotopic to the identity by defining a homotopy to a map which is then seen to be homotopic to the identity. We thus state the lemma in the form most closely resembling the way in which it is applied below. 
\end{remark}

To prove Proposition \ref{prop:CDX-triangle}, we apply this lemma to the case that 
\[
  C_{-1} := CI_*(Y_{-1}(K)),\hspace{1cm} C_{0}:=CI_*(Y) \oplus CI_{*-2}(Y),\hspace{1cm}C_{1}:=CI_*(Y_1(K)).
\]
More generally, $C_k$ is defined by requiring that $C_k=C_{k+3}$ for any $k$. The homomorphism $d_i$ in each case is given by the corresponding Floer differential and the maps $f_i$, $g_i$ and $h_i$ are given by cobordism maps in instanton Floer theory. We recall the definition of these cobordisms in the next subsection. 

\subsection{Cobordisms and families of metrics}\label{sec:cobordism-defs}

First we fix some notations for the discussion of the cobordisms involved in the proof of Proposition \ref{prop:CDX-triangle}. A cobordism from a 3-manifold $Z$ to another 3-manifold $Z'$ with a middle end $L$ is a 4-manifold $W$ with
\[
\partial W=-Z\sqcup Z'\sqcup L.
\] 
We write $W:Z \xrightarrow{L}Z'$ for any such cobordism, and we drop $L$ from the notation when the choice of the middle end is clear from the context. Given two such cobordisms $W_0:Z \xrightarrow{L}Z'$ and $W_1:Z' \xrightarrow{L'}Z''$, we may compose them to obtain $W_0\circ W_1:Z\xrightarrow{L\sqcup L'}Z''$. A metric with cylindrical ends on $W$ is a Riemannian metric on the non-compact 4-manifold given by gluing to $W$ the cylinders $(-\infty,0]\times Z$, $[0,\infty)\times Z'$ and $[0,\infty)\times L$ along the boundary components of $W$ such that the metric on the cylinders are the product metric. We usually assume that Riemannian metrics on the boundary components of $W$ are already fixed, and the product metrics on the cylindrical ends are modeled on these metrics.

In the following, fix a knot $K$ in an integer homology sphere $Y$, and let $E(K)$ denote the exterior of $K$. We assume that an identification of $\partial E(K)$ with $S^1\times S^1$ is fixed. For $-1\leq i\leq 1$, let $Z_i$ denote the result of $1/i$ surgery on the knot $K$ in $Y$, and extend the definition of $Z_i$ to any integer $i$ by requiring that $Z_{k+3}=Z_k$ for any $k$.

We first construct a cobordism $X: Z_{-1} \to Z_1$ with middle end $\mathbb{RP}^3$. A schematic diagram of this construction is presented as Figure \ref{fig1} below. Form a 4-manifold by gluing $[-3,-1]\times S^1\times D^2$ and $[1,3]\times S^1\times D^2$ to $[-3,3]\times E(K)$ respectively along $[-3,-1]\times \partial E(K)$ and $[1,3]\times \partial E(K)$ using the identifications ${\bf 1}_{[-3,-1]} \times \varphi_{-1}$ and ${\bf 1}_{[1,3]} \times \varphi_{1}$ where $\varphi_{\pm 1}:\partial (S^1\times D^2) \to \partial E(K)$  corresponds to $\pm 1$-surgery on $K$. The resulting $4$-manifold $X$ has three boundary components (after smoothing the corners) which may be identified with $-Z_{-1}$, $Z_1$ and $\mathbb{RP}^3$. This is a special case of the construction described in \cite[Section 3.1]{CDX}.

Using a similar construction, we construct cobordisms 
\[
  X':Z_1\xrightarrow{S^3} Z_0,\hspace{1cm} X'':Z_0\xrightarrow{S^3} Z_{-1}.
\]
Filling the $S^3$ boundary components of the latter two cobordisms with $4$-balls give rise to the standard 2-handle cobordisms. For any integer $i$, let $W^{i}_{i-1}:Z_i \xrightarrow{L_i} Z_{i-1}$ be given by either of $X$, $X'$ or $X''$ where $L_i$ is either $\mathbb{RP}^3$ or $S^3$. More generally, we define $W^{i}_{j}:Z_i \xrightarrow{L} Z_{j}$ for any $j\leq i$ as the iterated composite
\[
  W^{i}_{j}:= W^{j+1}_{j}\circ \cdots \circ W^{i}_{i-1}
\]
with $L=L_{i}\sqcup \dots \sqcup L_{j+1}$.

\begin{figure}
  \centering
\includegraphics[width=4.5in]{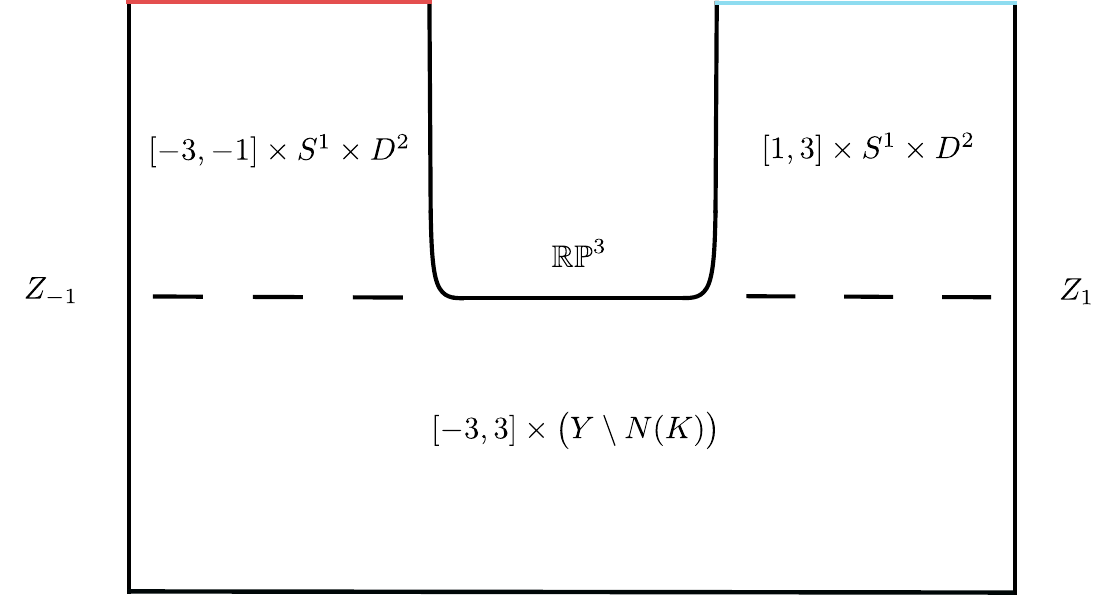}
\caption{A representation of the cobordism $X = W^{-1}_{-2}$. The three boundary components are labeled by their oriented diffeomorphism types. The embedded surface $\mathfrak c_- = [-3,-1] \times S^1 \times \{0\}$ is represented as the darker red curve, while $\mathfrak c_+$ is represented as the lighter blue curve. We visualize $c_+$ and $c_-$ by the same picture, with the understanding that the intersections with integer homology spheres are capped off by Seifert surfaces.}\label{fig1}
\end{figure} 

The cobordism $W^i_j$ admits a family of metrics $G^i_j$ parametrized by the {\it associahedron} of dimension $i-j-1$. We need these families of metrics with $i-j\leq 3$ to define the homomorphisms used in the proof of Proposition \ref{prop:CDX-triangle}. We only focus on recalling the definition of these families of metrics within this range. This family will be constructed by stretching the metric along certain embedded submanifolds of $W^i_j$, depicted in Figure \ref{fig2} below in the case $i - j = 3$; the case $i - j = 2$ can be visualized by looking only at $2/3$ of this picture.\\

First fix Riemannain metrics on the 3-manifolds $Z_{1}$, $Z_0$ and $Z_{-1}$. (This choice is already assumed in the definition of the complexes $C_i$ in the previous subsection.) Fix Riemannian metrics $G^i_{i-1}$ on the cobordisms $W^1_0, W^0_{-1}, W^{-1}_{-2}$ with cylindrical ends modeled on the chosen metrics on $Z_{1}$, $Z_0$ and $Z_{-1}$ and the round metrics on $S^3$ and $\mathbb{RP}^3$. We extend these choices to arbitrary $i$ by requiring that $G^{k+3}_{k+2} = G^k_{k-1}$ for any $k$.

The family of metrics $G^i_{i-2}$ on $W^{i}_{i-2}$ is parametrized by the interval $[-1,1]$. Each end of the interval corresponds to a Riemannian metric which is broken along a separating codimension-$1$ submanifold. For a detailed reference discussing such families including broken metrics, see \cite[Section 5]{KMOS}. Since $W^{i}_{i-2}$ is the composition of the cobordisms $W^{i}_{i-1}$ and $W^{i-1}_{i-2}$, the chosen metrics on these cobordisms determine a metric on $W^{i}_{i-2}$, which is broken along an embedded copy of $Z_{i-1}$. This metric is the element of the family of metrics $G^i_{i-2}$ associated to the endpoint $1$ of the interval $[-1,1]$. For $1/3\leq t< 1$, let $T=1/(1-t)$. Then the metric corresponding to $t$ in the family of metrics is given by removing the subspaces $[T,\infty)\times Z_{i-1}$ from $W^{i}_{i-1}$ and $(-\infty,-T]\times Z_{i-1}$ from $W^{i-1}_{i-2}$ and then gluing the boundary components $Z_{i-1}$ together.

The metric corresponding to the other endpoint is given by a broken metric fully stretched along an embedded copy of $S^3$ or $\mathbb{RP}^3$ in $W^{i}_{i-2}$. First we identify the copy of $\mathbb{RP}^3$ in the cobordism $W^{1}_{-1}=X'\circ X''$, which is depicted as the curve crossing $Y$ in Figure \ref{fig2}. 

The cobordism $W^1_{-1}$ is given by gluing 
\[
  [-5,-3]\times S^1\times D^2,\hspace{1cm}[-1,1]\times S^1\times D^2,\hspace{1cm}[3,5]\times S^1\times D^2
\]
to $[-5,5]\times E(K)$ respectively using the gluing maps ${\bf 1}_{[-5,-3]} \times \varphi_{1}$, ${\bf 1}_{[-1,1]} \times \varphi_{0}$ and ${\bf 1}_{[3,5]} \times \varphi_{-1}$. (Analogous to $\varphi_{\pm1}$, the map $\varphi_{0}$ is a diffeomorphism $\partial (S^1\times D^2) \to \partial E(K)$, which in this case corresponds to $\infty$-surgery.) Then the union of following subspaces of $W^{1}_{-1}$ 
\begin{align}
  \{-4\}\times S^1\times D^2\subset [-5,-3]\times S^1\times D^2,\hspace{1cm}&\hspace{1cm}\{4\}\times S^1\times D^2\subset [3,5]\times S^1\times D^2,\\
  \gamma \times S^1\times S^1 \subset &[-5,5]\times N(\partial E(K)) \label{3rd-part}
\end{align}
gives a submanifold $M^{1}_{-1}$ of $W^{1}_{-1}$ homeomorphic to $\mathbb{RP}^3$. Here $N(\partial E(K))$ is a regular neighborhood of the boundary $\partial E(K)$ in $E(K)$, and in particular, it can be identified with $(-1,0]\times S^1\times S^1$. This gives an identification of $[-5,5]\times N(\partial E(K))$ with $[-5,5]\times (-1,0]\times S^1\times S^1$. In \eqref{3rd-part}, $\gamma$ is a properly embedded path in $[-5,5]\times (-1,0]$ whose endpoints are $(-4,0)$ and $(4,0)$. The curve $\gamma$ is chosen so that the resulting submanifold lies in the interior of $W^1_{-1}$.\\

The submanifold $M^{1}_{-1}$ is separating and the two connected components of the complement of a tubular neighborhood of $M^{1}_{-1}$ can be described as follows. One of the components can be regarded as a cobordism from $Z_1$ to $Z_{-1}$ with the middle end $\mathbb{RP}^3$, and is given by flipping around the orientation-reversal of $W^{-1}_{-2}$. In Figure \ref{fig2}, this is the region cobounded by the curves labeled $Z_1, \mathbb{RP}^3$, and $Z_{-1}$. 

The other connected component is a 4-manifold $N$ with three boundary components, one of which is $\mathbb{RP}^3$ and the other two are $S^3$. In Figure \ref{fig2}, $N$ is diffeomorphic to either of the two regions cobounded by curves labeled $\mathbb{RP}^3, S^3$, and $S^3$. Let $D$ be a twice punctured 3-ball with three boundary components $S_{-1}$, $S_0$ and $S_{1}$ which are diffeomorphic to the 2-sphere. Then $N$ is diffeomorphic to the total space of the $S^1$-bundle over $D$ whose Euler class evaluates to $-1$ on the boundary components $S_{\pm 1}$ of $D$ and to $2$ on the boundary component $S_0$. In particular, the boundary component $\mathbb{RP}^3$ of $N$ corresponds to the circle bundle over $S_0$. 

The metric corresponding to $-1$ in the family of metrics $G^i_{i-2}$ is given by a broken metric on $W^{1}_{-1}$, which is broken along $M^{1}_{-1}$. To be more precise, we fix metrics with cylindrical ends on the complement of $M^{1}_{-1}$, which has two cylindrical ends corresponding to the round metrics on $\mathbb{RP}^3$, two cylindrical ends corresponding to the round metric on $S^3$ and one cylindrical end for each of $Z_{\pm 1}$. Similar to the previous case, we define our family of metrics for the interval by $(-1,-1/3]$ by removing half cylinders from the $\mathbb{RP}^3$ ends, and then gluing the two connected components along their $\mathbb{RP}^3$ boundaries. Finally we extend our family of metrics on $W^{1}_{-1}$ to $(-1/3,1/3)$ in an arbitrary way so that all metrics $g_t$ coincide on the four ends of $W^1_{-1}$ with the chosen cylindrical end metrics.\\

The definitions of the families of metrics on $W^2_0$ and $W^0_{-2}$ follow a similar scheme. In the same way as in the previous case, we can form separating submanifolds $M^2_0\subset W^2_0$ and $M^0_{-2}\subset  W^0_{-2}$, which are in this case diffeomorphic to $S^3$. Then the family of metrics $G^2_0$ (resp. $G^0_{-2}$) for $t=1$ is given by a broken metric that is fully stretched along $Z_1$ (resp. $Z_{-1}$), for $t=-1$ is given by a broken metric that is fully stretched along $M^2_0$ (resp. $M^0_{-2}$) and is extended by (non-broken) metrics for $t\in (-1,1)$. Finally we extend this construction to any $W^i_{i-2}$ by requiring that $G^{k+3}_{k+1}=G^{k}_{k-2}$ for any $k$. We also remark that the complement of $M^i_{i-2}$ in $W^i_{i-2}$ has a similar description as before; one of the components is still diffeomorphic to $N$ and the other component is given by flipping the orientation-reversal of $W^{i+1}_i$.\\

\begin{figure}\label{fig2}
  \centering
\includegraphics[width=6in]{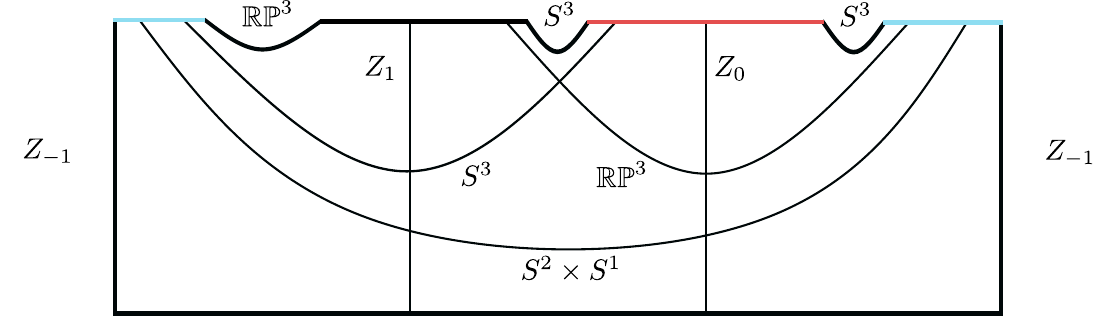}
\caption{A representation of the cobordism $W^2_{-1}$, with its five boundary components labeled. The more thin arcs represent the five submanifolds used in the construction of the family of metrics $G^2_{-1}$, and each arc is labeled by the diffeomorphism type of the corrresponding submanifold. The surface $\hat c^2_{-1}$ is represented as the lighter blue curve; the surface $\check c^2_{-1}$ is the union of the darker red and lighter blue curves. The middle end $\mathbb{RP}^3$ is drawn differently than the $S^3$ middle ends to make the asymmetry more apparent. The pictures for other $W^i_{i-3}$ are similar; for instance, the bundle data can on $W^i_{i-3}$ obtained by cyclically permuting the three depicted pieces.}\label{fig2}
\end{figure} 

Next, we turn into the description of the family of metrics $G^i_{i-3}$ on $W^i_{i-3}$ parametrized by a pentagon $P$, which is the 2-dimensional associahedron. First we consider the case of $G^2_{-1}$. We start by describing five codimension-$1$ submanifolds of the cobordism $W^2_{-1}=W^0_{-1} \circ W^1_0\circ W^2_1$. The definition of this cobordism as a composite implies that there are natural embeddings of $Z_0$ and $Z_1$ in $W^2_{-1}$. Since $W^2_{-1}=W^2_{0}\circ W^0_{-1}=W^2_{1}\circ W^1_{-1}$, there are also natural embeddings of $M^2_0$ and $M^1_{-1}$ into $W^2_{-1}$. To describe the fifth submanifold $M^{2}_{-1}$, note that $W^2_{-1}$ is given by gluing 
\[
  [-7,-5]\times S^1\times D^2,\hspace{1cm}[-3,-1]\times S^1\times D^2,\hspace{1cm}[1,3]\times S^1\times D^2,\hspace{1cm}[5,7]\times S^1\times D^2
\]
to $[-7,7]\times E(K)$ respectively using the gluing maps ${\bf 1}_{[-7,-5]} \times \varphi_{-1}$, ${\bf 1}_{[-3,-1]} \times \varphi_{1}$, ${\bf 1}_{[1,3]} \times \varphi_{0}$ and ${\bf 1}_{[5,7]} \times \varphi_{-1}$. Then $M^{2}_{-1}$ is a submanifold of $W^2_{-1}$ given as the union of the following three parts:
\begin{align*}
  \{-6\}\times S^1\times D^2\subset [-7,-5]\times S^1\times D^2,\hspace{1cm}&\hspace{1cm}\{6\}\times S^1\times D^2\subset [5,7]\times S^1\times D^2,\\
  \gamma \times S^1\times S^1 \subset &[-7,7]\times N(\partial E(K)), 
\end{align*}
where similar to \eqref{3rd-part}, $\gamma\subset [-7,7]\times (-1,0]$ is a properly embedded path with the endpoints $(-6,0)$ and $(6,0)$. This choice of $\gamma$ is to ensure both that $M^2_{-1}$ lies in the interior of $W^2_{-1}$, and to ensure that it is disjoint from both $M^1_{-1}$ and $M^2_0$.

The manifold $M^{2}_{-1}$ is diffeomorphic to $S^2\times S^1$. The complement of a tubular neighborhood of this submanifold of $W^2_{-1}$ has two connected components. One of the connected components, denoted $V_{-1}$, is given by removing from the product cobordism $[-7,7]\times Z_{-1}$ a tubular neighborhood of $\{0\}\times K_{-1}$; this is represented in Figure \ref{fig2} as the region bounded below the curve labeled $S^2 \times S^1$. The other connected component, denoted by $N'$, is given by removing from $N$ a tubular neighborhood of one of the fibers of the $S^1$ fibration of $N$ and is represented in Figure \ref{fig2} as the region bounded above the curve labeled $S^2 \times S^1$. (This discussion is a special case of the discussion of `spherical cuts' and their complements in \cite[Section 3.1]{CDX}.)

Label the edges of the pentagon $P$ cyclically by $e_j$ with $j\in \Z/5$, and label the common vertex of $e_j$ and $e_{j+1}$ with $v_{j,j+1}$. Then the edge $e_j$ of $P$ corresponds to the submanifold $Q_j$ of $W^2_{-1}$, where
\[
  Q_0=Z_0,\hspace{1cm}Q_1=Z_1,\hspace{1cm}Q_2=M^1_{-1},\hspace{1cm}Q_3=M^2_{-1},\hspace{1cm}Q_4=M^2_{0}.
\]
In particular, for any $j$ the submanifolds $Q_j$ and $Q_{j+1}$ are disjoint from each other. The metrics corresponding to the edge $e_j$ of $P$ are fully stretched along the submanifold $Q_j$. Furthermore, the endpoint $v_{j-1,j}$ (respectively $v_{j,j+1}$) of this edge corresponds to the metric that is also fully stretched respectively along $Q_{j-1}$ (respectively, $Q_{j+1}$). 

As we move from $v_{j-1,j}$ to $v_{j,j+1}$ the corresponding metrics vary from having large necks along $Q_{j-1}$ to small necks along $Q_{j-1}$ and then from small necks along $Q_{j+1}$ to large necks along $Q_{j+1}$, while all the metrics parametrized by this edge are broken along $Q_j$. In particular, we require that the metric parametrized by the edge $e_0$ is given by the metric $G^2_1$ on $W^2_{1}$ and the family of metrics $G^1_{-1}$ on $W^1_{-2}$. A similar requirement applies to the edge $e_1$. 

We extend the family of metrics from the boundary of $P$ first to a tubular neighborhood of the vertices $v_{j,j+1}$ in $P$ by replacing infinite length necks along $Q_j$ and $Q_{j+1}$ with finite length necks. Then we extend this family to a tubular neighborhood of $e_j$ by replacing the infinite length necks along $Q_j$ with finite length necks along this submanifold. Finally we extend this family to the rest of $P$ in a smooth way by non-broken metrics on $W^2_{-1}$, which are assumed to coincide with the chosen cylindrical metrics on the five ends of $W^2_{-1}$. (This is a special case of the construction of an associahedron of metrics in \cite[Section 4.3]{CDX}.)

\begin{figure}
  \centering
\includegraphics[width=6.5in]{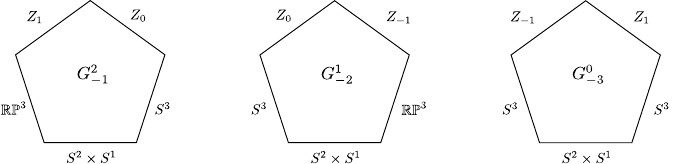}
\caption{The three pentagons of metrics. Each boundary edge corresponds to an interval of metrics broken along a given submanifold, and each edge in the diagram is labeled by the diffeomorphism type of the breaking submanifold. The vertex $v_{0,1}$ is the topmost vertex, and the edges $e_j$ proceed counter-clockwise.}\label{fig3}
\end{figure} 

The definition of the families of metrics $G^1_{-2}$ and $G^0_{-3}$ are similar, and we extend the construction of this family of metrics on $W^i_{i-3}$ to any $i$ by requiring that $G^{k+3}_{k}=G^{k}_{k-3}$ for any $k$. In particular, as a part of the construction of the family of metrics, we from a submanifold $M^{i}_{i-3}$ of $W^i_{i-3}$ which is diffeomorphic to $S^2\times S^1$, and the complement of its tubular neighborhood has two connected components. One of the connected components is again diffeomorphic to $N'$ and the other connected component, denoted by $V_i$, is the complement of a tubular neighborhood of $\{0\}\times K_{i}$ in $[-7,7]\times Z_{i}$.\\

We will later need both an explicit understanding of the cohomology of these cobordisms and a description of certain embedded surfaces. The 4-manifolds $X$, $X'$ and $X''$ all have trivial first cohomology, and their second cohomology groups are isomorphic to $\Bbb Z$. In the description of $X$ above, the cylinders 
\[
\mathfrak c_-:=[-3,-1]\times S^1\times \{0\}\subset [-3,-1]\times S^1\times D^2,\hspace{0.5cm} \mathfrak c_+:=[1,3]\times S^1\times \{0\}\subset [1,3]\times S^1 \times D^2
\] 
determine relative homology classes for $(X,\partial X)$, and the Poincar\'e dual of each of these determines a generator for the second cohomology group of $X$. We orient these cylinders using the product orientation. The intersection of $\mathfrak c_\pm$ with $Z_{\pm 1}$ is the dual knot of the Dehn surgery, and we can glue a Seifert surface of this knot to $\mathfrak c_\pm$ to obtain an oriented properly embedded surface $c_{\pm}$ representing the same cohomology class as $\mathfrak c_\pm$ in $X$. Note that the only boundary component of $c_\pm$ is in $\mathbb{RP}^3$ and the restriction of the cohomology class of $c_{\pm}$ to this middle end is the generator of $H^2(\mathbb{RP}^3)$. We may similarly form embedded oriented surfaces $c_\pm'\subset X'$ and $c_\pm''\subset X''$ which represent generators of the second cohomologies of $X'$ and $X''$. 

The intersection form of $X$ is positive-definite and the intersection forms of $X'$ and $X''$ are negative definite. Because each of $X, X', X''$ has boundary a union of rational homology spheres, a second cohomology class has a well-defined square $c^2 \in \Bbb Q$, defined using the isomorphism $H^2(X, \partial X; \Bbb Q) \to H^2(X; \Bbb Q)$. For the cohomology classes above, we have $c_\pm^2 = 1/2$, while $(c'_\pm)^2 = (c''_\pm)^2 = -1$. 

We define a pair of embedded surfaces $\hat c^i_j, \check c^i_j \subset W^i_j$ by the formulas \begin{align*}
\hat c^1_0 = \varnothing \subset W^1_0 \quad \quad &\hat c^0_{-1} = c_+ \subset W^0_{-1}, \quad \quad \quad\quad \hat c^{-1}_{-2} = c_- \subset W^{-1}_{-2}, \\
\check c^1_0 = c_+ \subset W^1_0, \quad \quad &\check c^0_{-1} = c_- \sqcup c_+ \subset W^0_{-1}, \quad \quad \check c^{-1}_{-2} = c_- \subset W^{-1}_{-2}
\end{align*}
and mod $3$ periodicity in the case that $j = i-1$. For general $i > j$, we define $\hat c^i_j = \hat c^i_{i-1} \sqcup \cdots \sqcup \hat c^{j+1}_j$, and similarly for $\check c^i_j$. This is a special case of the more general construction of \cite[Section 3.2]{CDX}. The manifold $N$ contains the embedded surfaces \begin{equation}\label{cN}
\hat c_N := \hat c^1_{-1} \cap N, \hspace{1cm}\check c_N := \check c^1_{-1} \cap N,
\end{equation} 
and the Poincar\'e dual cohomology classes satisfy $\hat c_N\cdot \hat c_N=\check c_N \cdot \check c_N = -1/2$. 

\subsection{Reducibles}\label{sec:reducibles}
The maps $f_i, g_i, h_i$ will be defined by counting isolated irreducible instantons on $W^i_j$ with respect to the family of metrics $G^i_j$. The relations for these maps will be computed by studying the ends of the 1-dimensional moduli spaces of irreducible instantons. 

To ensure that these counts are defined, we need the 0-dimensional moduli spaces of irreducible instantons to be compact. Similarly, to get the expected boundary relations, we need some control over the reducible instantons that appear in 1-dimensional moduli spaces. There was no such issue in \cite{CDX}, where the 3-manifolds were equipped with \emph{admissible bundles}, which support no reducible flat connections. In this subsection, we will analyze the reducible instantons with respect to the families of metrics $G^i_j$. Using the study of such reducibles, we will show in the following subsection that the argument of \cite{CDX} goes through with minimal change, by showing that the only reducibles which appear in the compactification of the moduli spaces of interest are precisely those used in the proof of \cite[Theorem 1.6]{CDX}. 

First, we review the definition of the relevant moduli spaces of instantons. Previously, we introduced properly embedded oriented surfaces $\hat c^i_j$ and $\check c^i_j$ in $W^i_j$, which are abbreviated to $c$ for now. For flat connections $\alpha$ on $Z_i$ and $\beta$ on $Z_j$, there is a moduli space $M(W^i_j,  c, \text{int}(G^i_j); \alpha, \beta)$ of pairs $(g, A)$, where $g \in \text{int}(G^i_j)$ and $A$ is a $g$-ASD connection on $(W^i_j, c)$ which is asymptotic to $\alpha$ on $Z_i$ and $\beta$ on $Z_j$; these pairs are considered up to gauge equivalence. 

To first extend this to a moduli space $M(W^i_j, c, G^i_j; \alpha, \beta)$, we include instantons with respect to the broken metrics. Suppose $g \in G^i_j$ is a broken metric, broken along a single connected submanifold $Y$ (so $W^i_j = W \cup_Y W'$, with $Y$ being the end intermediate to these). Then an instanton with respect to this broken metric is a pair $(A, A')$ of an instanton on $W$ and an instanton on $W'$ with respect to the relevant cylindrical-end metrics, so that the $A$ and $A'$ are asymptotic to the same flat connection along $Y$, now considered up to simultaneous gauge equivalence.

The definition is similar for metrics broken along more than one connected submanifold. We define $M(W^i_j, c, G^i_j; \alpha, \beta)$ to be the moduli space of pairs $(g, A)$, where $g \in G^i_j$ and $A$ is a $g$-instanton on $(W^i_j, c)$ asymptotic to $\alpha$ on $Z_i$ and $\beta$ on $Z_j$, considered up to gauge equivalence; when $g$ is broken, this is understood in the generalized sense described above.  Here $\alpha$ and $\beta$ are perturbed flat connections given as the critical points of the perturbed Chern--Simons functional associated to $Z_i$ and $Z_j$. We topologize this with respect to convergence on compact subsets in the complement of the broken submanifolds.

This moduli space remains noncompact. Just as we pass from $M(W, c, \text{int}(G))$ to $M(W, c, G)$ by introducing instantons with respect to these broken metrics, there exists a compactification $M^+(W, c, G; \alpha, \beta)$ which introduces \emph{broken solutions} to the ASD equations. In the statement below, if $A$ is an instanton on a cobordism $W$, the \emph{positive limit} of $A$ is the flat connection $A$ is asymptotic to on its outgoing boundary component; similarly with \emph{negative limit} and \emph{middle limit}. 

\begin{definition}\label{def:broken-reducible}
Let $(W,c): Y \xrightarrow{L} Y'$ be a cobordism with a middle end. Suppose $W$ is equipped with an unbroken Riemannian metric $g$. A \textbf{(possibly) broken instanton} on $(W,c)$ is a sequence 
\[A = (B_1, \cdots, B_n,A_W, C_1, \cdots, C_m, B'_1, \cdots, B'_\ell)\] of connections satisfying the following conditions:
\begin{enumerate}[label=(\alph*)]
\item The connection $B_i$  (resp. $B'_i, C_i$) is a nonconstant instanton on $\Bbb R \times Y$ (resp. $\Bbb R \times Y'$, $\Bbb R \times L$) considered modulo translation, while $A_W$ is an instanton on $(W,c)$. 
\item For $1 \le i < n$, the positive limit of $B_i$ is equal to the negative limit of $B_{i+1}$, and similarly for $C_i$ and $B'_i$.
\item The negative limit of $A_W$ is the positive limit of $B_n$, while the middle limit of $A_W$ is the negative limit of $C_1$ and the positive limit of $A_W$ is the negative limit of $B'_1$. 
\end{enumerate}
The \textbf{outer limits} of a broken instanton are given by the negative limit of $B_1$ and the positive limit of $B'_\ell$. If every instanton in the sequence is irreducible and positive and negative limiting flat connections of $A_W$, $B_i$ and $B_i'$ are all irreducible, we say this broken instanton is \textbf{fully irreducible}.
\end{definition}

A similar definition holds in the case that $W$ is equipped with a broken metric $g$. We will say `instanton' to refer to a possibly-broken instanton, say `unbroken instanton' when we want to assume $n = m = \ell = 0$ (including in the case that $g$ is a broken metric), and say `broken instanton' for an instanton with $n+m+\ell > 0$. The \emph{intermediate limits} of a broken instanton are the $n+m+\ell$ flat connections obtained as the positive limits of $B_j$ and negative limits of $B'_j, C_j$.

Let $\beta$ be a connection given as the critical point of the perturbed Chern--Simons functional of a 3-manifold $Y$. There are two non-negative integers associated to $\beta$. The first one, denoted $h^0_\beta$ is $\dim \Gamma_\beta$, is the dimension of the stabilizer of $\beta$ under the gauge group action. When $\beta$ is irreducible, this is zero; when $\beta$ has central holonomy this dimension is three, and when $\beta$ has abelian but not central holonomy this dimension is one. The second integer $h^1_\beta$ is the dimension of the null space of the Hessian of the perturbed Chern--Simons functional at $\beta$. In the case that the perturbation is trivial, $h^1_\beta$ agrees with the dimension of the Zariski tangent space of the corresponding character variety at $\beta$. We say $\beta$ is non-degenerate if $h^1_\beta=0$. Most of the connections that we encounter below are non-degenerate; the only exception occurs in Subsection \ref{hi-const}. Because of this, we assume that all limits of instantons appearing in the rest of this subsection are non-degenerate.

We write $r(A)$ for the sum of $h^0_\beta$ over all intermediate limits $\beta$ of $A$; notice that $r(A) > 0$ if and only if some intermediate limit of $A$ is reducible.  

\begin{definition}
Let $A$ be an instanton with non-degenerate limits. Then the \textbf{index} of $A$ is defined to be $$i(A) = r(A) + i(A_W) + \sum_{j=1}^n i(B_j) + \sum_{j=1}^m i(C_j) + \sum_{j=1}^\ell i(B'_j),$$ where $i$ is the index of the ASD operator. 
\end{definition}

This coincides with the index of the connection obtained by gluing the constintuent instantons of $A$ into a connection on $W$. These definitions are relevant as follows. We define the space $M^+(W, c, G; \alpha, \beta)_d$ to be the space of pairs $(g, A)$, where $g \in G$ is a (possibly broken) metric and $A$ is  a (possibly broken) instanton on $(W, c)$ with index $d - \dim G$, considered up to gauge equivalence. This is given the topology of `chain-convergence', as in \cite[Section 5.1]{DonBook}. This moduli space has expected dimension $d$. It is established as  \cite[Proposition 5.5]{DonBook} that if $M^+(W, c, G; \alpha, \beta)_{d-8n}$ is empty for all $n > 0$, the moduli space $M^+(W, c, G; \alpha, \beta)_d$ is compact.

We may choose a small perturbation of the ASD equation with the following properties: it is zero on the middle ends and equal to a chosen small perturbation on the other ends, and all unbroken irreducible instantons over $G$ are cut out transversely in this family. Thus any fully irreducible instanton on $(W,c,G)$ has index $i(A) \ge -\dim G$, with equality if and only if $A$ is an unbroken instanton supported by an unbroken metric $g \in \text{int}(G)$. The cobordisms we use either have $b^+(W) = 0$ or are equipped with a non-trivial admissible bundle, so central connections are cut out transversely and abelian ASD connections are cut out transversely within the reducible locus. Thus we may also choose our perturbation to vanish near the central connections and within the reducible locus, so that the set of reducible ASD connections is unchanged by perturbation.

We will define our maps by counting the number of elements in $M(W^i_j,c, G^i_j; \alpha, \beta)_0$ for $c = \hat c^i_j$ or $c = \check c^i_j$. To ensure these counts are well-defined, we will need to ensure that this space coincides with its compactification; that is, that there are no broken instantons of index $i(A) = -\dim G$. Because this is automatic when $A$ is fully irreducible, our goal is to rule out the possibility of reducibles.

When verifying the boundary relations for these maps, we will investigate the moduli space $M(W, c, G; \alpha, \beta)_1$. Our boundary relations mostly involve counts of fully irreducible broken instantons, and we therefore need to rule out the possibility of small index instantons which are not fully irreducible.
Because reducible flat connections on a cylinder are constant, there are two ways a connection could fail to be fully irreducible: 

\begin{itemize}
\item It could be possible that all components of $A$ are irreducible, but some interior limit is reducible; 
\item The connection $A_W$ could be reducible.
\end{itemize}

The first type is easy to rule out.

\begin{lemma}\label{lemma:no-irred-only}
Suppose $A$ is a broken instanton on $(W^i_j, \hat c^i_j)$ or $(W^i_j, \check c^i_j)$ with respect to a metric in $\textup{int}(G^i_j)$. If the constituent instantons are all irreducible but $A$ is not fully irreducible, then $i(A) \ge 2 - (i - j - 1)$. 
\end{lemma}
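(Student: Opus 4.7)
The plan leverages two ingredients: every reducible flat connection appearing as an intermediate limit in this setup is central (and hence has $h^0 = 3$), and standard transversality gives lower bounds on the indices of the constituent pieces. For the first, I would classify the reducible flat connections on the relevant $3$-manifolds. On each integer homology sphere $Z_i, Z_j$, the only reducible flat $SU(2)$-connection up to gauge is the trivial one, which is central with $h^0 = 3$; since the perturbation fixed earlier in this section vanishes near the reducible locus, this classification is unaffected. On an $S^3$ middle end the trivial connection is again the only flat connection. On an $\mathbb{RP}^3$ middle end, the surfaces $\hat c^i_j$ and $\check c^i_j$ each restrict to the generator of $H^2(\mathbb{RP}^3)$, so the induced $U(2)$-bundle is non-trivially admissible and supports no reducible flat connections at all. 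Hence any reducible intermediate limit $\beta$ of $A$ satisfies $h^0_\beta = 3$.

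Applying the hypothesis: because every constituent of $A$ is irreducible, the failure of full irreducibility must manifest as a reducible positive or negative limit of $A_W$, of some $B_s$, or of some $B'_s$. These limits all lie on $Z_i$ or $Z_j$, and the outer limits are irreducible by assumption (they are generators of the chain complexes), so the offending reducible is an intermediate limit on $Z_i$ or $Z_j$, contributing $3$ to $r(A)$ by the classification above---so $r(A) \ge 3$. Standard transversality supplies bounds on the pieces: every nonconstant irreducible cylinder instanton has $i(\,\cdot\,) \ge 1$ (since its mod-$\mathbb{R}$ moduli space has non-negative expected dimension), and $A_W$ has $i(A_W) \ge -(i-j-1)$ (since it is irreducible in the $(i-j-1)$-dimensional family $G^i_j$). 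Since $A$ is broken, $n + m + \ell \ge 1$, so the index formula yields
\[
i(A) \,=\, r(A) + i(A_W) + \sum_{s=1}^n i(B_s) + \sum_{s=1}^m i(C_s) + \sum_{s=1}^\ell i(B'_s) \;\ge\; 3 - (i-j-1) + (n+m+\ell) \;\ge\; 4 - (i-j-1),
\]
which is stronger than the claimed bound $2 - (i-j-1)$.

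The only step requiring genuine care is the classification of reducible flat connections on the middle ends---in particular, that the non-trivially admissible $U(2)$-bundle on each $\mathbb{RP}^3$ middle end rules out reducibles there---together with confirming that the transversality bound for irreducible $A_W$ in the family $G^i_j$ remains valid when $A_W$ has a reducible limit (so that the usual index formula and expected-dimension count continue to apply). Both points should be straightforward given the perturbation setup already established earlier in this section.
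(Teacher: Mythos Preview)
Your argument is correct and follows the same structure as the paper's: bound $i(A_W) \ge -(i-j-1)$ by transversality of irreducibles in the family, bound each cylinder component below by $1$, and use that some intermediate limit is reducible so $r(A) > 0$. The only difference is that the paper stops at the crude estimate $r(A) \ge 1$, which already yields the stated bound $2-(i-j-1)$; your more careful classification---observing that the offending reducible limit must lie on the integer homology sphere $Z_i$ or $Z_j$ and is therefore central---gives $r(A) \ge 3$ and hence the stronger conclusion $i(A) \ge 4-(i-j-1)$, though this extra strength is not used anywhere downstream. Your discussion of the $\mathbb{RP}^3$ middle ends is correct but unnecessary here, since the definition of \emph{fully irreducible} constrains only the positive and negative limits of $A_W$, $B_s$, $B'_s$, all of which lie on $Z_i$ or $Z_j$; and the transversality concern you flag is already covered by the perturbation setup, which cuts out all unbroken irreducible instantons transversely regardless of their limiting flat connections.
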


\begin{proof}
As discussed above, all irreducible instantons on $(W^i_j, c, G^i_j)$ have $i(A_W) \ge - \dim G^i_j = -(i-j-1)$, and all irreducible instantons on a cylinder have $i(B) \ge 1$. Any broken instanton has at least one component which is an instanton on a cylinder, and because $A$ is not fully reducible we have $r(A) > 0$, so $i(A) \ge 1 + 1 -(i-j-1)$. 
\end{proof}

As for the second type, observe that if $W: Y \xrightarrow{L} Y'$ is a cobordism between integer homology spheres (with $L$ a disjoint union of $S^3$s and $\Bbb{RP}^3$s) and $A_W$ is reducible, then the positive and negative limits of $A_W$ are also reducible, hence central because $Y, Y'$ are integer homology spheres. Therefore, any broken instanton with irreducible outer limits for which $A_W$ is reducible has at least two components which are instantons on cylinders, as well as two central interior limits, so $$i(A) \ge 1 + 3 + i(A_W) + 3 + 1 = 8 + i(A_W).$$ It suffices to give a suitable lower bound on the minimal index of such $A_W$. 

\begin{lemma}\label{lemma:min-index}
If $A_W$ is a reducible unbroken instanton on $(W^i_j, \hat c^i_j)$ or $(W^i_j, \check c^i_j)$ with respect to a metric in $G^i_j$, then $i(A_W) \ge -4$.
\end{lemma}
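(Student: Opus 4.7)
The plan is to combine the topological constraint imposed by anti-self-duality with an explicit APS index computation, paralleling the analysis of the canonical reducible $A_c$ in the proof of Proposition \ref{prop:IP-morphism}.

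Since $A_W$ is reducible, the $U(2)$-bundle $E_c$ (with $c_1(E_c) = PD[c]$ for $c$ equal to $\hat c^i_j$ or $\check c^i_j$) splits as $L \oplus L'$ with fixed determinant, and is classified up to gauge by the twisting class $\delta := c_1(L) - c_1(L') \in H^2(W^i_j; \mathbb{Z})$, subject to $\delta \equiv [c] \pmod 2$. Chern--Weil theory identifies the topological energy as $\mathcal E(A_W) = -\delta^2/4$, where the self-intersection is evaluated rationally via $H^2(W^i_j, \partial W^i_j; \mathbb Q) \to H^2(W^i_j; \mathbb Q)$, an isomorphism because all boundary components of $W^i_j$ are rational homology spheres. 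The ASD equation forces $\mathcal E(A_W) \ge 0$, and hence $\delta^2 \le 0$.

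Applying the APS index formula to the ASD operator at $A_W$, exactly as in the derivation of $i(W, c; \theta, \theta') = -2c^2 - 3$ in the proof of Proposition \ref{prop:IP-morphism}, but with $\delta$ in place of $c$ and with extra contributions from the middle ends, one obtains
\[
i(A_W) = -2 \delta^2 - 3 + \epsilon,
\]
where $\epsilon$ is the sum of APS boundary corrections at the middle ends. On each $S^3$ middle end the restriction of $A_W$ is the trivial flat connection; on each $\mathbb{RP}^3$ middle end, $A_W$ restricts to the unique flat $SO(3)$-connection with nontrivial $w_2$ (the only flat connection compatible with the bundle data). Combined with $-2\delta^2 \ge 0$ from the previous paragraph, the conclusion $i(A_W) \ge -4$ will follow once one establishes $\epsilon \ge -1$.

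The main obstacle is the explicit verification of the bound $\epsilon \ge -1$, which reduces to a computation of $\rho$-invariants of the reducible flat $SO(3)$-connection on $\mathbb{RP}^3$ (the $S^3$ contributions vanish under the standard normalization, being absorbed into the same $-3$ term that accounts for the two integer homology sphere ends). For the cobordisms $W^i_j$ relevant here, at most one $\mathbb{RP}^3$ middle end appears per basic piece $X$, so the total correction $\epsilon$ is a bounded rational number determined entirely by the topological type of $W^i_j$; after careful bookkeeping of sign conventions, the bound $\epsilon \ge -1$ follows from direct calculation.
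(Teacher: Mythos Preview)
Your index formula $i(A_W) = -2\delta^2 - 3 + \epsilon$ is incorrect: it omits the term $-3b^+(W)$. The formula $i(W,c;\theta,\theta') = -2c^2 - 3$ you borrow from Proposition~\ref{prop:IP-morphism} was derived under the standing hypothesis $b^+(W) = 0$ there; the general index formula (see \eqref{eqn:index} in the paper) reads
\[
i(A_W) = 8\mathcal E(A_W) - 3(1+b^+(W)) + \tfrac12\sum_i (3 - h_{\alpha_i} + \rho(\alpha_i)).
\]
This matters because the basic cobordism $X = W^{-1}_{-2}$ has \emph{positive-definite} intersection form, so $b^+(X) = 1$, and every composite $W^i_j$ with $i-j \le 3$ that contains a copy of $X$ (which is most of them) has $b^+ = 1$. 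On those cobordisms your formula is off by $3$, and the bound $-2\delta^2 \ge 0$ together with any reasonable $\epsilon$ only yields $i(A_W) \ge -6 + \epsilon$, too weak.

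The second missing ingredient is what rescues the $b^+ = 1$ case in the paper: one must show $\mathcal E(A_W) > 0$ strictly whenever there is an $\mathbb{RP}^3$ middle end. The paper does this by observing that a projectively flat reducible would force $\delta = 2x - PD(c)$ to be torsion, which is impossible since $PD(c)$ is a primitive class on those $W^i_j$. Combined with the correct middle-end contribution $\epsilon = (\text{number of }\mathbb{RP}^3\text{ ends}) \ge 0$ (one computes $\rho(\alpha) = 0$ on $S^3$ and $\mathbb{RP}^3$, so each $\mathbb{RP}^3$ end contributes $\tfrac12(3-1) = 1$, not something $\ge -1$), one gets $i(A_W) > -6 + 1 = -5$, hence $i(A_W) \ge -4$. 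Your proposal contains neither the $b^+$ bookkeeping nor this primitivity/strict-positivity argument, so as written it does not establish the bound in the cases where it is hardest.
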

\begin{proof}
In our case, where $b_1(W) = 0$, the index formula of \cite[Equation (82)]{MMR} simplifies to the following equation, where we write $L = L_1 \sqcup \cdots \sqcup L_n$ with $A_W|_{L_i} = \alpha_i$:
\begin{equation}\label{eqn:index}
i(A_W) = 8\mathcal E(A_W) - 3(1 + b^+(W)) + \frac 12 \sum_{i=1}^n (3 - h_{\alpha_i} + \rho(\alpha_i)).
\end{equation} 
The quantity $h_{\alpha_i}$ is $3$ when $\alpha_i$ is central, $1$ when $\alpha_i$ is abelian and $0$ when $\alpha_i$ is irreducible. For us, $(L_i, w_i)$ are all either $(S^3, \varnothing)$ or $(\mathbb{RP}^3, w)$ for $[w] \neq 0\in H_1(\mathbb{RP}^3; \mathbb Z/2)$, and the quantity $\rho(\alpha_i)$ is zero for any flat connection on $S^3$ or $\Bbb{RP}^3$ by explicit computation, e.g. \cite[Proposition 2.12]{APS2}. 

The manifold $S^3$ supports only the central flat connection, and $(\mathbb{RP}^3, w)$ with $[w] \ne 0 \in H_1(\mathbb{RP}^3; \Bbb Z/2)$ supports exactly one abelian flat connection. In particular, the sum over $i$ gives twice the number of $\mathbb{RP}^3$ middle ends.

We now investigate the energy term, which we claim is nonnegative and strictly positive when $W$ has an $\mathbb{RP}^3$ middle end. 

As $A_W$ is an instanton, we have $\mathcal E(A_W) \ge 0$ with equality if and only if $A_W$ is projectively flat; equivalently, if and only if the induced connection on the adjoint $SO(3)$-bundle is flat. If $A_W$ is reducible, there exists a parallel splitting $E_c \cong L_x \oplus L_y$, where $x = c_1(L_x)$ and therefore $PD(c) = x+y$. The induced connection on the adjoint bundle respects a splitting $\text{ad}(E_c) \cong \mathbb R \oplus L'$, where $L' = L_x \otimes_{\mathbb C} L_y^{-1}$. Because this connection is flat, it follows that $L'$ is a flat bundle, and hence that $2x - PD(c) = x-y$ is a torsion cohomology class on $W$. However, when one of our manifolds $W$ has a $\mathbb{RP}^3$ end, the quantity $2x - PD(c)$ is never torsion, as $PD(c)$ is a primitive class.

For the manifolds $W^i_j$ with $i-j \le 3$, either $b^+(W) = 0$ or we have that $b^+(W) = 1$ and $L$ is a disjoint union of $\Bbb{RP}^3$ and some number of copies of $S^3$ with $c|_{\Bbb{RP}^3}$ is non-trivial. In the former case, the index computation gives $i(A_W) \ge -3$. In the latter case, the index computation gives $i(A_W) > -6 + 1 = -5$ because $\mathcal E(A_W)$ is positive.
\end{proof}

The previous two lemmas will be the main way we \emph{rule out} broken instantons which are not fully irreducible. However, there are some places in the argument of \cite{CDX} where reducible connections do enter in an essential way. In particular, we will need to understand the reducibles for the pairs $(N, \hat c_N)$ and $(N, \check c_N)$, where $\hat c_N$ and $\check c_N$ are given in \eqref{cN}.

\begin{lemma}\label{lemma:N-reducibles}
Up to the action of the gauge group, there is a unique minimal-index reducible instanton $\hat A_N$ on $(N,\hat c_N)$, for which $i(\hat A_N) = -1$. Similarly, there is a unique minimal-index reducible instanton $\check A_N$ on $(N,\check c_N)$ which satisfies $i(\check A_N) = -1$.
\end{lemma}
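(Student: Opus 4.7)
The plan is to classify reducible ASD connections on the pairs $(N,\hat c_N)$ and $(N,\check c_N)$ by the topological splittings of $E_c$, apply the index formula \eqref{eqn:index} to each, and verify that the minimum index is $-1$ and is attained uniquely up to gauge.

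First I would pin down the cohomology of $N$. The Gysin sequence of the $S^1$-fibration $\pi:N\to D$ over the base $D$ (a thrice-punctured $3$-ball, homotopy equivalent to $S^2\vee S^2$) gives $H^2(N;\mathbb{Z})\cong H^2(D;\mathbb{Z})/\mathbb{Z}\cdot e\cong\mathbb{Z}$; let $\gamma$ be a generator. Since $\partial N=S^3\sqcup S^3\sqcup\mathbb{RP}^3$ has $H^1=0$ and $H^2=\mathbb{Z}/2$, the long exact sequence of the pair identifies $H^2(N,\partial N;\mathbb{Z})$ with the kernel of the restriction $H^2(N;\mathbb{Z})\to H^2(\partial N;\mathbb{Z})=\mathbb{Z}/2$. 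If this restriction were zero, then $H^2(N,\partial N;\mathbb{Z})\cong\mathbb{Z}\gamma$ and unimodularity of the cup product pairing $H^2(N)\otimes H^2(N,\partial N)\to\mathbb{Z}$ would force $\gamma^2\in\mathbb{Z}$, contradicting $\hat c_N\cdot \hat c_N=-1/2$. Hence the restriction is surjective, $H^2(N,\partial N;\mathbb{Z})=\mathbb{Z}\cdot 2\gamma$, and the pairing gives $2\gamma^2=\pm 1$; the hypothesis then pins down $\gamma^2=-1/2$. In particular, $\mathrm{PD}(\hat c_N)$ and $\mathrm{PD}(\check c_N)$ equal $\pm\gamma$ and so generate $H^2(N;\mathbb{Z})$.

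Next I would parameterize the reducibles. Any reducible $U(2)$-instanton on $(N,E_c)$ corresponds to an unordered splitting $E_c=L_x\oplus L_y$ with $x,y\in H^2(N;\mathbb{Z})$ and $x+y=\mathrm{PD}(c)$, and on each such splitting the projective ASD equation reduces to the ASD equation on the line bundle $L_{x-y}$. The asymptotic limits are forced by the topology: trivial on the two $S^3$ ends, and the unique abelian flat connection on the $\mathbb{RP}^3$ end (since $(x-y)\equiv\mathrm{PD}(c)\pmod 2$ restricts to the generator of $H^2(\mathbb{RP}^3;\mathbb{Z}/2)$). Because the boundary components of $N$ are rational homology $3$-spheres with isolated flat connections, such an ASD connection on $L_{x-y}$ is unique up to gauge. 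Chern--Weil yields $\mathcal E(A)=-(x-y)^2/4$. Combining $b^+(N)=0$ (from $\gamma^2<0$ and $\dim_{\mathbb R} H^2(N;\mathbb{R})=1$) with the end correction $\tfrac{1}{2}((3-3)+(3-3)+(3-1))=1$, formula \eqref{eqn:index} reads
\[
i(A)\;=\;8\mathcal E(A)\;-\;3\;+\;1\;=\;-2(x-y)^2-2.
\]

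Finally I would minimize. Writing $x=j\gamma$ and $\mathrm{PD}(c)=\gamma$ (the case $-\gamma$ is identical by relabeling), one computes $(x-y)^2=(2j-1)^2\gamma^2=-(2j-1)^2/2$, so that $i(A)=(2j-1)^2-2$. This equals $-1$ precisely when $2j-1=\pm 1$, namely at $j=0$ and $j=1$. These represent the same unordered splitting $\{0,\gamma\}$, which is just the standard one $E_c=L_c\oplus\underline{\mathbb C}$, and therefore give gauge-equivalent connections. Every other splitting obeys $(2j-1)^2\ge 9$, forcing $i(A)\ge 7$. Hence there is a unique reducible of minimal index $-1$ on $(N,\hat c_N)$, and likewise on $(N,\check c_N)$, as claimed. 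The main technical point to verify carefully is the unimodularity computation yielding $\gamma^2=-1/2$ exactly; without this sharp identification of the intersection form, exotic splittings could in principle produce fractional index values.
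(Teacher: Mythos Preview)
Your proof is correct and follows the same approach as the paper: classify reducibles by unordered splittings $\{x,y\}$ of $PD(c)$ in $H^2(N;\mathbb Z)\cong\mathbb Z$, apply the index formula \eqref{eqn:index} to obtain $i(A)=(2j-1)^2-2$, and observe that the minimum value $-1$ is achieved uniquely at the splitting $\{0,\gamma\}$. Your Gysin-sequence and long-exact-sequence verification that $H^2(N;\mathbb Z)\cong\mathbb Z$ with generator squaring to $-1/2$ is more detailed than the paper (which simply asserts these facts), but the core computation is identical.
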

\begin{proof}
By Hodge theory and the fact that $b^+(N) = 0$, reducible solutions to the ASD equation on $(N,\hat c_N)$ are in bijection with pairs $\{x,y\} \subset H^2(N;\Bbb Z)$ for which $x+y = PD(\hat c_N)$. Because $H^2(N;\Bbb Z) \cong \Bbb Z$ and $\hat c_N$ is a generator, there is one reducible solution $\{n\hat c_N, (1-n)\hat c_N\}$ for each integer $n \ge 1$.

As discussed above, the quantity $\mathcal E(A_W)$ in (\ref{eqn:index}) for such a reducible is precisely $-2(x-y)^2$. The surface $\hat c_N$ represents a generator of $H^2(N; \Bbb Z)$, for which $\hat c_N^2 = -1/2$. Any pair $\{n\hat c_N, (1-n)\hat c_N\}$ gives $-2(x-y)^2 = (2n-1)^2$, which is minimized for $n=1$. There is thus one minimal-index reducible, which we denote by $\hat A_N$. Because $(N, \hat c_N)$ has boundary $(S^3, \varnothing) \sqcup (S^3, \varnothing)\sqcup (\mathbb{RP}^3,w)$, we see that $\hat A_N$ restricts trivially to two boundary components, and to an abelian connection on the third. Therefore, (\ref{eqn:index}) gives $$i(\hat A_N) = 1 -3 + 1 = -1.$$
This completes the proof for $(N,\hat c_N)$. The same argument applies to $(N,\check c_N)$. 
\end{proof}

\subsection{Proof of Proposition \ref{prop:CDX-triangle}}\label{sec:proof-36}
Here we establish the existence of the maps $f_i, g_i, h_i$, as well as their boundary relations. By the discussion of Section \ref{sec:triangle-detection}, this gives us the desired exact triangle. 

The general principle is as follows. If $(W,c): Y \to Y'$ is a cobordism equipped with a family of metrics $G$, the solutions to the $G$-ASD equation are cut out transversely and there are no reducible instantons with index less than $-\dim G$, then counting instantons over $(W,c,G)$ with index equal to $-\dim G$ gives rise to a well-defined map $C_*(Y) \to C_*(Y')$ satisfying the relation $$d_{Y'}\circ (W,c,G)_* + (W,c,G)_*\circ d_Y = (W, c,\partial G)_*.$$ The relation is derived by inspecting the ends of the moduli space of instantons $M^+(W, c, G; \alpha, \beta)_1$ with $\alpha$ and $\beta$ irreducible.

Our task is to verify that instantons supported by the interior of each face of $G$ are cut out transversely and to compute the map $(W, c,\partial G)_*$.

\subsubsection{The maps $f_i$}
Each manifold $W^i_{i-1}$ is equipped with an unbroken metric. The manifold $W^2_1$ has $b^+(W) > 0$ and a geometric representative $c \subset W^2_1$ for a non-trivial admissible $U(2)$-bundle. A combination of Lemmas \ref{lemma:no-irred-only} and \ref{lemma:min-index} imply that the moduli spaces of irreducible instantons on $W^i_{i-1}$ of index $0$, with irreducible outer limits, are compact, and thus consists of finitely many points. 

We can therefore define 
\begin{align*}f_2 &= (W^2_1, G^2_1, \hat c^2_1)_*: C_*(Z_{-1}) \to C_*(Z_1),  \\
f_1 &= (W^1_0, G^1_0, \hat c^1_0)_* \oplus (W^1_0, G^1_0, \check c^1_0)_*: C_*(Z_1) \to C_*(Z_0) \oplus C_*(Z_0) \\
f_0 &= (W^0_{-1}, G^0_{-1}, \hat c^0_{-1})_* \oplus -(W^0_{-1}, G^0_{-1}, \check c^0_{-1}): C_*(Z_0) \oplus C_*(Z_0) \to C_*(Z_{-1})\end{align*}
to be the maps which count index zero irreducible instantons with irreducible outer limits on the relevant cobordisms. 

The moduli spaces of index $1$ consists of fully irreducible solutions by Lemmas \ref{lemma:no-irred-only} and \ref{lemma:min-index}, where standard gluing techniques apply. We therefore obtain the following result.

\begin{proposition}\label{prop:f}
The maps $f_i$ are well-defined chain maps.
\end{proposition}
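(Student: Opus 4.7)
The plan is to follow the general principle stated at the start of this subsection: establish well-definedness of each $f_i$ by showing that the index-zero moduli space of fully irreducible instantons on $(W^i_{i-1}, c, G^i_{i-1})$ with irreducible outer limits is a finite set of points, and derive the chain map identity from the standard end analysis of the corresponding index-one moduli space. Since each $G^i_{i-1}$ consists of a single unbroken metric, $\dim G^i_{i-1} = 0$, so there is no family to degenerate and the argument reduces to the two classical ingredients of compactness and gluing.

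For well-definedness, I would rule out the appearance of reducible or non-fully-irreducible broken configurations in the compactification $M^+(W^i_{i-1}, c, G^i_{i-1}; \alpha, \beta)_0$ by invoking the bounds from the previous subsection. Lemma \ref{lemma:no-irred-only} (with $i - j = 1$) forces any broken configuration whose constituent instantons are irreducible but whose intermediate limit is reducible to have index at least $2$, while Lemma \ref{lemma:min-index} and the paragraph preceding it force any broken configuration with reducible $A_W$ to have index at least $8 + i(A_W) \geq 4$. For $W^2_1$ the admissible bundle carried by $\hat c^2_1$ rules out reducible ASD connections entirely. In every case both obstructions exceed zero, so the only configurations of index zero are unbroken fully irreducible instantons; these are cut out transversely by the perturbation recalled in the preamble to this section, and combined with \cite[Proposition 5.5]{DonBook} this gives compactness and hence finiteness of the count.

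For the chain map relation $d \circ f_i + f_i \circ d = 0$, I would pass to the $1$-dimensional moduli space $M^+(W^i_{i-1}, c, G^i_{i-1}; \alpha, \beta)_1$, whose ends arise purely from trajectory breaking since the family of metrics is a point. The same two index bounds again rule out ends involving reducible intermediate limits or reducible $A_W$, so every end is a fully irreducible broken instanton consisting of an index-zero instanton on $W^i_{i-1}$ glued to an index-one cylindrical instanton on either $\mathbb R \times Z_i$ or $\mathbb R \times Z_{i-1}$. Standard gluing identifies each such end with a contribution to $f_i \circ d$ or $d \circ f_i$, and counting the ends of a compact $1$-manifold modulo $2$ yields the desired identity.

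The main technical point I expect to need to verify carefully is that the ASD perturbation can be chosen to be transverse on the fully irreducible locus while remaining trivial on the full reducible locus of $W^1_0$ and $W^0_{-1}$, so that the indices of the abelian (non-central) reducibles on those cobordisms are not altered by the perturbation and the bounds from Lemmas \ref{lemma:no-irred-only} and \ref{lemma:min-index} remain valid. The preamble to this section already indicates how such a perturbation is constructed, so this is essentially bookkeeping rather than a new obstacle.
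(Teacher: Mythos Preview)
Your proposal is correct and follows essentially the same approach as the paper: invoke Lemmas \ref{lemma:no-irred-only} and \ref{lemma:min-index} to exclude non-fully-irreducible configurations from the index-$0$ and index-$1$ moduli spaces, then appeal to standard gluing. One minor point: your assertion that the admissible bundle on $W^2_1$ ``rules out reducible ASD connections entirely'' is stronger than what the paper actually uses (namely that any reducible there has $i(A_W) \ge -4$ via Lemma \ref{lemma:min-index}, since $\mathcal E(A_W) > 0$), but since you also cite that lemma your argument goes through regardless.
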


\subsubsection{The maps $g_i$}
Now we consider the 1-parameter family of metrics $G^i_{i-2}$ on $W^i_{i-2}$. We count instantons with irreducible outer limits and of index $-1$ with respect to this family to define the maps $g_i$, and use the moduli space of index $0$ instantons with irreducible outer limits to prove the desired chain homotopy relation. The combination of Lemmas \ref{lemma:no-irred-only} and \ref{lemma:min-index} imply that for any \emph{unbroken} metric in $\text{int}(G^i_{i-2})$, these moduli spaces consist of fully irreducible connections, but the broken metrics require somewhat more care. In fact, it is important to the argument that one boundary point of $G^1_{-1}$ \emph{does} support  an broken instanton with a reducible component. 

\begin{lemma}\label{lemma:g-bdry0}
Suppose $A$ is an instanton with irreducible outer limits on $(W^i_{i-2}, \hat c^i_{i-2})$ or $(W^i_{i-2}, \check c^i_{i-2})$ with respect to the metric broken along $Z_{i-1}$. If $A$ is not fully irreducible, then $i(A) \ge 1$. 
\end{lemma}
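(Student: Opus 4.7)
The broken metric presents $W^i_{i-2}$ as a union $W^i_{i-1} \cup_{Z_{i-1}} W^{i-1}_{i-2}$ of the two shorter cobordisms, each carrying its fixed metric. Any instanton $A$ with respect to this broken metric decomposes as a main pair $(A_1, A_2)$, with $A_1$ on $(W^i_{i-1}, G^i_{i-1})$ and $A_2$ on $(W^{i-1}_{i-2}, G^{i-1}_{i-2})$, sharing a common asymptotic limit $\gamma$ at $Z_{i-1}$, possibly together with non-constant cylindrical pieces at the outer ends $Z_i, Z_{i-2}$, at the middle ends $L_i, L_{i-1}$, and at the new end at $Z_{i-1}$. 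My plan is to begin by recording the index additivity formula
\[
i(A) \;=\; i(A_1) + i(A_2) + h^0_\gamma + \sum_j \bigl( i(B_j) + h^0_{\beta_j} \bigr),
\]
where the sum runs over all cylindrical pieces $B_j$ with intermediate limits $\beta_j$; the term $h^0_\gamma$ accounts for the gauge stabilizer at the metric-breaking limit exactly as $r(A)$ accounts for stabilizers at cylinder-breaking limits. This is a standard gluing-theoretic input (cf.\ \cite[Ch.~5]{DonBook}) and is the main step where some care is required.

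With this formula in hand, the proof splits on the source of non-full-irreducibility. Suppose first that $\gamma$ is reducible. Since $Z_{i-1}$ is an integer homology sphere, $\gamma$ must be central, contributing $h^0_\gamma = 3$. In the generic subcase where $A_1$ and $A_2$ are both irreducible, transversality for irreducible instantons on $(W^i_{i-1}, G^i_{i-1})$ and $(W^{i-1}_{i-2}, G^{i-1}_{i-2})$ gives $i(A_1), i(A_2) \ge 0$, hence $i(A) \ge 3$. In the subcase where one of the main pieces, say $A_1$, is reducible, its limit at the outer end $Z_i$ is forced to be central, and therefore an additional irreducible cylindrical piece on the $Z_i$ end is required to connect this central limit to the fixed irreducible outer limit. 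That extra cylinder contributes $i(B) + h^0 \ge 1 + 3 = 4$; combined with the lower bound $i(A_1) \ge -4$ from Lemma~\ref{lemma:min-index} and $i(A_2) \ge 0$, this still gives $i(A) \ge 1$.

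Suppose instead that $\gamma$ is irreducible. Then neither $A_1$ nor $A_2$ can be reducible, since the structure-group reduction of a reducible ASD connection propagates to each of its asymptotic limits. Hence $A_1, A_2$ are both irreducible with $i(A_1), i(A_2) \ge 0$, and the failure of full irreducibility must originate from some cylindrical piece $B_j$ whose intermediate limit $\beta_j$ is reducible. The weakest subcase is $\beta_j$ abelian on an $\mathbb{RP}^3$ middle end, where $h^0_{\beta_j} = 1$; even then $i(B_j) + h^0_{\beta_j} \ge 1 + 1 = 2$, while at homology-sphere positions (the outer ends or the $S^3$ middle ends) the corresponding contribution is $\ge 1 + 3 = 4$. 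In either event $i(A) \ge 2 \ge 1$, completing the case analysis. The main obstacle, as noted above, is the careful bookkeeping in the index identity when both a metric breaking and potentially reducible intermediate limits are present; once that is recorded, the cases are handled by the elementary estimates used in \cite{CDX}.
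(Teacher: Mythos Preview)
Your approach is essentially the same as the paper's: both arguments do a case analysis on which components or intermediate limits are reducible, invoking Lemma~\ref{lemma:min-index} for the index lower bound on reducible cobordism pieces and the transversality bound $i \ge 0$ (or $\ge 1$ on cylinders) for irreducible ones. The paper splits directly on whether the two cobordism components $A^i_{i-1}, A^{i-1}_{i-2}$ are reducible, whereas you split first on whether the limit $\gamma$ at $Z_{i-1}$ is reducible; this is only a difference in bookkeeping.

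There is one genuine omission in your case analysis. In sub-case~1b you write ``one of the main pieces, say $A_1$, is reducible'' and then invoke $i(A_2) \ge 0$, which tacitly assumes $A_2$ is irreducible. The case where \emph{both} $A_1$ and $A_2$ are reducible is never addressed. The paper handles this case explicitly: each reducible piece has central outer limit and therefore forces a cylinder at that outer end, giving
\[
i(A) \ge 1 + 3 + (-4) + 3 + (-4) + 3 + 1 = 3.
\]
Your own estimate for~1b extends immediately to cover this---a reducible $A_2$ contributes $i(A_2) \ge -4$ rather than $\ge 0$, but it also forces a cylinder at $Z_{i-2}$ contributing at least $1+3=4$, so the net effect is unchanged. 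A second, smaller point: your setup posits a single common limit $\gamma$ at $Z_{i-1}$ while simultaneously allowing cylindrical pieces there, which makes $\gamma$ ambiguous; the paper sidesteps this by organizing the first case as ``all components irreducible but some intermediate limit reducible,'' which absorbs any such breaking uniformly via $r(A) \ge 1$.
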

\begin{proof}
Suppose $A^i_{i-1}$ and $A^{i-1}_{i-2}$ are the connections on $W^i_{i-1}$ and $W^{i-1}_{i-2}$ induced by $A$. 
If $A$ has irreducible components but some reducible intermediate flat connection, then each constituent instanton has non-negative index and $r(A)\geq 1$, so $i(A) \ge 1$. 
If $A^i_{i-1}$ is reducible but $A^{i-1}_{i-2}$ is not, then we have $$i(A) \ge 1 + 3 + i(A^i_{i-1}) + 3 + i(A^{i-1}_{i-2}),$$ as $A^i_{i-1}$ must be glued to some trajectory on the cylinder to have irreducible outer limits. Applying Lemma \ref{lemma:min-index} and the fact that $i(A^{i-1}_{i-2}) \ge 0$, we see that $i(A) \ge 3$. The same argument applies to the case that $A^{i-1}_{i-2}$ is reducible but $A^i_{i-1}$ is not.
Finally, if both the connections $A^i_{i-1}$ and $A^{i-1}_{i-2}$ are reducible, then Lemma \ref{lemma:min-index} implies that 
\[i(A) \ge 1+3+(-4)+3+(-4)+3+1 = 3. \qedhere\] 
\end{proof}

The other endpoint of $G^i_{i-2}$ corresponds to a decomposition $W^i_{i-2} = (-W^{i+1}_{i}) \cup_{-L_{i+1}} N$. The index $i$ determines whether $-W^{i+1}_{i}$ has middle end diffeomorphic to $S^3$ or $\mathbb{RP}^3$, with the latter corresponding to the case $i \equiv 1 \mod 3$. 

\begin{lemma}\label{lemma:g-bdry1}
Suppose $A$ is an instanton with irreducible outer limits on $(W^i_{i-2}, \hat c^i_{i-2})$ with respect to the metric broken along $M^i_{i-2}$. Suppose $A$ is not fully irreducible. If $i \not\equiv 1 \mod 3$, then $i(A) \ge 2$. If $i \equiv 1 \mod 3$, then $i(A) \ge 0$ with equality if and only if $A = (A_{W}, \hat A_N)$ for $A_{W}$ an unbroken irreducible instanton of index zero on $W=-W^{i+1}_i$ and $\hat A_N$ the unique reducible of minimal index on $(N, \hat c_N)$ (see Lemma \ref{lemma:N-reducibles}). A similar claim holds for $(W^i_{i-2}, \check c^i_{i-2})$.
\end{lemma}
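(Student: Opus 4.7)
The plan is to mirror the argument of Lemma \ref{lemma:g-bdry0}. A broken instanton $A$ on $W^i_{i-2}$ with respect to the metric broken along $M^i_{i-2}$ consists of instantons $A_W$ on $-W^{i+1}_i$ and $A_N$ on $N$ sharing a flat limit on $M^i_{i-2}$, together with any cylindrical trajectories on the outer ends $Z_i, Z_{i-2}$. Cylindrical trajectories on the middle ends $L_{i+1}, L_i, L_{i-1}$ can be ignored, since $\mathbb R \times L$ for $L \in \{S^3, \mathbb{RP}^3\}$ supports only constant trajectories. I will case-split on the reducibility of $A_W$ and $A_N$. The key topological input recorded in Section \ref{sec:cobordism-defs} is that both $L_{i+1}$ and $M^i_{i-2}$ are diffeomorphic to $\mathbb{RP}^3$ when $i \equiv 1 \mod 3$, and both to $S^3$ otherwise.

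If $A_W$ is reducible, its outer limits on $Z_i, Z_{i-2}$ are central, so two non-constant cylindrical trajectories must be attached there, each contributing index at least $1$ together with a central intermediate limit having $h^0 = 3$. Combined with $i(A_W) \geq -4$ from Lemma \ref{lemma:min-index}, $i(A_N) \geq -1$, and the $M^i_{i-2}$ limit contributing $h^0 \geq 1$, this yields $i(A) \geq 4$. If instead $A_W$ and $A_N$ are both irreducible but some intermediate limit is reducible, the reducible limit is either the shared one on $M^i_{i-2}$ (contributing $h^0 \geq 1$, so $i(A) \geq 1$ in the abelian case and $\geq 3$ in the central case) or it forces a cylindrical trajectory on an outer end, again giving $i(A) \geq 4$.

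The delicate case is when $A_W$ is irreducible and $A_N$ is reducible. Then $i(A_W) \geq 0$, and Lemma \ref{lemma:N-reducibles} gives $i(A_N) \geq -1$ with equality uniquely at $\hat A_N$ (respectively $\check A_N$). The shared limit on $M^i_{i-2}$ contributes $h^0 = 3$ when $M^i_{i-2} = S^3$ and $h^0 = 1$ when $M^i_{i-2} = \mathbb{RP}^3$, producing $i(A) \geq 2$ in the former case and $i(A) \geq 0$ in the latter. When $i \equiv 1 \mod 3$, the equality case forces $A_W$ to be an unbroken irreducible instanton of index zero and $A_N = \hat A_N$ (or $\check A_N$ when $c = \check c^i_{i-2}$), with no additional cylindrical trajectories. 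Combining all cases yields $i(A) \geq 0$ for $i \equiv 1 \mod 3$ with the stated equality case, and $i(A) \geq 2$ otherwise.

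The main obstacle will be precisely tracking which flat connections can occur as shared or intermediate limits in each topological configuration. In particular, verifying the uniqueness of the equality case combines the uniqueness of the minimal reducible $\hat A_N$ on $N$ from Lemma \ref{lemma:N-reducibles} with the observation that the abelian asymptote on the $\mathbb{RP}^3$ boundary of $N$ is unique (so $A_W$'s limit there agrees automatically) and that no additional index can be absorbed by a cylindrical trajectory on $\mathbb R \times \mathbb{RP}^3$.
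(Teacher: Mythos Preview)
Your proof is correct and follows essentially the same approach as the paper: both case-split on the reducibility of the piece $A_W$ on $-W^{i+1}_i$, invoke Lemma~\ref{lemma:min-index} for the reducible case, and then use Lemma~\ref{lemma:N-reducibles} together with the value of $h^0_\alpha$ on $M^i_{i-2}$ to obtain the sharp bound and identify the equality case. The paper's argument is slightly more streamlined in that, once $A_W$ is irreducible and cylinder pieces are stripped away, the hypothesis ``not fully irreducible'' forces $A_N$ to be reducible, so your explicit case of both $A_W$ and $A_N$ irreducible is not actually in scope; but since you obtain a stronger bound there ($i(A)\ge 1$ or $\ge 3$), this redundancy is harmless.
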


\begin{proof}
Suppose $A$ is an instanton on $(W^i_{i-2}, \hat c^i_{i-2})$ satisfying the assumption. We first assume that $A_W$, the component of $A$ on $-W^{i+1}_i$, is reducible. The discussion before Lemma \ref{lemma:min-index} implies that 
\[i(A) \ge 8 + i(A_{W}) + i(A_N) + h^0_\alpha,\] 
where $h^0_\alpha$ is the dimension of the stabilizer of the flat connection on $M^i_{i-2}$. By Lemma \ref{lemma:min-index} and Lemma \ref{lemma:N-reducibles}, we see that in this case $i(A) \ge 4$. Thus we may assume that $A_{W}$ is irreducible. We may also assume that $A$ contains no components given by instantons on a cylinder, as these increase the index by at least one. Thus, we may suppose $A = (A_{W}, A_N)$, for which the index is $i(A) = i(A_{W}) + i(A_N) + h^0_\alpha$. 

Because $A_{W}$ is irreducible, $i(A_{W}) \ge 0$. By Lemma \ref{lemma:N-reducibles}, we have $i(A_N) \ge -1$ with equality if and only if $A_N=\hat A_N$. Finally, $h^0_\alpha = 1$ if $i \equiv 1 \mod 3$ and $h^0_\alpha = 3$ otherwise. Thus if $i \equiv 1 \mod 3$ we have $i(A) \ge 0$ with equality if and only if $i(A_{W}) = 0$ and $i(A_N) = -1$, whereas if $i \not\equiv 1 \mod 3$ we have $i(A) \ge 2$. 
\end{proof}

Now we define maps 
\begin{align*}
g_2 &:= (W^2_0, G^2_0, \hat c^2_0)_* \oplus (W^2_0, G^2_0, \check c^2_0)_*: C_*(Z_{-1}) \to C_*(Z_0) \oplus C_*(Z_0)\\
g_1 &:= (W^1_{-1}, G^1_{-1},\hat c^1_{-1})_* \;-\; (W^1_{-1}, G^1_{-1}, \check c^1_{-1})_*: C_*(Z_1) \to C_*(Z_{-1}) \\
g_0 &:= (W^0_{-2}, G^0_{-2}, \hat c^0_{-2})_* \oplus (W^0_{-2}, G^0_{-2}, \check c^0_{-2})_*: C_*(Z_0) \oplus C_*(Z_0) \to C_*(Z_1)
\end{align*}
by counting instantons with irreducible outer limits and index $-1$ in the respective family of metrics and perturbations, with the approprsiate bundle. By a combination of Lemmas \ref{lemma:no-irred-only}, \ref{lemma:min-index} and Lemmas \ref{lemma:g-bdry0}, \ref{lemma:g-bdry1} above, we see that this moduli space is compact, so the maps $g_i$ are well defined. The boundary relations are slightly more subtle. 

\begin{proposition}\label{prop:g}
The maps $g_i$ defined above satisfy the relations $$d_{i-2} g_i + f_{i-1}  f_i + g_i  d_i = 0.$$
\end{proposition}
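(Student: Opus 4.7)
The proof follows the standard template: derive the identity from the algebraic boundary count of the compactified $1$-dimensional moduli space $M^+(W^i_{i-2}, c, G^i_{i-2}; \alpha, \beta)_1$ for $c\in\{\hat c^i_{i-2},\check c^i_{i-2}\}$ and irreducible flat limits $\alpha,\beta$. Lemma~\ref{lemma:no-irred-only}, Lemma~\ref{lemma:min-index}, Lemma~\ref{lemma:g-bdry0}, and Lemma~\ref{lemma:g-bdry1} together show that every broken configuration of index at most $1$ with irreducible outer limits is fully irreducible, with a single exceptional family discussed below. After choosing a generic perturbation on the interior metrics, standard transversality and gluing package this compactification as a $1$-manifold with boundary.

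The boundary comes in four flavors: (a) a cylinder trajectory bubbling at the $Z_i$ end, contributing $g_i d_i$; (b) a cylinder trajectory bubbling at the $Z_{i-2}$ end, contributing $d_{i-2}g_i$; (c) broken metrics at $t=1$ stretched along $Z_{i-1}$; and (d) broken metrics at $t=-1$ stretched along $M^i_{i-2}$. For (c), the usual fiber-product identification reads off the count as index-zero composites of cobordism maps on the two halves $W^i_{i-1}$ and $W^{i-1}_{i-2}$, with bundle data $c\cap W^i_{i-1}$ and $c\cap W^{i-1}_{i-2}$. Matching these restrictions for each $c\in\{\hat c^i_{i-2},\check c^i_{i-2}\}$ against the direct-sum and sign conventions in the definitions of $f_i$ and $g_i$, the total (c)-contribution is precisely $f_{i-1}f_i$.

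The heart of the argument is stratum (d). When $i\not\equiv 1\pmod 3$, the middle end $M^i_{i-2}$ is $S^3$, which supports only the central flat connection, and Lemma~\ref{lemma:g-bdry1} forces every configuration at $t=-1$ with irreducible outer limits to have index at least $2$; thus stratum (d) is empty in index $0$ and the relation follows. In the remaining case $i=1$, Lemma~\ref{lemma:g-bdry1} describes the relevant index-$0$ broken configurations at $t=-1$ as pairs $(A_W, A_N)$, where $A_W$ is an index-$0$ irreducible instanton on $-W^2_1$ with abelian limit on $\mathbb{RP}^3$ and $A_N\in\{\hat A_N,\check A_N\}$ is the unique minimal-index reducible on $N$ provided by Lemma~\ref{lemma:N-reducibles}. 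The key observations, which justify the definition of $g_1$ as the difference $(\hat c)_* - (\check c)_*$, are: (i) by mod-$3$ periodicity, $\hat c^2_1 = \check c^2_1 = c_-$, so the moduli space producing $A_W$ is identical for the two bundle choices; and (ii) both $\hat A_N$ and $\check A_N$ restrict to the unique non-trivial abelian flat connection on $(\mathbb{RP}^3, w)$, so the obstructed gluing at the $\mathbb{RP}^3$ middle end --- including the $S^1$-family of glued connections coming from the $1$-dimensional stabilizer of the abelian limit --- is canonically identified in the two cases. Consequently the $\hat c$-count and $\check c$-count of stratum (d) coincide and cancel in $g_1$.

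The main obstacle is verifying observation (ii): one must check that the obstructed gluing of $A_W$ with $\hat A_N$ produces the same algebraic $\mathbb{F}_2$-count as its gluing with $\check A_N$. This reduces, via the matching of moduli spaces of $A_W$ and the explicit form of the minimal-index reducibles in Lemma~\ref{lemma:N-reducibles}, to showing that the two $S^1$-families of glued connections are canonically isomorphic. This step closely parallels the corresponding analysis in the proof of \cite[Theorem~1.6]{CDX}, where the analogous exact triangle is established for admissible bundles.
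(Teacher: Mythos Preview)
Your proof is correct and follows essentially the same approach as the paper: both analyze the boundary of the compactified $1$-dimensional moduli space, handle $i\not\equiv 1\pmod 3$ via Lemma~\ref{lemma:g-bdry1}, and for $i=1$ argue that the $\mathbb{RP}^3$-broken contributions for $\hat c$ and $\check c$ cancel in the difference defining $g_1$, the key input being $\hat c^{-1}_{-2}=\check c^{-1}_{-2}$ (your $\hat c^2_1=\check c^2_1$).

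One small point of comparison: your final paragraph overcomplicates the cancellation. The paper does not attempt to compare the two obstructed gluings directly; it simply notes that because $\hat A_N$ (resp.\ $\check A_N$) is the \emph{unique} minimal-index reducible on $(N,\hat c_N)$ (resp.\ $(N,\check c_N)$), the $\mathbb{RP}^3$-broken map for $\hat c$ coincides on the chain level with $(-W^{-1}_{-2},-\hat c^{-1}_{-2})_*$, and likewise for $\check c$, and these are equal because $\hat c^{-1}_{-2}=\check c^{-1}_{-2}$. Your assertion that the two $S^1$-families of glued connections are ``canonically isomorphic'' is imprecise---$\hat A_N$ and $\check A_N$ live on genuinely different bundles over $N$---and the citation to \cite{CDX} for the obstructed-gluing step is slightly off, since that paper works with admissible bundles carrying no reducibles; the relevant gluing analysis here is rather the one familiar from Floer's original exact triangle (cf.\ \cite{BD:sur-tr,Scaduto-KH}).
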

\begin{proof}
First we focus on $g_2$. The same argument will apply to $g_0$ with essentially no change. The desired relation decomposes as a direct sum of two relations involving the two components of $g_2$ and the two components of $f_1f_2$. More precisely, if $A$ is a broken instanton for a metric in $G^i_{i-2}$ with $i(A) \le 0$, then $A$ is fully irreducible by the discussion of Subsection \ref{sec:reducibles}, Lemmas \ref{lemma:g-bdry0} and \ref{lemma:g-bdry1}. Thus one finds by the usual gluing techniques that $$d_0 (W^2_0, G^2_0, \hat c^2_0)_* + (W^2_0, G^2_0, \hat c^2_0)_*d_2 = (W^2_0, \partial G^2_0, \hat c^2_0)_*,$$ and similarly for $\check c^2_0$. The map induced by the incoming boundary point of $\partial G^2_0$ is the composite map $(W^1_0, G^1_0,\hat c^1_0)_* \circ (W^2_1, G^2_1,\hat c^2_1)_*$ of the first component of $f_1 f_2$. By Lemma \ref{lemma:g-bdry1}, the other boundary component induces the zero map.  Running the same argument for $\check c^2_0$ and taking a direct sum, we obtain the relation $d_0 g_2 + f_1 f_2 + g_2 d_2 = 0$. 

As for $g_1$, the strategy follows \cite[Proposition 5.14(i)]{CDX}. Arguing in the same way as in the previous case, we obtain the following two relations:
\begin{align}
d_{-1}(W^1_{-1}, G^1_{-1},\hat c^1_{-1})_*+(W^1_{-1}, G^1_{-1},\hat c^1_{-1})_*d_1&=(W^1_{-1}, \partial G^1_{-1},\hat c^1_{-1})_*\label{g1-rel-1}\\
d_{-1}(W^1_{-1}, G^1_{-1},\check c^1_{-1})_*+(W^1_{-1}, G^1_{-1},\check c^1_{-1})_*d_1&=(W^1_{-1}, \partial G^1_{-1},\check c^1_{-1})_*\label{g1-rel-2}
\end{align}
The contribution induced by the incoming boundary point of $\partial G^1_{-1}$ in the maps $(W^1_{-1}, \partial G^1_{-1},\hat c^1_{-1})_*$ and $(W^1_{-1}, \partial G^1_{-1},\check c^1_{-1})_*$ are respectively equal to 
\[
  (W^0_{-1}, G^0_{-1}, \hat c^0_{-1})_* (W^1_0, G^1_0, \hat c^1_0)_*\hspace{1cm} (W^0_{-1}, G^0_{-1}, \check c^0_{-1})_* (W^1_0, G^1_0, \check c^1_0)_*
\]
Unlike the case of $g_2$, the contributions induced by the other boundary point of $\partial G^1_{-1}$, corresponding to the metric broken along $\mathbb{RP}^3$, are not automatically zero. Rather, these maps are equal to each other as we explain momentarily. In particular, after taking the difference of \eqref{g1-rel-1} and \eqref{g1-rel-2}, we obtain  
\[
  d_{-1}g_1+g_1d_{1}= (W^0_{-1}, G^0_{-1}, \hat c^0_{-1})_* (W^1_0, G^1_0, \hat c^1_0)_* - (W^0_{-1}, G^0_{-1}, \check c^0_{-1})_* (W^1_0, G^1_0, \check c^1_0)_*.
\]
The right hand side is equal to $f_0 f_1$, and this proves the claim.

To see that the maps induced by the bundles $\hat c^1_{-1}$ and $\check c^1_{-1}$ corresponding to the metrics broken along $\mathbb{RP}^3$ are the same, observe that $$\hat c^1_{-1} \cap (-W^{-1}_{-2}) = - \hat c^{-1}_{-2} = -\check c^{-1}_{-2} = \check c^1_{-1} \cap (-W^{-1}_{-2}),$$ while the intersections of $\hat c^1_{-1}$ and $\check c^1_{-1}$ with $N$ agree with $\hat c_N$ and $\check c_N$. Now by Lemma \ref{lemma:g-bdry1}, with respect to the metric broken along $\mathbb{RP}^3$, the only broken instantons of index zero on $(W^1_{-1}, \hat c^1_{-1})$ have $A_{W}$ an irreducible unbroken instanton of index zero, and $\hat A_N$ the unique minimal index reducible.  Thus, for $\hat c$ this map coincides on the chain level with the map $(-W^{-1}_{-2}, -\hat c^{-1}_{-2})_*$, and a similar claim holds for $\check c$. As discussed above $\hat c^{-1}_{-2} = \check c^{-1}_{-2}$, so indeed the two maps are the same. 
\end{proof}

\subsubsection{The maps $h_i$}\label{hi-const}

We move on to the maps $h_i$ and their boundary relations. These are defined by counting index $-2$ irreducible instantons on the cobordisms $W^i_{i-3}$ with respect to the family of metrics $G^i_{i-3}$. For this family,  
\begin{itemize}
\item we need to show there are no unbroken instantons of index less than $-2$ supported by metrics in the interior of $G^i_{i-3}$; 
\item we need to show there are no unbroken instantons of index less than $-1$ supported by $\partial G^i_{i-3}$;
\item we need to determine the induced map of $\partial G^i_{i-3}$.
\end{itemize}

Because there are three families $G^i_{i-3}$ and each family contains five boundary strata, this involves a great deal of case analysis. The following four lemmas in our case analysis follow exactly as in the previous section, and we omit their proofs.

The first lemma asserts that there are no small-index reducibles on the interior of the family. This lemma follows from Lemma \ref{lemma:no-irred-only}.

\begin{lemma}\label{lemma:h1}
Suppose $A$ is an instanton with irreducible outer limits on $(W^i_{i-3}, \hat c^i_{i-3})$ or $(W^i_{i-3}, \check c^i_{i-3})$ with respect to one of the metrics in $\textup{int}(G^i_{i-3})$. Then $i(A) \ge -2$ with equality if and only if $A$ is unbroken and hence fully irreducible.
\end{lemma}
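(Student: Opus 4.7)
The plan is to split by how $A$ could fail to be fully irreducible and then combine Lemmas \ref{lemma:no-irred-only} and \ref{lemma:min-index} with the transversality input for fully irreducible solutions. Since $\dim G^i_{i-3} = 2$, the perturbation choice described just before Lemma \ref{lemma:no-irred-only} already gives that every fully irreducible (possibly broken) instanton on $(W^i_{i-3}, c, G^i_{i-3})$ satisfies $i(A)\ge -2$, with equality if and only if $A$ is unbroken and supported by an unbroken metric in $\mathrm{int}(G^i_{i-3})$. This handles the fully irreducible case and identifies the equality locus.

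To rule out non-fully-irreducible $A$ with $i(A)\le -2$, I would split into two subcases. First, if every constituent instanton of $A$ is irreducible but at least one intermediate limit is reducible, then Lemma \ref{lemma:no-irred-only} applied with $j = i-3$ gives $i(A)\ge 2-(i-j-1) = 0 > -2$. Second, suppose some constituent of $A$ is reducible. As noted in Section \ref{sec:reducibles}, reducible flat connections on a cylinder are constant and therefore excluded by the nonconstant condition in Definition \ref{def:broken-reducible}, so only $A_W$ can be reducible. Its positive and negative limits are then reducible, hence central since the relevant $Z_k$ are integer homology spheres, which forces $A$ to contain at least one cylindrical trajectory on each of the incoming and outgoing ends to link these central intermediate limits with the irreducible outer limits of $A$.

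The index tally on this configuration is $i(A) \ge 1 + 3 + i(A_W) + 3 + 1$, where each $3$ records the stabilizer contribution $h^0_\theta$ at a central intermediate limit. Combining with Lemma \ref{lemma:min-index}'s bound $i(A_W)\ge -4$ gives $i(A)\ge 4>-2$, completing the case analysis. The only steps requiring any care are confirming that the two cylindrical trajectories are genuinely forced (otherwise the outer limits of $A$ would themselves be central, contradicting irreducibility of the outer limits) and that the perturbation chosen earlier in the section preserves fully irreducible transversality over the two-dimensional associahedron $G^i_{i-3}$; both are built into the setup, so beyond this the argument is just the obvious extension of Lemmas \ref{lemma:no-irred-only} and \ref{lemma:min-index} to the case $i - j = 3$.
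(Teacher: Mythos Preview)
Your proof is correct and follows the same approach as the paper. The paper omits the proof entirely, writing only that the lemma ``follows from Lemma \ref{lemma:no-irred-only}''; your argument spells out precisely the case analysis implicit in the discussion surrounding Lemmas \ref{lemma:no-irred-only} and \ref{lemma:min-index} (fully irreducible via transversality, irreducible constituents with a reducible intermediate limit via Lemma \ref{lemma:no-irred-only}, and reducible $A_W$ via the inequality $i(A)\ge 8+i(A_W)$ combined with Lemma \ref{lemma:min-index}).
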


The second lemma asserts that there are no small-index reducibles on the boundary faces broken along $S^3$ (and, in fact, no small-index instantons whatsoever), and is proved as in the $i \not\equiv 1 \mod 3$ case of Lemma \ref{lemma:g-bdry1}.

\begin{lemma}\label{lemma:h2}
Suppose $A$ is an instanton with irreducible outer limits on $(W^i_{i-3}, \hat c^i_{i-3})$ or $(W^i_{i-3}, \check c^i_{i-3})$ with respect to one of the metrics in $G^i_{i-3}$ broken along $S^3$. Then $i(A) \ge 1$.
\end{lemma}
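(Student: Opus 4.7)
The plan is to mirror the argument of the $i \not\equiv 1 \bmod 3$ case of Lemma \ref{lemma:g-bdry1}, adjusting only for the fact that the boundary face of the pentagon $G^i_{i-3}$ corresponding to an $S^3$-breaking is at most one-dimensional, rather than a single point. First I would fix a metric broken along some $Q_j \cong S^3$, yielding a decomposition $W^i_{i-3} = V_1 \cup_{S^3} V_2$, and write $A = (A_1, A_2)$ together with any cylinder trajectories on the outer ends. The decisive observation is that the only flat connection on $S^3$ is the trivial connection $\theta$, which is central with $h^0_\theta = 3$, so the intermediate limit on $S^3$ already forces $r(A) \ge 3$. This is the single piece of new topological input; after it, everything is bookkeeping with the index formula.

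From there the argument is a case analysis paralleling the $i \not\equiv 1 \bmod 3$ case of Lemma \ref{lemma:g-bdry1}. If both $A_1$ and $A_2$ are irreducible, then I would discard any cylinders (each contributes at least $+1$ to $i(A)$) and use that the combined family dimension over which $A_1, A_2$ vary on the boundary face is at most $1$ to conclude $i(A_1) + i(A_2) \ge -1$, hence $i(A) \ge 3 - 1 = 2$. If instead one piece, say $A_1$, is reducible, then its limit on the outer boundary of $V_1$ (either $Z_i$ or $Z_{i-3}$, an integer homology sphere) must be central, equal to $\theta$; since $A$ has irreducible outer limits, a cylinder trajectory joining $\theta$ to an irreducible connection is forced, contributing $+1$ to $i(A)$ and $+3$ to $r(A)$. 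Combined with Lemma \ref{lemma:min-index} (giving $i(A_1) \ge -4$) and $i(A_2) \ge -1$, this still yields $i(A) \ge 2$. The remaining subcase, with both pieces reducible, produces a strictly larger bound by applying the same reasoning symmetrically at the other outer end.

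I do not expect any significant obstacle. The only bookkeeping subtlety is identifying which piece of the decomposition carries which outer and middle boundaries of $W^i_{i-3}$ and which one contains $N$; but since the desired bound is weak (we only need $\ge 1$, while the analysis actually delivers $\ge 2$), coarse accounting suffices, and the argument from Lemma \ref{lemma:g-bdry1} transfers essentially verbatim with the numerical bounds shifted by one to reflect the extra dimension of family.
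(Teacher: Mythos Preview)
Your overall strategy matches the paper's: exploit that the $S^3$ intermediate limit contributes $h^0_\theta = 3$, then bound the indices of the two pieces. However, your explicit case analysis rests on a mistaken picture of the decomposition. Breaking $W^i_{i-3}$ along the sphere $M^k_{k-2} \cong S^3$ does \emph{not} produce two pieces each carrying one of the outer ends $Z_i, Z_{i-3}$. Rather, one piece is $N$ (with boundary $S^3 \sqcup S^3 \sqcup \mathbb{RP}^3$, consisting entirely of middle ends) and the other piece $V$ carries \emph{both} outer ends $Z_i$ and $Z_{i-3}$. Along the relevant edge of the pentagon, the metric on $N$ is fixed while $V$ carries the $1$-parameter family.

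This matters for your case (b). If the reducible piece is $A_N$, then since $N$ has no outer boundary, \emph{no} cylinder trajectory is forced: the broken instanton $A = (A_V, A_N)$ with $A_V$ irreducible and $A_N$ reducible is perfectly admissible without cylinders. This is precisely the sharp case. Here $i(A_V) \ge -1$ (transversality in the $1$-parameter family), $i(A_N) \ge -1$ (Lemma~\ref{lemma:N-reducibles}), and the $S^3$ gluing contributes $+3$, yielding
\[
i(A) \ge -1 + (-1) + 3 = 1,
\]
not $\ge 2$ as you claim. Your case (a) misses this (it assumes both pieces irreducible), and your case (b) mishandles it (it assumes the reducible piece has an outer end). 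Relatedly, Lemma~\ref{lemma:min-index} is stated for the cobordisms $W^i_j$ themselves, not for the pieces $V$ or $N$; the correct input for $N$ is Lemma~\ref{lemma:N-reducibles}, and for reducible $A_V$ one argues directly from the index formula together with the forced pair of cylinders on the outer ends (this case is far from sharp).

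Your closing remark that the argument of Lemma~\ref{lemma:g-bdry1} ``transfers essentially verbatim with the numerical bounds shifted by one'' is exactly right and, carried out carefully with the correct identification of the pieces, gives the proof.
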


The third lemma asserts that there are no small-index reducibles on the boundary faces broken along the various $Z_j$, and that counting instantons on these boundary faces gives precisely the composite of the relevant instanton-counting maps, and is proved as in Lemma \ref{lemma:g-bdry0}.

\begin{lemma}\label{lemma:h3}
Suppose $A$ is an instanton with irreducible outer limits on $(W^i_{i-3}, \hat c^i_{i-3})$ or $(W^i_{i-3}, \check c^i_{i-3})$ with respect to one of the metrics in $G^i_{i-3}$ broken along $Z_{i-1}$ or $Z_{i-2}.$ Then $i(A) \ge -1$ with equality if and only if $A$ is an unbroken instanton with exactly two irreducible pieces.
\end{lemma}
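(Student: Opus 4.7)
The plan is to perform the case analysis of Lemma \ref{lemma:g-bdry0} one level up. Let $Z \in \{Z_{i-1}, Z_{i-2}\}$ be the breaking submanifold; the corresponding broken metric decomposes $W^i_{i-3} = W^i_j \cup_Z W^j_{i-3}$ for the appropriate $j$. In either choice of $Z$, one factor is a $1$-step cobordism carrying the fixed metric from the $0$-dimensional family $G^{\bullet}_{\bullet - 1}$, whose irreducibles have index $\ge 0$ by transversality, while the other is a $2$-step cobordism carrying a metric from the $1$-parameter family $G^{\bullet}_{\bullet - 2}$ (which is unbroken in the interior of the pentagon edge), whose irreducibles have index $\ge -1$. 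A broken instanton $A$ on $(W^i_{i-3}, c)$ with respect to this broken metric and with irreducible outer limits therefore decomposes as a pair $(A^i_j, A^j_{i-3})$ together with possibly some additional cylindrical instanton components on $\mathbb R \times Z_i$, $\mathbb R \times Z_{i-3}$, the middle ends, or on $\mathbb R \times Z$.

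First I would treat the expected equality case, in which both $A^i_j$ and $A^j_{i-3}$ are irreducible, the shared flat limit $\alpha$ on $Z$ is irreducible, and $A$ contains no additional cylindrical components. Then the index is additive with no $r(A)$ correction, giving $i(A) = i(A^i_j) + i(A^j_{i-3}) \ge 0 + (-1) = -1$, with equality attained precisely when one factor is an index-zero irreducible for the unbroken metric and the other is an index $-1$ irreducible in the $1$-parameter family. This is exactly the configuration asserted in the lemma.

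For every other configuration I would show $i(A) \ge 2$, following the same bookkeeping as in Lemma \ref{lemma:g-bdry0}: each additional irreducible cylindrical component contributes at least $+1$; a central intermediate limit contributes $r(A) \ge 3$; and a reducible composite factor has index $\ge -4$ by Lemma \ref{lemma:min-index}, but since its outer limits must then be central while the outer limits of $A$ are irreducible, any such configuration is forced to contain at least one additional irreducible cylindrical trajectory of index $\ge 1$ on the corresponding outer end of $A$, together with an $r$-contribution $\ge 3$ from the matching central intermediate limit. Running the arithmetic across the remaining subcases — a single reducible factor with irreducible counterpart, both factors reducible, both factors irreducible with $\alpha$ central, or both factors irreducible but with extra cylindrical pieces — produces the strict inequality $i(A) \ge 2$, comfortably above $-1$.

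The hard part will be keeping this case analysis organized and not double-counting $r(A)$ contributions at intermediate limits that are shared between several cylindrical and non-cylindrical components; once the decomposition of $A$ is fixed and Lemma \ref{lemma:min-index} is applied on each reducible factor, the numerical verification in each remaining case is routine.
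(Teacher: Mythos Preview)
Your approach is exactly what the paper has in mind: it omits the proof entirely and simply says ``proved as in Lemma~\ref{lemma:g-bdry0}'', and your case analysis is precisely the one-level-up version of that argument, correctly using that on the edge broken along $Z_{i-1}$ or $Z_{i-2}$ one factor carries a fixed metric (irreducibles of index $\ge 0$) and the other carries a metric from the $1$-parameter family $G^\bullet_{\bullet-2}$ (irreducibles of index $\ge -1$).

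One small numerical overreach: your blanket claim that every non-equality configuration satisfies $i(A)\ge 2$ is too strong in the sub-case ``both composite factors irreducible, intermediate limit irreducible, but with one extra cylindrical trajectory''. There one only gets $i(A)\ge 0+(-1)+1=0$. This does not harm the lemma, since all you need for the equality characterization is $i(A)>-1$; but if you keep the bound $\ge 2$ in the writeup you should exclude this sub-case from it and handle it separately with the weaker (and sufficient) bound $i(A)\ge 0$.
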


Recall here that if $A$ is an instanton with respect to a broken metric $g$, we refer to $A$ as unbroken if it is composed of as many components as $g$ is, and we refer to $A$ as broken if it has strictly more components than $g$.

Stating the fourth lemma requires some preparation. For $i = 1, -1$, the family $G^i_{i-3}$ contains an interval of metrics $I^i_{\mathbb{RP}^3}$ broken along $\mathbb{RP}^3$; it restricts to a single metric on $N$, but an interval of metrics $I^i_{W}$ on $W^i_{i-3} \setminus N = \bar W^i_{i-3}$. The following lemma asserts that the chain-level map induced by $(W^i_{i-3}, I^i_{\mathbb{RP}^3})$ is equal to the chain-level map induced by $(\bar W^i_{i-3}, I^i_W)$. The proof is as in the $i \equiv 1 \mod 3$ case of Lemma \ref{lemma:g-bdry1}.

\begin{lemma}\label{lemma:h4}
Suppose $i \not\equiv 0 \mod 3$, and that $A$ is an instanton with irreducible outer limits on $(W^i_{i-3}, \hat c^i_{i-3})$ or $(W^i_{i-3}, \check c^i_{i-3})$ with respect to one of the metrics in $G^i_{i-3}$ broken along $\mathbb{RP}^3$. Then $i(A) \ge -1$, with equality if and only if $A$ is an unbroken instanton with exactly two pieces, the piece on $\bar W^i_{i-3}$ being an unbroken irreducible of index $-1$ and the piece on $N$ being the unique index $-1$ abelian connection. 
\end{lemma}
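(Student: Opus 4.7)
The plan is to adapt the argument from the $i \equiv 1 \bmod 3$ case of Lemma \ref{lemma:g-bdry1}; the only structural change is that $\bar W^i_{i-3}$ carries the $1$-parameter family of metrics $I^i_W$ rather than a single metric, which weakens the transversality bound on $i(A_{\bar W})$ from $\geq 0$ to $\geq -1$. Given a broken metric $g \in I^i_{\mathbb{RP}^3}$ and an instanton $A$ on $(W^i_{i-3}, c, g)$ with $c \in \{\hat c^i_{i-3}, \check c^i_{i-3}\}$ and irreducible outer limits, I would decompose $A$ into the component $A_{\bar W}$ on $\bar W^i_{i-3}$, the component $A_N$ on $N$, and any cylinder trajectories on the various ends.

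First I would dispose of the case in which $A_{\bar W}$ is reducible. Since $Z_i, Z_{i-3}$ are homology spheres, $A_{\bar W}$ restricts trivially on the outer ends, so cylinder trajectories on both outer ends are forced in order to reach irreducible outer limits; these contribute at least $(1+3) + (1+3) = 8$ to $i(A)$ by the same bookkeeping as in Case 1 of Lemma \ref{lemma:g-bdry1}. A routine application of the index formula \eqref{eqn:index} to $\bar W^i_{i-3}$ yields a crude lower bound $i(A_{\bar W}) \geq -C$ for a constant $C$ depending only on $b^+(\bar W^i_{i-3})$ and its boundary components. Combined with $i(A_N) \geq -1$ from Lemma \ref{lemma:N-reducibles} and $h^0_\alpha = 1$ for the abelian flat connection on $\mathbb{RP}^3$, one obtains $i(A) \geq 8 - C - 1 + 1 \gg -1$, which rules out this case for minimal-index configurations.

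Next I would treat the case in which $A_{\bar W}$ is irreducible. Exactly as in Lemma \ref{lemma:g-bdry1}, any cylinder component raises the index by at least one, so we may assume $A = (A_{\bar W}, A_N)$ with
\[
i(A) = i(A_{\bar W}) + i(A_N) + h^0_\alpha.
\]
Because $\bar W^i_{i-3}$ is equipped with the $1$-parameter family $I^i_W$, transversality for the moduli space of irreducibles forces $i(A_{\bar W}) \geq -1$. If $A_N$ is irreducible then $i(A_N) \geq 0$ and hence $i(A) \geq 0$; if $A_N$ is reducible then Lemma \ref{lemma:N-reducibles} gives $i(A_N) \geq -1$ with equality iff $A_N$ is $\hat A_N$ (resp. $\check A_N$) for $c = \hat c^i_{i-3}$ (resp. $\check c^i_{i-3}$). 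Together with $h^0_\alpha = 1$, this yields $i(A) \geq -1$, with equality precisely when $A$ is unbroken, $A_{\bar W}$ is an index-$(-1)$ irreducible on $\bar W^i_{i-3}$, and $A_N$ is the unique minimal-index abelian reducible on $(N, c_N)$.

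The main hurdle is the crude index bound for reducibles on $\bar W^i_{i-3}$ in the first case, which requires identifying $b^+$ and cataloging the boundary components of $\bar W^i_{i-3}$ (both the $S^3$ and $\mathbb{RP}^3$ pieces created by excising $N$, as well as the two original $S^3$ middle ends not inside $N$ and the outer ends $Z_i, Z_{i-3}$). However, the factor of $8$ arising from the forced cylinder trajectories provides so much slack that essentially any finite bound coming from \eqref{eqn:index} suffices.
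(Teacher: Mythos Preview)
Your proposal is correct and follows the same approach the paper indicates (the paper simply defers to the $i \equiv 1 \pmod 3$ case of Lemma~\ref{lemma:g-bdry1}). Two small points deserve tightening.

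First, your description of the ends of $\bar W^i_{i-3}$ is off. The two $S^3$ middle ends of $W^i_{i-3}$ lie \emph{inside} $N$, not outside it; breaking along $M^{i'}_{i'-2} \cong \mathbb{RP}^3$ leaves $\bar W^i_{i-3}$ with exactly four ends: the outer ends $Z_i, Z_{i-3}$, the original $\mathbb{RP}^3$ middle end coming from the copy of $X$ sitting in $W^i_{i-3}$, and the new $\mathbb{RP}^3$ created by the cut. Since $S^3$ ends contribute zero to the sum in \eqref{eqn:index}, this confusion does not actually damage your bound, but you should state the topology correctly.

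Second, the claim that ``essentially any finite bound suffices'' is too loose: you need $8 - C + (-1) + 1 \ge 0$, i.e.\ $C \le 8$. With the correct ends (two $\mathbb{RP}^3$'s, each contributing $1$ to the sum) and $b^+(\bar W^i_{i-3}) = 1$ (which follows from $b^+(W^i_{i-3}) = 1$, $b^+(N) = 0$, and additivity under gluing along rational homology spheres), formula \eqref{eqn:index} gives $i(A_{\bar W}) = 8\mathcal{E}(A_{\bar W}) - 6 + 2 \ge -4$, well within range. You should record this explicitly rather than wave at it.
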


For the final face, consisting of metrics broken along $S^2 \times S^1$, the argument is somewhat different. We follow the argument of \cite[Section 6.3]{CDX}. 

Recall that $M^{i}_{i-3}\cong S^2 \times S^1$ divides $W^i_{i-3}$ into two pieces, one diffeomorphic to $V_i$ given by the complement of a regular neighborhood of $\{0\} \times K_i$ in $[-1,1] \times Z_i$  and the other diffeomorphic to the complement $N'$ of a neighborhood of an $S^1$ fiber in $N$. The corresponding one-parameter family of metrics is constant on $V_i$, and restricts to a family of metrics on $N'$ denoted by $I^i_{S^2 \times S^1}$. 

The character variety of flat $SU(2)$-connections on $S^2 \times S^1$ may be identified as $$\mathfrak X(S^2 \times S^1) \cong \text{Hom}\left(\pi_1(S^2 \times S^1), SU(2)\right)/\text{conj} \cong SU(2)/\text{conj} \cong [-1,1],$$
with the overall map given by sending $[A]$ to $\text{tr}(\text{Hol}_A(\{\ast\} \times S^1))/2$. The endpoint $1$ corresponds to the trivial connection, the endpoint $-1$ corresponds to the non-trivial central connection whose holonomy along $\{\ast\} \times S^1$ is $-I$, while the interior points of the interval correspond to abelian connections. 

Given a 4-manifold $W$ whose boundary is not necessarily a union of rational homology spheres, we define the index of an ASD connection over $W$ to be the index of the operator defined using weighted Sobolev spaces with a positive weight $\delta > 0$. For us, $W$ will have $S^2 \times S^1$ as its only component which is not a rational homology sphere. Restriction gives a map $M(W) \to \mathfrak X(S^2 \times S^1)$, and the index of an ASD connection computes the expected dimension of the fiber of this map.

The following statement can be proved similarly to those above, and is discussed for general $SU(N)$ following \cite[Remark 6.33]{CDX}.

\begin{lemma}\label{lemma:h5}
Suppose that $A$ is an instanton with irreducible outer limits on $(W^i_{i-3}, \hat c^i_{i-3})$ or $(W^i_{i-3}, \check c^i_{i-3})$ with respect to one of the metrics in $G^i_{i-3}$ broken along $S^2 \times S^1$. Then $i(A) \ge -1$, with equality if and only if $A$ is an unbroken instanton with exactly two pieces, the piece on $V_i$ being an unbroken irreducible of index $-1$ and the piece on $N'$ being a reducible instanton of index $-2$. 
\end{lemma}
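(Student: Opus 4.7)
My plan is to follow the same outline as Lemmas \ref{lemma:g-bdry0}, \ref{lemma:h2}, \ref{lemma:h3}, and \ref{lemma:h4}, decomposing a broken instanton $A$ over the $S^2\times S^1$ face into a piece $A_V$ on $V_i$, a piece $A_{N'}$ on $N'$ glued along a common flat limit $\rho \in \mathfrak{X}(S^2\times S^1)\cong [-1,1]$, and possibly cylinder instantons on the outer ends. The genuinely new ingredient is that $S^2\times S^1$ supports a positive-dimensional family of flat connections, so indices and expected dimensions must be computed using the weighted Sobolev spaces mentioned in the paragraph preceding the lemma, and one must allow reducible $A_{N'}$ whose restriction to $S^2\times S^1$ is a non-central abelian connection.

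I would first rule out the case that $A_V$ itself is reducible. Because $V_i$ is the product cobordism $[-1,1]\times Z_i$ with a tubular neighborhood of a knot removed, any reducible instanton on $V_i$ has central asymptotic value at each of the two $Z_i$ ends; combined with the assumption that the outer limits of $A$ on $Z_i$ are irreducible, this forces at least one additional cylinder instanton (contributing index $\ge 1$) and a central intermediate limit (contributing $h^0 = 3$). A brief application of the index formula \eqref{eqn:index} on $V_i$ then shows these contributions push $i(A)$ well above $-1$.

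When $A_V$ is irreducible, I would compare the two possibilities for $A_{N'}$. If $A_{N'}$ is also irreducible, then a count analogous to Lemma \ref{lemma:h3}, using that the family of metrics on $N'$ is one-parameter, gives that each piece has index at least $-1$; after the $S^2\times S^1$ gluing correction the total index satisfies $i(A)\ge 0$, ruling out equality. If $A_{N'}$ is reducible, I would classify reducibles on $(N', c|_{N'})$ along the same Hodge-theoretic lines as Lemma \ref{lemma:N-reducibles}: reducibles correspond to splittings $\{x,y\}$ of $\mathrm{PD}(c|_{N'})$ in $H^2(N';\mathbb{Z})$, and one identifies the minimum-energy reducible and verifies that its restriction to the $S^2\times S^1$ end is non-central abelian (so $h^0_\rho=1$). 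The minimum-index reducible on $N'$ should have index $-2$, and assembling with an irreducible $A_V$ of index $-1$ together with the correction term from $\rho$ produces the equality case described in the lemma.

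The hardest step will be organizing the index bookkeeping in the presence of the $S^2\times S^1$ end: specifically, (i) computing $H^2(N';\mathbb{Z})$ together with the restriction of $\mathrm{PD}(c|_{N'})$ to the $S^2\times S^1$ boundary in order to pin down the minimum-index reducible and verify its middle limit is abelian rather than central, and (ii) applying the correct weighted-Sobolev form of \eqref{eqn:index} on each piece. Both ingredients are adaptations of machinery already present in \cite{CDX}, in particular the treatment following \cite[Remark 6.33]{CDX}; once the indices and bundle restrictions are identified, the remaining argument reduces to a direct case analysis parallel to the preceding lemmas.
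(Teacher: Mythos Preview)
Your outline matches the paper's approach: the paper does not give a proof of this lemma at all, stating only that it ``can be proved similarly to those above, and is discussed for general $SU(N)$ following \cite[Remark 6.33]{CDX}.'' Your decomposition into $A_V$ and $A_{N'}$, the elimination of reducible $A_V$, the Hodge-theoretic classification of reducibles on $N'$, and the explicit acknowledgment that the weighted-Sobolev index bookkeeping along the $S^2\times S^1$ end is the crux (and is handled in \cite{CDX}) are exactly the ingredients the paper gestures at.
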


That is, we may identify the moduli space of index $-1$ instantons $M(W^i_{i-3}, \hat c^i_{i-3}, I^i_{S^2 \times S^1}; \alpha, \alpha')_0$ with the fiber product $$M(V_i, \hat c^i_{i-3}; \alpha, \alpha')_0 \times_{\mathfrak X(S^2 \times S^1)} M(N', \hat c^i_{i-3}, I^i_{S^2 \times S^1})^{\textup{red}}_1,$$ and similarly with $\check c^i_{i-3}$. (The subscripts indicate the dimensions of the relevant spaces, not the indices of their constituent instantons.) It follows that there is a well-defined map defined by counting instantons supported by the family of metrics $(W^i_{i-3}, \hat c^i_{i-3}, I^i_{S^2 \times S^1})$ which have index equal to $-1$. A similar claim holds for $\check c^i_{i-3}$.

We thus have an understanding of the instantons on $\partial G^i_{i-3}$ of index $\le -1$. These put together, define the maps $h_i$ by the following formulae. We have $$h_i: C_*(Z_i) \to C_*(Z_i), \quad \quad h_i = \begin{cases} (W^i_{i-3}, G^i_{i-3}, \hat c^i_{i-3})_* + (W^i_{i-3}, G^i_{i-3}, \check c^i_{i-3})_* \quad i \in \{1, -1\} \\
(W^i_{i-3}, G^i_{i-3}, \hat c^i_{i-3})_* \oplus (W^i_{i-3}, G^i_{i-3}, \check c^i_{i-3})_* \quad i = 0.\end{cases}$$

It will be convenient to write $q_i: C_*(Z_i) \to C_*(Z_i)$ for the map defined by the same formula as above, but using the family of metrics $I^i_{S^2 \times S^1}$ in place of $G^i_{i-3}$.

\begin{proposition}
The maps $h_i$ written above are well-defined, and satisfy the relation $$d_{i-3} h_i + f_{i-2} g_i + g_{i-1} f_i + h_i d_i = q_i.$$
\end{proposition}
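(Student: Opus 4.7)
The plan is to obtain the proposition by analyzing the compactification of the moduli spaces of instantons on $(W^i_{i-3}, c, G^i_{i-3})$ with irreducible outer limits, where $c$ runs over $\hat c^i_{i-3}$ and $\check c^i_{i-3}$, and then summing the resulting identities in accordance with the formula defining $h_i$.

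First, I would dispatch well-definedness. Combining Lemmas \ref{lemma:h1}--\ref{lemma:h5}, every broken instanton with irreducible outer limits on either of $(W^i_{i-3}, \hat c^i_{i-3})$ or $(W^i_{i-3}, \check c^i_{i-3})$ supported by $G^i_{i-3}$ has index at least $-2$, with equality achieved only by fully irreducible unbroken instantons lying over the interior of the pentagon $P$. After choosing a small generic perturbation which vanishes on the middle ends and near reducible connections, such instantons are cut out transversely, so the $0$-dimensional moduli space defining $h_i$ is compact and hence finite.

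For the boundary relation, I fix irreducible flat connections $\alpha$ and $\beta$ and consider the $1$-dimensional moduli space $M^+(W^i_{i-3}, c, G^i_{i-3}; \alpha, \beta)_1$, whose mod-$2$ boundary count vanishes. Its ends split into Floer-type broken trajectories at the $Z_i$ and $Z_{i-3}$ ends, which upon summation over $c$ produce the $d_{i-3}h_i + h_i d_i$ terms, together with contributions from the five codimension-$1$ faces of $\partial P$. By Lemma \ref{lemma:h2}, the two $S^3$-broken faces carry no ends. By Lemma \ref{lemma:h3} and standard gluing, the faces broken along $Z_{i-1}$ and $Z_{i-2}$ contribute precisely the composites $g_{i-1} f_i$ and $f_{i-2} g_i$, provided the bundles $\hat c$ and $\check c$ are tracked so as to match those used in the definitions of $f_j$ and $g_j$. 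By Lemma \ref{lemma:h5}, the face broken along $S^2 \times S^1$ contributes exactly $q_i$ through the fiber-product description of that lemma. Finally, when $i \not\equiv 0 \pmod 3$, the face broken along $\mathbb{RP}^3$ supports the instantons of Lemma \ref{lemma:h4}; since the intersections of $\hat c^i_{i-3}$ and $\check c^i_{i-3}$ with the complement $\bar W^i_{i-3}$ of $N$ coincide (as in the concluding argument of Proposition \ref{prop:g}), the two bundles induce identical chain-level maps on this face, which cancel modulo $2$ when the $\hat c$ and $\check c$ counts are added to form $h_i$.

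The main obstacle will be the bookkeeping around the pentagon. At each of the five vertices of $P$ a doubly-broken metric appears as an end of paths along both incident edges, and one must verify that these pairs of corner contributions cancel against each other, as is standard for associahedron-parametrized families of metrics. Equally important is checking that the specific bundle choices $\hat c$ versus $\check c$ on each face line up with the conventions entering the definitions of $f_j$, $g_j$, and $q_i$, so that the edge contributions assemble into exactly $f_{i-2} g_i + g_{i-1} f_i + q_i$ without extraneous terms; the delicate case is the $\mathbb{RP}^3$ face, whose treatment is parallel to (and motivates the sign conventions used in) the proof of Proposition \ref{prop:g}. Once this bookkeeping is carried out, the vanishing of the total boundary count of the $1$-dimensional moduli space yields the stated identity.
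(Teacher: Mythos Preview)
Your proposal is correct and follows essentially the same route as the paper: well-definedness via Lemmas \ref{lemma:h1}--\ref{lemma:h5}, then analysis of the $1$-dimensional moduli space whose ends are the $d_{i-3}h_i + h_i d_i$ terms together with the five pentagon faces, with the $S^3$ faces vanishing, the $Z_{i-1}$ and $Z_{i-2}$ faces giving $g_{i-1}f_i$ and $f_{i-2}g_i$, the $S^2\times S^1$ face giving $q_i$, and the $\mathbb{RP}^3$ face (for $i\not\equiv 0$) cancelling between $\hat c$ and $\check c$. The paper's argument is the same, stated more tersely and without your explicit remark about corner cancellations.
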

\begin{proof}
The combination of Lemmas \ref{lemma:h1}-\ref{lemma:h5} imply that the relevant moduli spaces for the definition of $h_i$ support only irreducible instantons of index $-2$ and no instantons of index $\le -3$, so the Uhlenbeck compactness theorem implies the relevant counts are finite. To investigate the relation, observe that Lemmas \ref{lemma:h1}-\ref{lemma:h5} imply that the interior of the moduli space of index $-1$ instantons is a smooth manifold of dimension $1$, and that these lemmas identify the boundaries of these moduli spaces. The boundary face in $G^i_{i-3}$ which breaks along $Z_{i-1}$ contributes $g_{i-1} f_i$; the boundary face which breaks along $Z_{i-2}$ contributes $f_{i-2} g_i$. The boundary faces which break along $S^3$ contribute the zero map. When $i = 0$, this leaves us only the map $q_0$ coming from the $S^2 \times S^1$ breakings. When $i \in \{1, -1\}$, both families $(W^i_{i-3}, G^i_{i-3}, \hat c^i_{i-3})$ and $(W^i_{i-3}, G^i_{i-3}, \check c^i_{i-3})$ also have a face corresponding to breaking along $\mathbb{RP}^3$. By Lemma \ref{lemma:h4}, these two faces induce the same map, so these terms cancel out upon adding the maps for $\check c^i_{i-3}$ and $\hat c^i_{i-3}$. This leaves only the $q_i$ terms.
\end{proof}

It remains to us to compute the map $q_i$.

\begin{proposition}\label{prop:h}
The map $q_i$ determined above is homotopic to the identity.
\end{proposition}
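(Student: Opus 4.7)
The plan is to leverage the fibered product description from Lemma \ref{lemma:h5} to identify $q_i$ with a count of instantons on the product cobordism $[-1,1] \times Z_i$ constrained by certain incidence conditions, and then to produce a chain homotopy to the identity.

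First, I would analyze the moduli space of index $-2$ reducible instantons on $(N', c')$ for $c' \in \{\hat c_N \cap N', \check c_N \cap N'\}$ with respect to the family $I^i_{S^2 \times S^1}$. Because $N'$ has $S^2 \times S^1$ as a boundary component which is not a rational homology sphere, reducible ASD solutions are not rigid: they form a family parametrized (via restriction) by their image in $\mathfrak X(S^2 \times S^1)\cong [-1,1]$. By a Hodge-theoretic computation analogous to Lemma \ref{lemma:N-reducibles} but accounting for the weighted $L^2$-index on the $S^2 \times S^1$ end, I expect that for each metric in $I^i_{S^2 \times S^1}$ there is a unique index $-2$ reducible, and that as the metric varies the restriction map sweeps out an interval in $\mathfrak X(S^2 \times S^1)$.

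Second, using Lemma \ref{lemma:h5}, $q_i$ is then the algebraic count of pairs $(A_{V_i}, A_{N'})$ matching along their restriction to $\mathfrak X(S^2 \times S^1)$. Since the $A_{N'}$ reducibles parametrize an interval sweeping out a generic path $\gamma \subset \mathfrak X(S^2 \times S^1)$, the map $q_i$ is equivalently computed by counting unbroken irreducible index $-1$ instantons on $V_i$ whose restriction to $M^i_{i-3}$ meets $\gamma$ transversely. Because $V_i = ([-1,1] \times Z_i) \setminus \nu(\{0\} \times K_i)$, such constrained moduli spaces on $V_i$ can be related by a standard gluing/excision argument to moduli spaces on the product cobordism $[-1,1] \times Z_i$ that meet a corresponding incidence condition along $\{0\} \times K_i$.

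Third, since the product cobordism supports only translation-invariant flat instantons over the trivial bundle (and the relevant index $0$ moduli space computes the identity map on $C_*(Z_i)$ by standard Floer theory), the incidence-constrained count gives the identity map up to boundary contributions coming from the endpoints of the arc $\gamma$. The main obstacle will be to organize these boundary contributions into an explicit chain homotopy. Concretely, I would introduce a two-parameter family of metrics, one parameter being the family $I^i_{S^2 \times S^1}$ and the other interpolating between the cut-open manifold $V_i \sqcup N'$ and the filled-in product $[-1,1] \times Z_i$ by varying the length of a collar on $M^i_{i-3}$; counting index $-2$ instantons with respect to this two-parameter family defines a map $H_i: C_*(Z_i) \to C_*(Z_i)$, and examining the boundary faces of the resulting one-parameter family of moduli spaces yields $q_i - \mathrm{id} = d H_i + H_i d$. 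This is the analogue of the argument sketched in \cite[Section 6.3]{CDX} for the $SU(N)$ case, and the key technical input is the transversality and compactness of the index $-1$ instantons appearing at the interior faces of this two-parameter family, together with the analysis of the reducibles established above.
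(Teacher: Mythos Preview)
Your overall strategy is in the right spirit --- analyze the reducible moduli space on $N'$, take the fiber product with instantons on $V_i$, and relate the result to the identity on the product cobordism --- but the execution has two genuine gaps.

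First, the two-parameter family you describe is not well-defined. Gluing $V_i$ back to $N'$ along $M^i_{i-3}$ recovers (a piece of) $W^i_{i-3}$, \emph{not} the product cobordism $[-1,1]\times Z_i$; varying the collar length along $M^i_{i-3}$ therefore cannot interpolate between $V_i \sqcup N'$ and the product. The paper resolves this by an intermediate step you are missing: one first shows that the restriction map $M(N', c', I^i_{S^2\times S^1})^{\textup{red}}_1 \to \mathfrak X(S^2\times S^1)$ has degree~$1$ (it sends the boundary of the interval to the two central connections), and then observes that the same degree-$1$ property holds for the much simpler piece $D^3 \times S^1$. Since the fiber product count only depends on the degree of this restriction map, one may \emph{replace} $N'$ by $D^3 \times S^1$. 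Now $V_i \cup_{S^2\times S^1} (D^3 \times S^1) = [-1,1]\times Z_i$ on the nose, and a single one-parameter family of metrics from the broken metric to the product metric furnishes the homotopy to the identity. Your ``generic path $\gamma$'' language obscures this: it is essential that $\gamma$ has degree~$1$ onto all of $\mathfrak X(S^2\times S^1)$, not merely that it sweeps out some sub-interval.

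Second, for $i \equiv \pm 1 \pmod 3$ you have not addressed the interaction with the $\mathbb{RP}^3$-broken end of $I^i_{S^2\times S^1}$. In this case neither bundle datum alone gives a moduli space covering the full interval: one obtains $M(N', \hat c, I^i_{S^2\times S^1})^{\textup{red}}_1 \cong [-1,0]$ and $M(N', \check c, I^i_{S^2\times S^1})^{\textup{red}}_1 \cong [0,1]$, with the common endpoint $0$ corresponding to the $\mathbb{RP}^3$-broken metric. The two pieces must be pasted along this broken metric --- using that the bundle data $\hat c$ and $\check c$ agree on the component of $N'$ bounded by $\mathbb{RP}^3$ and $S^2\times S^1$ --- to obtain the full degree-$1$ map to $\mathfrak X(S^2\times S^1)$. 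This is precisely why $q_i$ is defined as a \emph{sum} over the two bundle data when $i \ne 0$, and your argument needs to reflect that.
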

\begin{proof}
Again we follow the $N = 2$ case of \cite[Section 6.3]{CDX}. As depicted in Figure \ref{fig3}, the case $i = 0$ is simpler, as both boundary components of $I^i_{S^2 \times S^1}$ correspond to breakings along spheres. In this case \cite[Proposition 5.54]{CDX} identifies the moduli space $M(N', c, I^0_{S^2 \times S^1})^{\textup{red}}_1 \cong [-1, 1]$.  For $i \in \{1, -1\}$, one needs to take a little extra care in dealing with the $\mathbb{RP}^3$-broken metrics, and here it becomes essential that we work with both choices of geometric representative.

One can still identify $$M(N', \hat c^i_{i-3}, I^i_{S^2 \times S^1})^{\textup{red}}_1 \cong [-1, 0], \quad \quad M(N', \check c^i_{i-3}, I^i_{S^2 \times S^1})^{\textup{red}}_1 \cong [0, 1]$$ with $0$ corresponding to the metric broken along $\mathbb{RP}^3$ in both cases. These two metrics coincide. Furthermore, on the part of $N'$ bounded by $\mathbb{RP}^3$ and $S^2 \times S^1$, the two choices of bundle data coincide. It follows that on this piece of $N'$, the instantons associated to $0 \in [-1, 0]$ and $0 \in [0,1]$ coincide, and in particular so do their restrictions to $S^2 \times S^1$. It follows that these two moduli spaces $[-1,0]$ and $[0,1]$ can be pasted together along the broken metric to obtain a single moduli space homeomorphic to $[-1,1]$, equipped with a continuous map to $\mathfrak X(S^2 \times S^1) \cong [-1,1]$.

Now the argument proceeds the same for all $i$. That this map sends $\partial [-1,1]$ to $\partial \mathfrak X(S^2 \times S^1)$ identically --- and thus the map from $[-1,1]$ has degree $1$ in an appropriate sense --- is the $N = 2$ case of \cite[Proposition 5.53]{CDX}. This is established by an explicit computation \cite[Proposition 5.40]{CDX} for the broken metrics in $\partial [-1,1]$, ultimately because the restriction to $\mathfrak X(S^2 \times S^1)$ can be determined by a curvature integral. 

Thus, the map $q_i$ coincides with the map induced by the broken metric on $V_i \sqcup D^3 \times S^1$, as the natural map $M(D^3 \times S^1)^{\text{red}}_1 \to \mathfrak X(S^2 \times S^1)$ also has degree $1$. Choosing a path of metrics from the broken metric on $V_i \sqcup D^3 \times S^1$ to the product metric on $I \times Z_i$ gives rise to a homotopy from the map $q_i$ to the identity map. 
\end{proof}

\begin{proof}[Proof of Proposition \ref{prop:CDX-triangle}]
Using the triangle detection lemma, we have constructed an exact triangle whose vertices are of the expected form. We need to discuss the degree of the relevant maps and verify the given description of $f_1$ and $f_0$ as cobordism maps. The degree computation follows from the index formula (\ref{eqn:index}) and our understanding of the topology of the manifolds $W^i_{i-1}$. The map $f_1$ is by definition $$(W^1_0, \hat c^1_0)_* \oplus (W^1_0, \check c^1_0)_* = (W, \varnothing)_* \oplus (W, c_+)_*,$$ where $c_+$ is the cocore, as stated in Proposition \ref{prop:CDX-triangle}. The map $f_0$ is by definition $$(W^0_{-1}, \check c^1_0)_* \oplus -(W^0_{-1}, \check c^0_{-1})_* = (W', c_-)_* \oplus -(W', c_- \sqcup c_+)_*,$$ with $c_-$ the core and $c_+$ the cocore. Now observe that the induced map $(W', c)_*$ depends only on the relative homology class of $c$. Because $W'$ has intersection form $(-1)$ and $c_- \cap c_+$ is a single positively-oriented point, we see that $c_- = -c_+$ in $H^2(W'; \Bbb Z)$. Thus, $(W', c_- \sqcup c_+)_*$ coincides with $(W', \varnothing)_*$. This completes the proof of Proposition \ref{prop:CDX-triangle}.
\end{proof}

\subsection{Proof of Proposition \ref{prop:pm-one-map}}\label{sec:proof-37}
In the case $Y = Z_0 = S^3$, the chain complex $CI_*(S^3)$ is identically zero, and the maps $f_0$ and $f_1$ are trivial. This implies that the map $g_1: C(Z_1) \to C(Z_{-1})$ is a chain map because the composite $f_0 f_1$ factors through the trivial group, and we have $$d_{-1} g_1 + g_1 d_1 = f_0 f_1 = 0.$$ Similarly, because $f_0 g_2$ and $ g_0 f_1$ are trivial, the relations for $h_1$ and $h_{-1}$ imply that $f_{-1}$ and $g_1$ are chain homotopy inverse maps. In particular, $g_1$ induces an isomorphism on homology.

It suffices to explain why $g_1$ extends to an $IP$-morphism of degree $3$ and level $1/4 - \eta(K)$. The map $g_1$ is obtained by summing over two maps, both of which are obtained by counting instantons on $W^1_{-1}$ with respect to an appropriate family of metrics $G$ and $U(2)$-bundle. These $U(2)$-bundles correspond to geometric representatives $\hat c^1_{-1}, \check c^1_{-1} \subset W^1_{-1}$ for which $c^2 = -1$. Because $W^1_{-1}$ has rational homology sphere ends and $b_1(W) = b^+(W) = 0$, the degree and level of such a map is given by $$D = \dim G - 2c^2 = 1 +2  = 3, \quad \quad L = -c^2/4 - \eta(K) = 1/4 - \eta(K)$$ by a slight generalization of Proposition \ref{prop:IP-morphism} to the case that $W$ is equipped with a family of metrics. 

\section{Alexander polynomial constraints}
In this section, we give a proof of the following result.

\begin{theorem}\label{prop:alexander}
Suppose that $K$ is a knot in $S^3$ and $\pm 2$ is a cosmetic pair.  Then $\Delta_K = 1$.  
\end{theorem}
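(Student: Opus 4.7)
The plan is to use Walker's surgery formula for the Casson--Walker invariant together with Hanselman's genus bound. Since Hanselman has shown that the cosmetic hypothesis forces $g(K) = 2$, the Conway polynomial $\nabla_K$ has degree at most $4$; writing $\nabla_K(z) = 1 + c_2 z^2 + c_4 z^4$, the goal is to show that $c_2 = c_4 = 0$.

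For the first step, I would apply Walker's surgery formula
\[
\lambda_W\bigl(S^3_{p/q}(K)\bigr) \;=\; \lambda_W(L(p,q)) + \frac{q}{2p}\,\Delta_K''(1)
\]
at both $p/q = 2$ and $p/q = -2$. Since $L(2,1)=\mathbb{RP}^3$ admits an orientation-reversing self-diffeomorphism, $\lambda_W(L(\pm 2,1)) = 0$. The hypothesis that $S^3_2(K)\cong S^3_{-2}(K)$ as oriented manifolds gives equal Casson--Walker invariants on the left-hand side; subtracting the two instances of the formula then yields $\Delta_K''(1) = 0$, which is equivalent to $c_2 = 0$. At this point $\nabla_K(z) = 1 + c_4 z^4$, so $\Delta_K(t) = 1 + c_4\bigl(t+t^{-1}-2\bigr)^{2}$, and the remaining task is to force $c_4 = 0$.

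This final step is the main obstacle, since Casson is a type-$2$ finite-type invariant and in itself cannot detect $c_4$. Two natural avenues are available. The first is to apply the Casson (or Casson--Walker) invariant to an auxiliary manifold built from $K$---for instance the double branched cover $\Sigma_2(K)$, whose Casson invariant computes a linear combination of the signature and the Conway coefficient $c_4$ by a formula of Mullins; combined with what the cosmetic hypothesis propagates to covers of $S^3_{\pm 2}(K)$, this should pin down $c_4$. The second avenue is to invoke the finer Heegaard Floer constraints used by Hanselman en route to the genus bound---in particular constraints on $\widehat{HFK}(K,2)$ that, via the graded Euler characteristic identity $\sum_i \chi\bigl(\widehat{HFK}(K,i)\bigr)\,t^i = \Delta_K(t)$, force the coefficient of $t^{\pm 2}$ in $\Delta_K$ (namely $c_4$) to vanish. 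Of these, the Casson-invariant-of-$\Sigma_2(K)$ route is more consistent with the comment in the introduction that the argument proceeds ``using the Casson invariant,'' so that is the path I would pursue first.
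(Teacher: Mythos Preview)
Your first step is correct and matches the paper exactly: the Boyer--Lines constraint $\Delta_K''(1)=0$ (equivalently, your Walker surgery-formula argument) kills $c_2$. Your instinct that the second step should involve the branched double cover is also right, and that is indeed what the paper does. But the precise mechanism is not Mullins' formula for $\lambda(\Sigma_2(K))$: that invariant is fixed independently of the surgery, and so does not by itself interact with the cosmetic hypothesis.

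The missing idea is the following. The unique double cover $X_\pm(K)$ of $S^3_{\pm 2}(K)$ is itself obtained by $\pm 1$-surgery on the lift $\widetilde K\subset \Sigma_2(K)$ (Lemma~\ref{lem:double-cover-surgery} in the paper). Since the cosmetic hypothesis forces $X_+(K)\cong X_-(K)$, the knot $\widetilde K$ \emph{also} admits a pair of cosmetic surgeries, and one may apply Boyer--Lines a second time to obtain $\Delta_{\widetilde K}''(1)=0$. Now the cover formula $\Delta_{\widetilde K}(t)=\Delta_K(t^{1/2})\Delta_K(-t^{1/2})$ expresses the lift's Alexander polynomial in terms of $\Delta_K$; with $\Delta_K(t)=at^2+bt+c+bt^{-1}+at^{-2}$, $b=-4a$, and $c=6a\pm 1$ from the first step, one computes $\Delta_{\widetilde K}''(1)=\pm 4a$, forcing $a=0$ and hence $\Delta_K=1$. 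So the paper's argument is to apply the \emph{same} Casson--Walker surgery constraint twice, once downstairs and once upstairs, with the second application seeing $c_4$ because $\Delta_{\widetilde K}$ is quadratic in the coefficients of $\Delta_K$.
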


Let $K$ be a knot in a homology sphere $Y$.  Let $X_\pm (K)$ denote the unique 2-sheeted cover of $Y_{\pm 2}(K)$. If $\pm 2$ is a cosmetic pair for $K$, then $X_+(K)$ is orientation-preserving diffeomorphic to $X_-(K)$. We will use the following well-known surgery description of $X_\pm(K)$. Writing $\Sigma_2(K)$ for the branched double cover of $K$, the preimage $\widetilde K \subset \Sigma_2(K)$ remains null-homologous, so $\pm 1$ surgery on $\widetilde K$ makes sense. 
\begin{lemma}\label{lem:double-cover-surgery}
The manifold $X_\pm (K)$ is obtained from $\Sigma_2(K)$ by $\pm 1$ surgery on $\widetilde{K}$. 
\end{lemma}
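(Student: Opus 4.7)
The plan is to present both $X_\pm(K)$ and the result of $\pm 1$-surgery on $\widetilde{K} \subset \Sigma_2(K)$ as gluings of a common piece---the unbranched double cover $\widetilde{E}$ of the knot exterior $E = Y \setminus \nu(K)$---with a solid torus, and then identify the attaching slopes. Since $Y$ is a homology sphere, $H_1(E) \cong \Z$ is generated by the meridian $\mu$ and the Seifert longitude $\lambda$ is null-homologous in $E$. The cover $\widetilde{E} \to E$ classified by $\mu \mapsto 1 \in \Z/2$ is precisely the one used to construct $\Sigma_2(K) = \widetilde{E} \cup N(\widetilde{K})$. Over the torus $T = \partial E$ it restricts to a connected double cover $\widetilde{T}$, with $H_1(\widetilde{T})$ generated by a curve $\widetilde{2\mu}$ that covers $2\mu$ and a curve $\widetilde{\lambda}$ that covers $\lambda$; the first of these is the meridian of $\widetilde{K}$ as a knot in $\Sigma_2(K)$.

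Now write $Y_{\pm 2}(K) = E \cup V_\pm$, where $V_\pm$ is a solid torus attached so that its meridian is glued to the curve $\pm 2\mu + \lambda \subset T$. Then $H_1(Y_{\pm 2}(K)) \cong \Z/2$ is generated by $\mu$, so $X_\pm \to Y_{\pm 2}(K)$ is the unique connected double cover (classified by $\mu \mapsto 1$), and by construction its restriction to $E$ is $\widetilde{E}$. The core of $V_\pm$ represents the generator of $H_1(Y_{\pm 2}(K))$, so traversing it exchanges sheets; hence the preimage $\widetilde{V}_\pm$ of $V_\pm$ is a connected double cover of $S^1 \times D^2$, i.e., again a solid torus. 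The meridian $\pm 2\mu + \lambda$ of $V_\pm$ lies in the index-two subgroup $\langle 2\mu, \lambda\rangle \subset \pi_1(T)$, so it lifts to the closed curve on $\widetilde{T}$ representing $\pm\widetilde{2\mu} + \widetilde{\lambda}$, and this is the meridian of $\widetilde{V}_\pm$.

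It therefore suffices to verify that $\widetilde{\lambda}$ is the Seifert longitude of $\widetilde{K}$ in $\Sigma_2(K)$, since then the slope $\pm\widetilde{2\mu} + \widetilde{\lambda}$ is exactly the $\pm 1$-surgery slope on $\widetilde{K}$ (while $\Sigma_2(K)$ itself is recovered as $\infty$-surgery along $\widetilde{2\mu}$). Let $\Sigma \subset Y$ be a Seifert surface for $K$ and set $\Sigma^\circ = \Sigma \cap E$, a surface in $E$ with boundary a copy of $\lambda$. Loops on $\Sigma^\circ$ have vanishing linking number with $K$, so the composition $\pi_1(\Sigma^\circ) \to H_1(E) \to \Z/2$ is trivial and $\Sigma^\circ$ lifts to $\widetilde{E}$. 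Each of the two lifts is a surface in $\widetilde{E}$ whose boundary on $\widetilde{T}$ represents $\widetilde{\lambda} \in H_1(\widetilde{T})$, so $\widetilde{\lambda}$ is null-homologous in $\widetilde{E}$; combined with the intersection computation $\widetilde{2\mu} \cdot \widetilde{\lambda} = 1$ on $\widetilde{T}$, this identifies $\widetilde{\lambda}$ with the Seifert longitude of $\widetilde{K}$. The only genuinely delicate step in the argument is this framing identification; the remaining steps are routine unwindings of the covering-space construction.
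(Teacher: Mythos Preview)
Your proof is correct and follows essentially the same approach as the paper: decompose $Y_{\pm 2}(K)$ into the knot exterior and a solid torus, pass to the double cover piece by piece, and identify the resulting attaching slope on $\widetilde{T}$. The paper is terser, simply asserting that the preimage of $\mu_K$ is $\mu_{\widetilde K}$ and the preimage of $\lambda_K$ is two copies of $\lambda_{\widetilde K}$; you supply the extra verification (via lifting a Seifert surface) that $\widetilde{\lambda}$ really is the Seifert longitude of $\widetilde{K}$, which the paper leaves implicit.
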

\begin{proof}
Since $Y_{\pm 2}(K)$ is obtained from Dehn filling the exterior of $K$, the double cover is described by a suitable Dehn filling of a double cover of $Y - K$.  Since $H_1(Y - K)$ surjects onto $H_1(Y_{\pm 2}(K))$, we are Dehn filling the non-trivial double cover.  The slope is simply the lift of the slope downstairs, which is $\pm 2\mu_K + \lambda_K$.  The preimage of $\mu_K$ is $\mu_{\widetilde{K}}$ and the preimage of $\lambda_K$ is two copies of $\lambda_{\widetilde{K}}$.  Hence, the preimage of $\pm 2 \mu_K + \lambda_K$ is two copies of $\pm \mu_{\widetilde{K}} + \lambda_{\widetilde{K}}$ and the result follows.
\end{proof}

\begin{proof}[Proof of Theorem~\ref{prop:alexander}]
Let $K$ be a non-trivial knot in $S^3$ for which $\pm 2$ is a cosmetic pair. We have $g(K) = 2$ by \cite[Theorem 2(ii)]{Hanselman}, so the Alexander polynomial takes the form
\[
\Delta_K(t) = at^2 + bt + c + bt^{-1} + at^{-2},  
\]
with $a, b$ possibly zero. We will also use that the preimage $\widetilde K$ has Alexander polynomial
\[
\Delta_{\widetilde{K}}(t) = \Delta_K(t^{1/2})\Delta_K(-t^{1/2}) = a^2 t^2 + (2ac - b^2) t + (2a^2 - 2b^2 + c^2) + (2ac - b^2) t^{-1} + a^2 t^{-2};
\] 
see e.g. \cite[Corollary 4.2]{HKL}. Now \cite[p182, Consequence 2]{BoyerLines} states that whenever a knot $J \subset Y$ admits cosmetic surgeries, its Alexander polynomial has $\Delta_J''(1) = 0$. Applying this to $K$ gives $8a + 2b = 0$ and so $b = -4a$. Since $|\Delta_K(1)| = 1$, we also have $c =6a \pm 1$.

Next, we study the Casson--Walker invariants of $X_\pm(K)$. Appealing to Lemma~\ref{lem:double-cover-surgery} and \cite[p182, Consequence 2]{BoyerLines} once more, we see that $\Delta''_{\widetilde{K}}(1) = 0$ as well. As 
\[
\Delta''_{\widetilde K}(1) = 2a^2 + 2(2ac - b^2) + 6a^2 = 8a^2 + 4a(6a \pm 1) - 2(-4a)^2 = \pm 4a,
\]
so combining these means $a = 0$ and hence $\Delta_K = 1$.  
\end{proof}

\bibliography{bib.bib}
\bibliographystyle{alpha}

\end{document}